\colorlet{darkgreen}{green!50!black}
\newcommand*{\de}{\, \mathrm{d}}
\newcommand*{\dd}{\mathrm{d}}
\newcommand*{\D}{\mathrm{d}}
\newcommand*{\ges}{\geqslant}
\newcommand*{\Oh}{\mathrm{O}}
\newcommand*{\mcG}{\mathcal{G}}
\newcommand*{\mcH}{\mathcal{H}}
\newcommand*{\mbC}{\mathbb{C}}
\newcommand{\PP}{{\mathbb P}}
\newcommand{\EE}{{\mathbb E}}
\definecolor{darkgreen}{RGB}{0,100,0}
\newtheorem{theorem}{Theorem}
\newtheorem{lemma}[theorem]{Lemma}
\newtheorem{proposition}[theorem]{Proposition}
\newtheorem{remark}[theorem]{Remark}
\newcommand{\rems}[1]{\textcolor{black}{#1}}
\newcommand{\remst}[1]{}
\newcommand{\remstf}[1]{}
\author{Philip A. Ernst \and Sandro Franceschi \and Dongzhou Huang}
\address{Department of Statistics
        Rice University, 6100 Main Street, Houston TX, 77005, USA
        } \email{philip.ernst@rice.edu}
        \address{Laboratoire de Mathématiques d’Orsay,
        Université Paris Sud, Bâtiment 307, 91405 Orsay, France
        }
         \address{Department CITI, CNRS UMR 5157 SAMOVAR, T\'el\'ecom SudParis, Institut Polytechnique de Paris, Palaiseau, France
        } \email{sandro.franceschi@universite-paris-saclay.fr}
          \address{Department of Statistics
        Rice University, 6100 Main Street, Houston TX, 77005, USA
        } \email{dh37@rice.edu}
\begin{document}
\title[Escape and absorption probabilities for Brownian motion in a quadrant]{Escape and absorption probabilities for obliquely reflected Brownian motion in a quadrant}

\begin{abstract}
We consider an obliquely reflected Brownian motion  $Z$  with positive drift in a quadrant stopped at time $T$, where $T:=\inf \{ t>0 : Z(t)=(0,0) \}$ is the first hitting time of the origin. Such a process can be defined even in the non-standard case where the reflection matrix is not completely-$\mathcal{S}$. We show that in this case the process has two possible behaviors: either it tends to infinity or it hits the corner (origin) in a finite time. Given an arbitrary starting point $(u,v)$ in the quadrant, we consider the escape (resp. absorption) probabilities $\mathbb{P}_{(u,v)}[T=\infty]$ (resp. $\mathbb{P}_{(u,v)}[T<\infty]$). We establish the partial differential equations and the oblique Neumann boundary conditions which characterize the escape probability and provide a functional equation satisfied by the Laplace transform of the escape probability. We then give asymptotics for the absorption probability in the simpler case where the starting point in the quadrant is $(u,0)$. We exhibit a remarkable geometric condition on the parameters which characterizes the case where the absorption probability has a product form and is exponential. We call this new criterion the \textit{dual skew symmetry} condition due to its natural connection with the skew symmetry condition for the stationary distribution. We then obtain an explicit integral expression for the Laplace transform of the escape probability. We conclude by presenting exact asymptotics for the escape probability at the origin. 
\end{abstract}

\keywords{Escape probability; Absorption probability; Obliquely reflected Brownian motion in a quadrant; Functional equation; Carleman boundary value problem; 
Laplace transform; Neumann's condition; Asymptotics}

\maketitle


\section{Introduction}


\subsection{Model and goal}

Let $Z(t)=(Z_1(t),Z_2(t))$ be a reflected Brownian motion (RBM) in the quadrant, starting from the point $(u,v)$, with positive drift $\mu=(\mu_1,\mu_2) $; that is,
$
\mu_1>0, \mu_2 >0.
$ 
The covariance matrix is \rems{ 
$\begin{psmallmatrix}1&\rho\\ \rho&1\end{psmallmatrix}$
with $|\rho|<1$}
and the reflection matrix is 
\rems{$\begin{psmallmatrix}1&-r_2\\ -r_1&1\end{psmallmatrix}$}. 
We further assume that 
\begin{equation}
r_1>0, \ r_2>0 \text{ and } 1 \leqslant r_1r_2.
\label{eq:conditionRmatrix}
\end{equation}
See Figure~\ref{fig:rebondrevice} for a representation of the parameters. We define this reflected process up to the first hitting time $T$ of the corner, defined as
$$
T:= \inf \{ t>0 : Z(t)=0 \}.
$$
For $t\leqslant T$, this process may be written as
\begin{equation} \label{ourprocess}
\begin{cases}
Z_1(t):=u+W_1(t)+\mu_1 t+ l_1(t) -r_2 l_2(t),
\\
Z_2(t):=v+W_2(t)+\mu_2 t - r_1 l_1(t) +l_2(t),
\end{cases}
\end{equation}
 where
$l_1(t)$ (resp. $l_2(t)$) is a local time on the vertical (resp. horizontal) axis and is a continuous non-decreasing process \rems{starting from $0$} which increases only
when $Z_1 (t) =0$ (resp. $Z_2 (t) =0$). Under condition $\eqref{eq:conditionRmatrix}$, \rems{when $t>T$, that is after that the process $Z$ hits the corner, the process is no longer defined by \eqref{ourprocess} for reasons of convexity. In lieu, for $t>T$, we define $Z(t)=(0,0)$ and say that the process is absorbed when $T<\infty$}. Further details on the existence and uniqueness of this process will be given in Section~\ref{subsec:definitionprocess}.

The objective of the present paper is to study the \rems{probability of escape} to infinity for a process starting from $(u,v)$. We denote this probability as $$\mathbb{P}_{(u,v)}[T=\infty].$$
The corresponding absorption probability at the origin is
$\mathbb{P}_{(u,v)}[T<\infty]=1 -\mathbb{P}_{(u,v)}[T=\infty].$

\begin{figure}[hbtp]
\centering
\includegraphics[trim={0cm 0.6cm 0cm 0.8cm}, clip,scale=0.5]{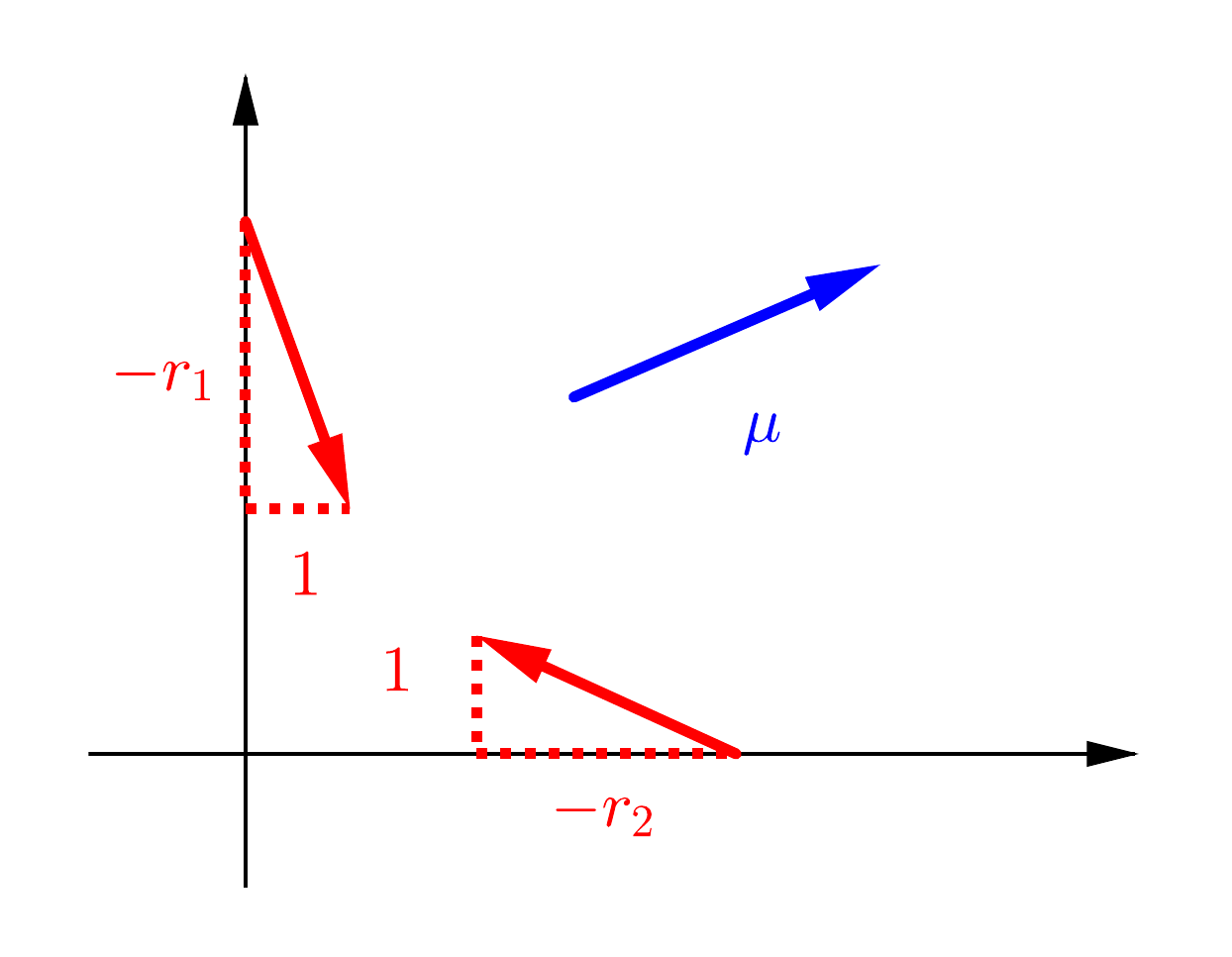}
\caption{Reflection vectors and drift.}
\label{fig:rebondrevice}
\end{figure}



\indent Since its introduction in the eighties by Harrison, Reiman, Varadhan and Williams \cite{harrison_reiman_1981,harrison_reiman,williams_recurrence_1985,
varadhan_brownian_1985,williams_1985_reflected}\rems{,}
reflected Brownian motion in the quarter plane has received significant attention from
probabilists. Recurrence and transience of obliquely \rems{reflected} Brownian motion were examined in \cite{hobson_rogers,williams_recurrence_1985}, and the process has also been considered in planar domains \cite{harrison_landay_shepp_stationary_1985, harrison_shepp_tandem_84} as well as in general dimensions in orthants \cite{harrison_reiman,taylor_existence_1993,williams_survey}. The stationary distribution of obliquely \rems{reflected} Brownian motion has been studied in \cite{dai_reflecting_2011,dieker_reflected_2009,
franceschi_2019,kang_2014,dupuis_2014_timereverse} and its Green's functions have been studied in \cite{franceschi_green_2020}. \rems{The roughness of its paths has been studied in \cite{LRZ}.} Obliquely \rems{reflected} Brownian motion  has played an important role in applications concerning heavy traffic approximations for open queueing networks (\cite{harrison_1978,reiman_84_open}). It has also been utilized in queueing models as diffusion approximations for tandem queues (\cite{lieshout_tandem_2007,lieshout_asymptotic_2008, miyazawa_tail_2009}). 

\rems{There are several possible interpretations in insurance risk of models involving reflected Lévy processes in a quadrant~(\cite{albrecher_azcue_muler_2017,BBRW,Ivanovs-Boxma-bivariat-2015}). For example, suppose there are two funds, each of whose free surplus is modelled by a Cram\'er-Lundberg process, and which have the following agreement: a deficit in one fund is instantly covered by the other fund, with ruin occuring when neither company can cover the deficit of the other. In the case of our problem, the absorption probability could be interpreted to be the probability of ruin; the escape probability may be interpreted as the probability of survival and infinite capital expansion. The aforementioned process also arises in the study of queueing models as diffusion approximations
for some Lévy tandem queues~(\cite{boxma2013,fomichov2020probability,Whitt2002}).}

\indent Previous works (\cite{baccelli_analysis_1987,
ernst_2020,foddy_analysis_1984,franceschi_2019,fomichov2020probability}) have adapted an analytic method initially developed for random walks by \citet{fayolle_two_1979} and \citet{malysev_analytic_1972} for studying obliquely reflected Brownian motion. \rems{Above all we mention \cite{fomichov2020probability} which focus on a non-standard regime where the reflected process escapes to
infinity along one of the axes. The techniques and the results employed to solve this problem are very similar to this article.} This method is based on the boundary value problem theory documented by the books of \citet{fayolle_random_2017} and \citet{cohen_boxma}. The present article is in part inspired by this analytic approach.\\

\subsection{Definition of the process given in (\ref{ourprocess})}
\label{subsec:definitionprocess}

Brownian motion in a quadrant with oblique reflection is usually defined as a process which behaves as a standard Brownian motion in \rems{the interior} of the quadrant,\remst{ The process} reflects instantaneously on the edges with constant direction and the amount of time spent at the origin has Lebesgue measure zero (\citet{varadhan_brownian_1985}). Such a process is defined as a solution of a submartingale problem \cite{varadhan_brownian_1985}. An interesting case arises when the process is a semimartingale reflecting Brownian motion (SRBM).
\citet{reiman_boundary_1988} showed that a necessary condition for the process to be \rems{an} SRBM is for the reflection matrix to be completly-$\mathcal{S}$\footnote{
A square matrix $R$ is said to be completly-$\mathcal{S}$ if for each principal sub-matrix $\widetilde{R}$ there exists $\widetilde{x} \geqslant 0$ such that $\widetilde{R} \widetilde{x} >0$.}.
\citet{taylor_existence_1993} showed that this condition was also sufficient for the existence of an SRBM, which is unique in law.

Due to condition ~\eqref{eq:conditionRmatrix}, the reflection matrix of the process in \eqref{ourprocess} is not completely-$\mathcal{S}$. The process indeed is not a standard SRBM as it may be trapped at the origin.
Nonetheless, it is possible to define this absorbed process up \rems{to} the stopping time $T$. The existence and uniqueness as a solution of a submartingale problem \remst{is given }for the absorbed process is given in \cite[\S 2.1, Thm 2.1]{varadhan_brownian_1985}. Further, in \citet[\S 4.2 and \S 4.3]{taylor_existence_1993}, the existence and uniqueness of an SRBM absorbed at the origin are proven without assuming that the reflection matrix is completely-$\mathcal{S}$.

\subsection{From the quadrant to the wedge}

 \citet[Appendix]{franceschi_2019} recently showed that studying reflected Brownian motion in a quadrant is equivalent to studying reflected Brownian motion in a wedge with angle $\beta$, with identity covariance matrix, with two reflection angles $\delta$ and $\epsilon$, and with drift angle $\theta$ (see Figure~\ref{fig:wedgeangle}).
The angles $\delta, \epsilon, \beta$ and $\theta$ (when the drift is nonzero) are in $(0,\pi)$ and are defined by
\begin{equation}
\tan\delta= \frac{\sin \beta}{-r_2 +\cos\beta},
\quad
\tan\epsilon= \frac{\sin \beta}{-r_1 +\cos\beta},
\quad
\tan\theta= \frac{\sin \beta}{\mu_1/\mu_2 +\cos\beta},
\quad
\cos\beta= -\rho.
\label{eq:defangles}
\end{equation}
\rems{The angles are equal to $\pi/2$ when the denominators of the tangents are equal to $0$.}

\vspace{-0.3cm}
\begin{figure}[hbtp]
\centering
\includegraphics[trim={0cm 0.5cm 0cm 0.7cm}, clip,scale=0.8]{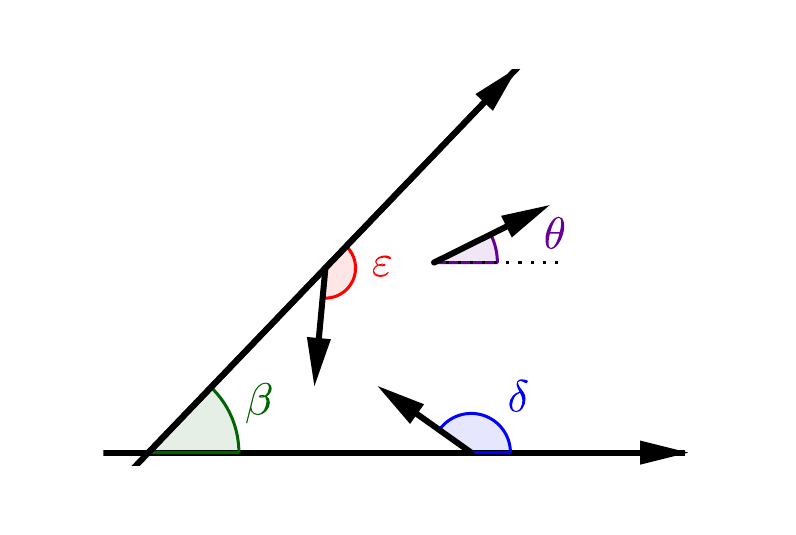}
\caption{Reflected Brownian motion \remst{with zero drift }in a wedge of angle $\beta$, reflection angles $\delta$ and $\epsilon$, and drift angle $\theta$.}
\label{fig:wedgeangle}
\end{figure}

\noindent Finally, we denote $\alpha$, now a standard quantity in the SRBM literature, to be 
\begin{equation}
\alpha:= \frac{\delta+\epsilon-\pi}{\beta}. 
\label{eq:alpha}
\end{equation}
Condition \eqref{eq:conditionRmatrix} is equivalent to $\delta+\epsilon-\beta\geqslant \pi$ (or equivalently $\alpha\geqslant1$) and $\delta>\beta$, $\epsilon>\beta$. 

\subsection{The case of zero drift}

The case of zero drift $\mu=0$ was treated by \citet{varadhan_brownian_1985}.
In this case the absorption probability does not depend on the starting point. We have from \cite[Thm 2.2]{varadhan_brownian_1985}
$$
\mathbb{P}[T<\infty]=
\begin{cases}
1 & \text{if } \alpha>0,
\\ 0 & \text{if } \alpha \leqslant 0 .
\end{cases}
$$
If $\alpha \leqslant 0$, the corner is not reached. If $0< \alpha <2$, the corner is reached but the amount of time spent by the process in the corner is Lebesgue measure zero. If $\alpha \geqslant 2$, the process reaches the corner and remains there. \rems{The previous properties are valid in the case of zero drift.} 
Under condition \eqref{eq:conditionRmatrix}, the case of positive drift poses a \rems{new challenge}, as $0<\mathbb{P}_{(u,v)}[T<\infty]<1.$ \rems{Remark that} condition ~\eqref{eq:conditionRmatrix} is equivalent to $\alpha \geqslant 1$.
\subsection{Escape probability and stationary distribution of the dual process}
\label{subsec:dualprocess}

\citet{harrison_1978} and \citet{foddy_analysis_1984}
showed that the hitting time on one of the axes is intrinsically connected to the stationary distribution of a certain dual process. As the present article was nearing completion, it came to our attention that \citet{harrison_2020} has extended the results in his earlier work (\cite{harrison_1978}) by introducing a dual RBM in the quadrant with drift $-\mu$ and reflection matrix $$\left(\begin{array}{cc}
r_2 & -1 \\ 
-1 & r_1
\end{array}\right)$$ 
when $1<r_1r_2$.
This \rems{configuration of parameters} is depicted in Figure~\ref{fig:dual} below. This dual process has an explicit connection with the study of the escape probability. In particular, \rems{\citet[Cor. 2]{harrison_2020}} states that
$$
\mathbb{P}_{(u,v)}[T=\infty] = \pi (\mathcal{S}(u,v))\rems{,}
$$
where $\pi$ is the stationary distribution of the dual process and $\mathcal{S}(u,v):=\{(u-r_2 z_1+z_2, v+z_1-r_1z_2)\in\mathbb{R}_+^2 : (z_1,z_2)\in\mathbb{R}_+^2 \}$ is a trapezoid as pictured in Figure~\ref{fig:dual}.
\begin{figure}[hbtp]
\centering
\includegraphics[trim={0cm 0.6cm 0cm 0.8cm}, clip,scale=0.6]{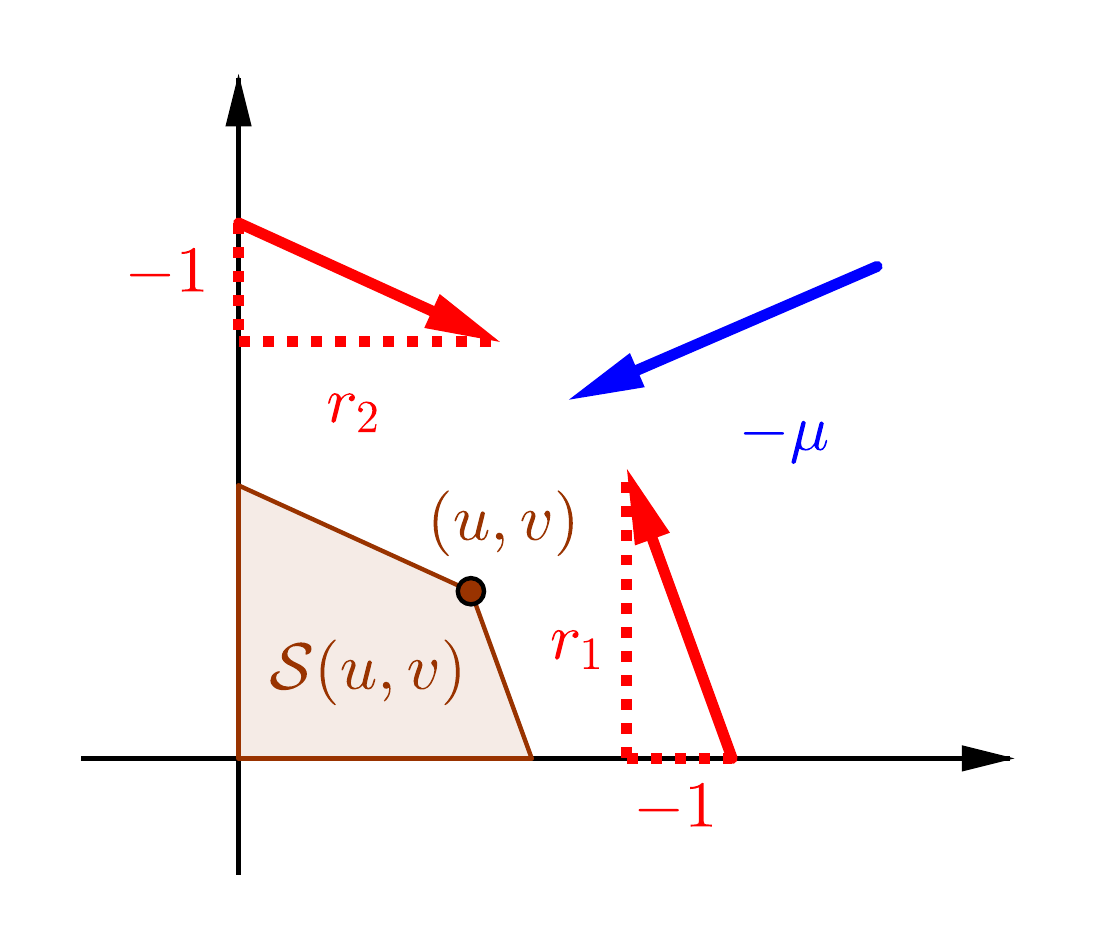}
\caption{Dual process parameters and trapezoid $\mathcal{S}(u,v)$ in brown.}
\label{fig:dual}
\end{figure}

\subsection{Plan}

\indent The remainder of this paper is organized as follow\rems{s}. In Section~\ref{sec:behaviour} we explore some general properties of the process of interest given in (\ref{ourprocess}). This section's key result is Theorem \ref{thm:Ztoinfty}, which states that the process has only two possible behaviors: either $T<\infty$, which means that the process is absorbed at the origin in finite time, or $T=\infty$, in which case the process escapes to infinity, namely $Z(t)\to\infty$ when $t\to\infty$\rems{.}
In Section~\ref{sec:functionalequation} we present Proposition~\ref{prop:PDE}, which gives a partial differential equation characterizing the escape probability. Later in this section, we give Proposition~\ref{prop:functionaleq}, which provides a functional equation satisfied by the Laplace transform of the escape probability. In Section~\ref{sec:kernelasympt}, we study the kernel of this functional equation and obtain asymptotic\remst{s} results for the absorption probability in the simpler case where the starting point is $(u,0)$ (Proposition~\ref{cor:asymptBrown}). In Section~\ref{sec:productform} we find a geometric condition which characterize\rems{s} the cases where the absorption probability has a product form and is exponential (Theorem~\ref{thm:productform}). Such a result recall\rems{s} the famous skew symmetry condition studied a lot for invariant measures (\cite{harrison_reiman_1981,harrison_multidimensional_1987}). In Section~\ref{sec:integralexpression} we establish a boundary value problem (BVP) satisfied by the Laplace transform \rems{of the escape probability} (Proposition~\ref{prop:BVPBrownian}) and conclude with Theorem~\ref{thm:explicitF1Brown}, which gives an explicit integral formula \rems{of this transform}\remst{for the Laplace transform of the escape probability}. In Section~\ref{sec:asympt0} we obtain exact asymptotics for the escape probability at the origin. \\





\noindent \textbf{In memory of Larry Shepp\rems{.}} We dedicate this article in memory of our colleague, mentor, and friend, Professor Larry Shepp. Professor Shepp indelibly contributed to many areas of applied probability, and one of the areas that interested him most concerned RBM in a quadrant as well \rems{as} in a strip (\cite{harrison_landay_shepp_stationary_1985,harrison_shepp_tandem_84}).


\section{General properties \rems{of process $Z$}}
\label{sec:behaviour}

In this section we investigate a few key properties of the process given in (\ref{ourprocess}). We prove three key results. The first \rems{one} is that if the starting point tends to infinity, then the probability that the process does not hit the origin tends towards $1$  (Theorem~\ref{thm:uvtendtoinfty}). The second \rems{one} is that when the starting point tends to the origin, the probability that the process hits the origin in finite time tends towards $1$ (Theorem~\ref{thm:lim0}). The third key result is that the process has only two possible behaviors: either $T<\infty$, which means that the process is absorbed at the origin in finite time, or $T=\infty$, in which case the process escapes to infinity, namely $Z(t)\to\infty$ when $t\to\infty$ (Theorem \ref{thm:Ztoinfty}).


\subsection{{\rems{Limits of the hitting probability}}}
\label{sec21}
Our first key results of the section (Theorems \ref{thm:uvtendtoinfty} and \ref{thm:lim0}) concern the probability of the process hitting the origin. \rems{We wish to show that $\lim_{\rems{\|(u,v)\|} \rightarrow \infty} \, \PP_{(u,v)} [T = \infty ] =1$}.  We shall prove this with the aid of Lemma \ref{lemma:crucial1} and Proposition \ref{proposition:hittingtime}.\\
\indent For ease of notation, let us define
$\tau_{1}^{\xi} := \inf \{ t: Z_{1}(t \wedge T) \leq \xi \}$ and
$\tau_{2}^{\xi} := \inf \{ t: Z_{2}(t \wedge T) \leq \xi \}$.
Further, let $X_{1}(t)$:= $u + W_{1}(t) + \mu_1 t$ and let $X_{2}(t)$:=$v + W_{2}(t) + \mu_2 t$.

\rems{Suppose $Z(t)$ is a one-dimensional reflected Brownian motion. The analysis of $Z(t)$ is converted to that of one-dimensional Brownian motion with a drift by the Skorokhod map. However, in the case of obliquely reflected Brownian motion in a quadrant, this method does not generally work due to the presence of $l_1(t)$ and $l_2(t)$. However, on the event $\{\tau_{1}^{\xi} = \infty\}$, note that $l_{1}(t)=0$, the previously reflected Brownian motion becomes an obliquely reflected Brownian motion in a half-plane. This allows the one-dimensional techniques to be applied in our case. This motivates us to consider the event $\{\tau_{1}^{\xi}\}$ below.}
\begin{lemma} \label{lemma:crucial1}
For $u> \xi >0$, we have
\begin{equation}
\PP_{(u,v)} [\tau_{1}^{\xi} = \infty]
= \PP_{(u,v)} \left[X_{1}(t \wedge T) - r_2 \, \sup_{0 \leq s \leq t\wedge T} (-X_{2}(s))^{+} > \xi \text{ for every $t \geq 0$}\right], \label{eq:tau=infty}
\end{equation}
where $x^{+}$ equals $x$ if $x>0$ and is $0$ otherwise. Hence,
\begin{equation}
\PP_{(u,v)} [\tau_{1}^{\xi} = \infty]
\geq \PP_{(u,v)} \left[X_{1}(t) - r_2 \, \sup_{0 \leq s \leq t} (-X_{2}(s))^{+} > \xi \text{ for every $t \geq 0$}\right].   \label{eq:estimateoftau=infty}
\end{equation}
A symmetrical result holds for $v> \xi >0$ and
$
\PP_{(u,v)} [\tau_{2}^{\xi} = \infty]$.
\end{lemma}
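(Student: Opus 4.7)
My approach is to exploit the hint given in the remark preceding the lemma: on any time interval on which $Z_1$ stays strictly positive the local time $l_1$ cannot grow, and so the two-dimensional obliquely reflected system reduces on that interval to a standard one-dimensional Skorokhod reflection of $Z_2$ at zero. On such an interval the classical Skorokhod formula gives $l_2(t)=\sup_{s\leq t}(-X_2(s))^{+}$, so that $Z_1(t)=X_1(t)-r_2\sup_{s\leq t}(-X_2(s))^{+}$.

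The first step is the preliminary observation that $\tau_1^{\xi}\leq T$ always. Indeed, if $T<\infty$ then $Z_1(T)=0<\xi$ and $Z_1(0)=u>\xi$, so by continuity $Z_1$ crosses $\xi$ strictly before $T$; if $T=\infty$ the statement is trivial. In particular $\{\tau_1^{\xi}=\infty\}\subset\{T=\infty\}$, so on $\{\tau_1^{\xi}=\infty\}$ one has $t\wedge T=t$ for every $t\geq 0$, the local time $l_1$ vanishes identically, and the Skorokhod reduction described above holds on the whole half-line. This immediately gives the inclusion ``LHS event $\subset$ RHS event'' in \eqref{eq:tau=infty}.

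For the reverse inclusion, suppose the RHS event in \eqref{eq:tau=infty} holds but $\tau_1^{\xi}<\infty$. Since $Z_1\geq\xi>0$ on $[0,\tau_1^{\xi}\wedge T]$, the local time $l_1$ vanishes on that interval, so that $Z_2$ solves the one-dimensional Skorokhod equation there, and $Z_1(t)=X_1(t)-r_2\sup_{s\leq t}(-X_2(s))^{+}$ for $t\leq \tau_1^{\xi}\wedge T$. Evaluating at $t=\tau_1^{\xi}\leq T$ and using continuity $Z_1(\tau_1^{\xi})=\xi$ yields $X_1(\tau_1^{\xi}\wedge T)-r_2\sup_{s\leq \tau_1^{\xi}\wedge T}(-X_2(s))^{+}=\xi$, contradicting the strict inequality $>\xi$ required by the RHS event at $t=\tau_1^{\xi}$. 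Thus $\tau_1^{\xi}=\infty$, completing \eqref{eq:tau=infty}. The estimate \eqref{eq:estimateoftau=infty} is then immediate: the event with $t$ unstopped (the RHS of \eqref{eq:estimateoftau=infty}) is pathwise contained in the same event with $t$ replaced by $t\wedge T$, so its probability is smaller.

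The only real subtlety is the random-time bookkeeping between $T$ and $\tau_1^{\xi}$: one must justify that $l_1$ vanishes up to and including $\tau_1^{\xi}\wedge T$, and that the one-dimensional Skorokhod identity may be applied pathwise on this random interval. Both facts follow from $l_1$ being continuous, non-decreasing, and starting at $0$, so that a non-visit of $\{Z_1=0\}$ forces $l_1\equiv 0$. The symmetric statement for $\tau_2^{\xi}$ is obtained by swapping the roles of the two coordinates.
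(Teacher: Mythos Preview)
Your argument is correct and in fact cleaner than the paper's. The forward inclusion $\{\tau_1^{\xi}=\infty\}\subset\{\text{RHS}\}$ is handled identically in both proofs via $l_1\equiv 0$ and the one-dimensional Skorokhod formula. The difference lies in the reverse inclusion. The paper decomposes the RHS event according to whether $l_1(T)=0$ or $l_1(T)>0$, and disposes of the second case by an auxiliary stopping-time construction: it bounds $l_2(t\wedge T)\leq r_1 l_1(t\wedge T)+\sup_{s\leq t\wedge T}(-X_2(s))^{+}$ via the Skorokhod map, introduces $\eta:=\inf\{t:l_1(t\wedge T)\geq \xi/(2r_1r_2)\}$, and shows $Z_1(\cdot\wedge\eta\wedge T)>\xi/2$, contradicting growth of $l_1$. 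You instead make the elementary observation that $\tau_1^{\xi}<T$ whenever $\tau_1^{\xi}<\infty$ (since $Z_1(0)=u>\xi$ and $Z_1(T)=0$ force a level-$\xi$ crossing strictly before $T$ by continuity), which guarantees $l_1\equiv 0$ on $[0,\tau_1^{\xi}]$ and hence the Skorokhod identity there; evaluating at $\tau_1^{\xi}$ then gives $Z_1(\tau_1^{\xi})=\xi$, an immediate contradiction with the strict inequality in the RHS event. Your route avoids the auxiliary level $\eta$ and the $l_2$-comparison entirely; the paper's route, on the other hand, does not rely on locating $\tau_1^{\xi}$ relative to $T$. Both are valid, but yours is the more economical argument.
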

\begin{proof}
On the event $\{ \tau_{1}^{\xi} = \infty \}$,  for every $t \geq 0$, we have $l_{1}(t) =0$, $\PP_{(u,v)}$-a.s. Then
\begin{eqnarray*}
 && Z_{1} (t \wedge T) = X_{1} (t \wedge T) - r_2 \, l_{2}(t \wedge T), \\
 && Z_{2} (t \wedge T) = X_{2} (t \wedge T) + l_{2}(t \wedge T).
\end{eqnarray*}
Note that $l_{2}(t \wedge T)$ increases only when $Z_{2} (t \wedge T) =0$. \rems{By uniqueness of the Skorokhod map (see e.g. \cite{Pilipenko} and references therein)}
\begin{equation*}
l_{2} (t \wedge T) = \sup_{0 \leq s \leq t} (- X_{2}(s \wedge T))^{+}
= \sup_{0 \leq s \leq t \wedge T} (- X_{2}(s ))^{+}.
\end{equation*}
Thus
\begin{equation*}
 Z_{1} (t \wedge T) = X_{1}(t \wedge T) - r_2 \, \sup_{0\leq s \leq t\wedge T} (-X_{2}(s))^{+}.
\end{equation*}
We may then write
\begin{eqnarray}
&&\{ \tau_{1}^{\xi} = \infty \} 
= \{ Z_{1}(t \wedge T) > \xi \text{ for every $t \geq 0$} \} \notag \\
&=& \{ Z_{1}(t \wedge T) > \xi \text{ for every $t \geq 0$ and $l_1(T)=0$}  \}  \notag \\
&=& \left\{ X_{1}(t \wedge T) - r_2 \, \sup_{0\leq s \leq t\wedge T} (-X_{2}(s))^{+} > \xi \text{ for every $t \geq 0$ and $l_1(T)=0$} \right \}, \label{eq:1.1}
\end{eqnarray}
 $\PP_{(u,v)}$-a.s. We now wish to show that
\begin{equation}
\PP_{(u,v)} \left[ X_{1}(t \wedge T) - r_2 \, \sup_{0\leq s \leq t\wedge T} (-X_{2}(s))^{+} > \xi \text{ for every $t \geq 0$ and $l_1(T)>0$} \right] =0. \label{eq:1.2}
\end{equation}
Note that there is a set $N$ such that $\PP_{(u,v)} (N) =1 $ and for every $\omega \in N$, we have
\begin{eqnarray}
&& Z_{1} (t \wedge T) = X_{1}(t \wedge T) + l_{1}(t \wedge T) - r_2 \, l_{2}(t \wedge T) \geq 0, \label{eq:1.3} \\
&& Z_{2} (t \wedge T) = X_{2}(t \wedge T) - r_1 \, l_{1}(t \wedge T) + l_{2}(t \wedge T) \geq 0, \label{eq:1.4} \\
&& \text{$l_{1}(t \wedge T)$ increases only when $Z_{1}(t \wedge T)=0$}, \label{eq:1.5} \\
&& \text{$l_{2}(t \wedge T)$ increases only when $Z_{2}(t \wedge T)=0$}. \label{eq:1.6}
\end{eqnarray}
Let \remst{us }$\omega \in N$. We claim that the following statements 
\begin{enumerate}[a)]
\item $X_{1}(t \wedge T) - r_2 \, \sup_{0\leq s \leq t\wedge T} (-X_{2}(s))^{+} > \xi$ for every $t \geq 0$;
\item $l_{1}(T) >0 $,
\end{enumerate}
cannot hold simultaneously. The proof is by contradiction. For sake of contradiction, assume that statements a) and b) hold simultaneously. By \eqref{eq:1.4}, \eqref{eq:1.6}, and the uniqueness of Skorokhod map, we have
\begin{eqnarray*}
&&l_2 (t \wedge T) = \sup_{0 \leq s \leq t} (r_1 \, l_{1} (s \wedge T) - X_{2}(s \wedge T))^{+} \\
&\leq& \sup_{0 \leq s \leq t} (r_1 \, l_{1}(s\wedge T))^{+} + \sup_{0 \leq s \leq t} (- X_{2}(s \wedge T))^{+} \\
&=& r_1 \, l_{1}(t \wedge T) +  \sup_{0 \leq s \leq t \wedge T} (- X_{2}(s ))^{+}.
\end{eqnarray*}
Let $\eta := \inf \{ t : l_{1}(t \wedge T) \geq \xi/(2 r_1 r_2) \}$. Then for every $t \geq 0$,

\rems{\begin{eqnarray*}
&&Z_{1} (t \wedge \eta \wedge T) = X_1(t \wedge \eta \wedge T) + l_1(t \wedge \eta \wedge T) - r_2 l_2(t \wedge \eta \wedge T) \\
&\geq& X_{1}(t \wedge \eta \wedge T) - r_2 \, l_{2} (t \wedge \eta \wedge T) \\
&\geq& X_{1}(t \wedge \eta \wedge T) - r_2 \, \sup_{0 \leq s \leq t \wedge \eta \wedge T} (- X_{2}(s ))^{+}
 - r_1 r_2 \, l_{1}(t \wedge \eta \wedge T)  \\
&>& \xi - \xi/2 = \xi/2,
\end{eqnarray*}}
where in the last inequality we have invoked statement a). Since $l_1(t \wedge T)$ increases only when $Z_{1}(t \wedge T) =0$, we have
\begin{equation*}
l_{1}(t\wedge \eta \wedge T)=0 \text{ for every $t\geq 0$},
\end{equation*}
which contradicts statement b) and the definition of $\eta$. By contradiction, \eqref{eq:1.2} holds. Combining \eqref{eq:1.1} and \eqref{eq:1.2}, \eqref{eq:tau=infty} follows. Note that \eqref{eq:estimateoftau=infty} follows directly from \eqref{eq:tau=infty}.
\end{proof}



\rems{
\begin{remark}
To estimate the probability of the event
\begin{equation*}
\{X_{1}(t) - r_2 \, \sup_{0 \leq s \leq t} (-X_{2}(s))^{+} > \xi \text{ for every $t \geq 0$}\},
\end{equation*}
we note that the above event contains the intersection of the event $\{X_1(t) > \xi +c \text{ for every $t$}\}$ and the event $\{\sup_{0\leq s \leq t} \, (-X_2(s))^{+} < c /r_2 \text{ for every $t$} \}$ for every positive $c$, both of which correspond to the first hitting problems of one-dimensional Brownian motion with a drift. We will use the idea repeatedly in the proofs of Theorem \ref{thm:uvtendtoinfty} and Lemma \ref{lemma:boundtau}.
\end{remark}
}

We now turn to Proposition \ref{proposition:hittingtime} below, \rems{which is a reformulation of the formula 1.2.4(1) on p. 252 of \cite{Borodin}.}

\begin{proposition} \label{proposition:hittingtime}
Let $B(t)$ be a one dimensional Brownian motion started from the origin under $\PP$. For $\mu >0$ and $x>0$, we have
\begin{equation*}
\PP(B(t) + \mu t > -x \text{ for every $t \geq 0$}) = 1- e^{-2x\mu}.
\end{equation*}
\end{proposition}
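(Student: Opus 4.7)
The plan is to translate the statement into a first-passage problem and then apply an exponential martingale argument. Set $X(t) := B(t) + \mu t$ and let $T_{-x} := \inf\{t \geq 0 : X(t) = -x\}$, with the convention $\inf \emptyset = \infty$. Since $X(0) = 0 > -x$ and paths are continuous, the event $\{X(t) > -x \text{ for every } t \geq 0\}$ coincides exactly with $\{T_{-x} = \infty\}$, so the identity to be proved is equivalent to $\PP(T_{-x} < \infty) = e^{-2x\mu}$.

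Next I would introduce the exponential martingale
\[
N(t) := \exp(-2\mu X(t)) = \exp\bigl(-2\mu B(t) - 2\mu^{2} t\bigr).
\]
A direct computation using $\EE[\exp(\lambda B(t))] = \exp(\lambda^{2} t / 2)$ shows that $N$ is a $\PP$-martingale with $N(0) = 1$; the key point is that the choice $\lambda = -2\mu$ makes $\lambda^{2}/2 + \lambda \mu = 0$, killing the explicit $t$-dependence so that $N(t) = e^{-2\mu X(t)}$. Applying optional stopping at the bounded stopping time $T_{-x} \wedge n$, and noting that $X(s) \geq -x$ for $s \leq T_{-x}$, we obtain $N(s\wedge T_{-x}) \leq e^{2\mu x}$, so that $\EE[N(T_{-x} \wedge n)] = 1$ for every $n$.

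To pass to the limit $n \to \infty$, I would split according to the two possibilities for $T_{-x}$. On $\{T_{-x} < \infty\}$, continuity gives $N(T_{-x} \wedge n) \to e^{2\mu x}$; on $\{T_{-x} = \infty\}$, the strong law of large numbers for Brownian motion yields $X(t)/t \to \mu > 0$ almost surely, hence $X(t) \to \infty$ and $N(t) \to 0$. The uniform bound $N(\cdot \wedge T_{-x}) \leq e^{2\mu x}$ then justifies dominated convergence, giving
\[
1 \;=\; e^{2\mu x}\, \PP(T_{-x} < \infty) + 0 \cdot \PP(T_{-x} = \infty),
\]
so $\PP(T_{-x} < \infty) = e^{-2\mu x}$ and the proposition follows. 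There is no real obstacle here; the only delicate point is the limit passage, handled by the deterministic bound on the stopped martingale together with the a.s.\ divergence of $X(t)$ on $\{T_{-x} = \infty\}$. Since this is a textbook first-passage formula, one could alternatively just cite it directly from \cite{Borodin} as the authors indicate.
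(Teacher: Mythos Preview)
Your argument is correct: the exponential martingale $N(t)=e^{-2\mu X(t)}$ together with optional stopping and the bound $N(\cdot\wedge T_{-x})\le e^{2\mu x}$ gives exactly $\PP(T_{-x}<\infty)=e^{-2\mu x}$, and the limit passage is properly justified by dominated convergence and the a.s.\ divergence $X(t)\to\infty$ on $\{T_{-x}=\infty\}$.

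The paper does not give a proof at all; it simply states that the proposition is a reformulation of formula 1.2.4(1) on p.~252 of \cite{Borodin}. So your approach is not so much \emph{different} as \emph{additional}: you supply a self-contained derivation where the authors are content to quote a handbook. Your closing remark already anticipates this, and either route is perfectly acceptable here.
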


With Lemma \ref{lemma:crucial1} and Proposition \ref{proposition:hittingtime} in hand, we state Theorem \ref{thm:uvtendtoinfty} below.
\begin{theorem}
When the starting point tends to infinity, the probability that \rems{the} process does not hit the origin tends to one. Namely,
\begin{equation*}
\lim_{\rems{\|(u,v)\|} \rightarrow \infty} \, \PP_{(u,v)} [T = \infty ] =1.
\end{equation*}
Equivalently,
$$
\lim_{\rems{\|(u,v)\|} \rightarrow \infty} \, \PP_{(u,v)} [T < \infty ] =0.
$$
\label{thm:uvtendtoinfty}
\end{theorem}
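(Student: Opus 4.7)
The plan is to show that when $u$ or $v$ is large, the corresponding coordinate of $Z$ stays bounded away from $0$ with probability tending to $1$, forcing $T=\infty$. Concretely, for any $\xi>0$, the event $\{\tau_1^\xi=\infty\}$ means $Z_1(t\wedge T)>\xi$ for all $t\ges 0$, so if $T<\infty$ we would have $Z_1(T)>\xi$, contradicting $Z(T)=0$. Hence $\{\tau_1^\xi=\infty\}\subset\{T=\infty\}$, and symmetrically for $\tau_2^\xi$. It therefore suffices to lower bound $\PP_{(u,v)}[\tau_1^\xi=\infty]$ (resp.\ $\PP_{(u,v)}[\tau_2^\xi=\infty]$) when $u$ (resp.\ $v$) is large.

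Next, by Lemma~\ref{lemma:crucial1} and the remark that follows it, for any $c>0$ the event on the right-hand side of \eqref{eq:estimateoftau=infty} contains $\{X_1(t)>\xi+c \text{ for every } t\}\cap\{X_2(s)>-c/r_2 \text{ for every } s\}$. Since $W_1$ and $W_2$ are correlated with covariance $\rho$, these two events are not independent; I would therefore use the subadditive bound $\PP[A\cap B]\ges 1-\PP[A^c]-\PP[B^c]$, which reduces the estimate to two one-dimensional first-hitting problems. Proposition~\ref{proposition:hittingtime} then yields, for $u>\xi+c$,
$$\PP_{(u,v)}[\tau_1^\xi=\infty]\ges 1-e^{-2(u-\xi-c)\mu_1}-e^{-2(v+c/r_2)\mu_2}.$$

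To conclude, fix $\xi=1$ and note that $\|(u,v)\|\to\infty$ forces $\max(u,v)\to\infty$. In the case $u\ges\|(u,v)\|/\sqrt{2}$, I would choose $c=u/2$: then $u-\xi-c=u/2-1\to\infty$ and $v+c/r_2\ges u/(2r_2)\to\infty$ (using $v\ges 0$), so both exponentials vanish and $\PP_{(u,v)}[\tau_1^1=\infty]\to 1$. The symmetric case $v\ges\|(u,v)\|/\sqrt{2}$ is handled by the dual version of Lemma~\ref{lemma:crucial1} mentioned in its statement. The main obstacle is the correlation between $W_1$ and $W_2$, which blocks a direct product-form computation; the inclusion-exclusion bound circumvents it, and choosing $c$ linear in $u$ is what ensures that the second exponential decays even when $v$ stays bounded.
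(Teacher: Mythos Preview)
Your proposal is correct and follows essentially the same approach as the paper's proof: both use the inclusion $\{\tau_1^\xi=\infty\}\subset\{T=\infty\}$, invoke Lemma~\ref{lemma:crucial1} to reduce to a condition on $X_1,X_2$, split that event into an intersection with a free parameter $c$ (you and the paper both take $c=u/2$), apply the subadditive bound $\PP[A\cap B]\ges\PP[A]+\PP[B]-1$ to handle the correlation, and finish with Proposition~\ref{proposition:hittingtime}. The only cosmetic differences are that the paper keeps $\xi$ generic and splits according to $u>2\xi$ versus $v>2\xi$, whereas you fix $\xi=1$ and split according to which coordinate dominates $\|(u,v)\|/\sqrt{2}$.
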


\begin{proof}

Fix $\xi >0$. For $\|(u,v)\|$ sufficiently large, we have $u >2 \xi$ or $v>2 \xi$. If $u >2 \xi$, by Lemma~\ref{lemma:crucial1}, we have
\begin{eqnarray*}
&&\PP_{(u,v)} [T = \infty \text{ \remst{for some n $\in$ $\mathbb{N}_{+}$}} ]
\geq \PP_{(u,v)} [\tau_{1}^{\xi} = \infty]  \\
&\geq& \PP_{(u,v)} \left[X_{1}(t) - r_2 \, \sup_{0 \leq s \leq t} (-X_{2}(s))^{+} > \xi \text{ for every $t \geq 0$}\right]  \\
&\geq& \PP_{(u,v)} [ X_{1}(t) > \xi + u/2 \text{ for every $t \geq 0$ and } X_{2}(t) > - u/(2r_2) \text{ for every $t \geq 0$}]  \\
&\geq& \PP_{(u,v)} [ X_{1}(t) > \xi + u/2 \text{ for every $t \geq 0$}] \\
&& + \PP_{(u,v)} [ X_{2}(t) > - u/(2r_2) \text{ for every $t \geq 0$}] -1  \\
&=& \PP_{(u,v)} [W_{1}(t) + \mu_1 t > -(u - 2\xi)/2 \text{ for every $t \geq 0$}]  \\
&& + \PP_{(u,v)} [ W_{2}(t) + \mu_2 t > - u/(2r_2) -v \text{ for every $t \geq 0$}] -1 \\
&=& 1- e^{-(u-2\xi)\mu_1} + 1- e^{-(u/r_2  + 2v) \mu_2} -1  \\
&=& 1- e^{-(u-2\xi)\mu_1} - e^{-(u/r_2  + 2v) \mu_2},
\end{eqnarray*}
where the second to last equality invokes Proposition \ref{proposition:hittingtime}. Similarly, if $v > 2 \xi$, we have
\begin{equation*}
\PP_{(u,v)} [T = \infty \text{ \remst{for some n $\in$ $\mathbb{N}_{+}$}}  ]
\geq 1- e^{-(v-2\xi)\mu_2} - e^{-(v/r_1  + 2u) \mu_1}.
\end{equation*}
Hence,
\rems{
\begin{eqnarray*}
&&
 \PP_{(u,v)} [T = \infty] 
 \\ & 
\geq& \max\{ (1- e^{-(u-2\xi)\mu_1} - e^{-(u/r_2  + 2v) \mu_2}) 1_{\{u > 2 \xi\}} , (1- e^{-(v-2\xi)\mu_2} - e^{-(v/r_1  + 2u) \mu_1}) 1_{\{v > 2 \xi\}}  \}.
\end{eqnarray*}}
Letting $(u,v)$ tend to $\infty$, the desired result follows.
\end{proof}

We now turn to Proposition \ref{contradiction} below, which shall be needed to prove Theorem \ref{thm:lim0}.
\begin{proposition}\label{contradiction}
We have the following subset relationship
\begin{eqnarray*}
\{ u + W_{1}(t) + \mu_1 t <0 \text{ and }  v + W_{2}(t) + \mu_2 t <0, \text{ for some $t \in \mathbb{R}_{+}$}\} \subset \{T < \infty\}.
\end{eqnarray*}
\end{proposition}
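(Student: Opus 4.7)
The plan is to argue by contradiction using the semimartingale decomposition \eqref{ourprocess} together with the standing assumption $r_1 r_2 \geq 1$ from \eqref{eq:conditionRmatrix}. Fix an $\omega$ in the event on the left-hand side and assume, for contradiction, that also $T(\omega)=\infty$. Then \eqref{ourprocess} holds for every $t \geq 0$, and in particular at the time $t^*$ where $X_1(t^*) < 0$ and $X_2(t^*) < 0$ (with $X_i$ as defined just before Lemma~\ref{lemma:crucial1}).

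Next I would combine non-negativity of $Z_1$ and $Z_2$ at time $t^*$ with the decomposition to get
\begin{align*}
l_1(t^*) - r_2\, l_2(t^*) &\;\geq\; -X_1(t^*) \;>\; 0,\\
-r_1\, l_1(t^*) + l_2(t^*) &\;\geq\; -X_2(t^*) \;>\; 0.
\end{align*}
Since $l_1,l_2 \geq 0$, the first inequality forces $l_1(t^*) > r_2\, l_2(t^*) \geq 0$, so $l_1(t^*)>0$; the second forces $l_2(t^*) > r_1\, l_1(t^*)>0$. Multiplying the two strict inequalities $l_1(t^*) > r_2\, l_2(t^*)$ and $l_2(t^*) > r_1\, l_1(t^*)$, which have strictly positive sides, yields $l_1(t^*)\, l_2(t^*) > r_1 r_2\, l_1(t^*)\, l_2(t^*)$, hence $r_1 r_2 < 1$.

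This contradicts \eqref{eq:conditionRmatrix}, so $T$ must be finite on the event in question, giving the desired inclusion. I do not expect any genuine obstacle here: the whole argument is the algebraic observation that, under $r_1 r_2 \geq 1$, the local-time corrections $l_1, l_2$ cannot simultaneously push both reflected coordinates back into the quadrant when both free coordinates $X_1(t^*)$ and $X_2(t^*)$ are strictly negative. The only thing to be careful about is invoking \eqref{ourprocess} only at times $t \leq T$, which is precisely why the contradiction is set up under the hypothesis $T=\infty$.
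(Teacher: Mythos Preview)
Your proof is correct and follows essentially the same approach as the paper: both argue by contradiction from the semimartingale decomposition \eqref{ourprocess} at the time $t^*$, using $Z_i(t^*)\geq 0$ and $l_i\geq 0$ together with $r_1 r_2\geq 1$. The only cosmetic difference is that the paper inverts the $2\times 2$ system to express $(r_1 r_2-1)l_i(t^*)$ as a negative quantity, whereas you keep the two inequalities and multiply them; the content is identical.
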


\begin{proof}
We prove this claim by contradiction. For the sake of contradiction, let us fix $ \omega \in  \{ u + W_{1}(t) + \mu_1 t <0 \text{ and }  v + W_{2}(t) + \mu_2 t <0, \text{ for some $t \in \mathbb{R}_{+}$}\}\cap  \{T = \infty\}$. Assuming $T= \infty$, the process can be written as
$$
\begin{cases}
Z_1(t)=u+W_1(t)+\mu_1 t+ l_1(t) -r_2 l_2(t),
\\
Z_2(t)=v+W_2(t)+\mu_2 t - r_1 l_1(t) +l_2(t).
\end{cases}
$$
Solving the linear system for $l_1$ and $l_2$, we obtain
$$
\begin{cases}
{(r_1 r_2 -1)}l_1(t)={(u+W_1(t)+\mu_1 t -Z_1(t)) +r_2 (v+W_2(t)+\mu_2 t -Z_2(t)) } ,
\\
{(r_1 r_2 -1)} l_2(t)={ r_1 (u+W_1(t)+\mu_1 t -Z_1(t)) + (v+W_2(t)+\mu_2 t -Z_2(t)) }.
\end{cases}
$$
For all $t \in \mathbb{R}_{+}$ such that
$$u+W_1(t)+\mu_1 t <0,$$ and
$$
v+W_2(t)+\mu_2 t <0,$$
we have
${(r_1 r_2 -1)}l_1(t) <0$ and ${(r_1 r_2 -1)}l_2(t)<0$, which is not possible since $l_1(t)$ and $l_2(t)\geqslant 0$ and as we assumed $(r_1 r_2 -1)\geqslant 0$. A contradiction has been reached. 
\end{proof}

Theorem \ref{thm:lim0} below considers the behavior of the process when the starting point tends to the origin.
\begin{theorem}\label{thm:lim0}
When the starting point tends to the origin, the probability that the process hits the origin in finite time tends towards one. That is,
$$\lim_{(u,v) \rightarrow (0,0)} \, \PP_{(u,v)} \left[
T<\infty
\right] 
=
 1,
 $$ 
or equivalently,
 $$\lim_{(u,v) \rightarrow (0,0)} \, \PP_{(u,v)} \left[
T = \infty
\right] =
 0.
 $$ 
\end{theorem}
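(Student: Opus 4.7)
The plan is to bound $\PP_{(u,v)}[T<\infty]$ from below using Proposition~\ref{contradiction} and then analyze the resulting driftless-Brownian event in the limit $(u,v)\to(0,0)$. By Proposition~\ref{contradiction},
\[
\PP_{(u,v)}[T<\infty] \;\geq\; \PP[G_{u,v}], \qquad G_{u,v}:=\{\exists\, t>0 : W_1(t)+\mu_1 t<-u \text{ and } W_2(t)+\mu_2 t<-v\},
\]
so it suffices to prove $\PP[G_{u,v}]\to 1$ as $(u,v)\to(0,0)$. The family $\{G_{u,v}\}$ is coordinatewise monotone: shrinking $u$ or $v$ enlarges the event. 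Hence for any sequence $(u_n,v_n)\to(0,0)$ and any $\epsilon>0$, eventually $G_{u_n,v_n}\supseteq G_{\epsilon,\epsilon}$, and monotone continuity of probability gives
\[
\liminf_{n\to\infty}\PP[G_{u_n,v_n}] \;\geq\; \lim_{\epsilon\downarrow 0}\PP[G_{\epsilon,\epsilon}] \;=\; \PP\Big[\bigcup_{\epsilon>0}G_{\epsilon,\epsilon}\Big] \;=\; \PP[G_{0,0}],
\]
where $G_{0,0}:=\{\exists\,t>0:W_1(t)+\mu_1 t<0\text{ and }W_2(t)+\mu_2 t<0\}$. The problem thus reduces to showing $\PP[G_{0,0}]=1$.

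For this I would invoke Blumenthal's $0$-$1$ law on the $\mathcal{F}_{0+}$-measurable event
\[
H \;:=\; \bigcap_{\epsilon>0}\{\exists\,t\in(0,\epsilon] : W_1(t)+\mu_1 t<0 \text{ and } W_2(t)+\mu_2 t<0\} \;\subseteq\; G_{0,0},
\]
which forces $\PP[H]\in\{0,1\}$. To rule out $\PP[H]=0$, bound, for each $\epsilon>0$,
\[
\PP[\exists\,t\in(0,\epsilon]:\ldots] \;\geq\; \PP[W_1(\epsilon)+\mu_1\epsilon<0,\,W_2(\epsilon)+\mu_2\epsilon<0],
\]
and use Brownian scaling to identify the $\epsilon\downarrow 0$ limit of the right-hand side with $\PP[N_1<0,\,N_2<0]$, where $(N_1,N_2)$ is a centered Gaussian with covariance matrix $\bigl(\begin{smallmatrix}1&\rho\\ \rho&1\end{smallmatrix}\bigr)$. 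Since $|\rho|<1$ this density is strictly positive on the open third quadrant, so $\PP[N_1<0,\,N_2<0]>0$; hence $\PP[H]=1$, $\PP[G_{0,0}]=1$, and the conclusion follows.

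The main conceptual step is the Blumenthal $0$-$1$ law application: the marginal estimate only yields positivity of the immediate-hit probability, not the value one. The passage from the diagonal convergence $\PP[G_{\epsilon,\epsilon}]\to 1$ to convergence along arbitrary sequences $(u_n,v_n)\to(0,0)$ is routine once the monotonicity of $\{G_{u,v}\}$ is noted, and the whole reduction uses nothing about the process $Z$ beyond Proposition~\ref{contradiction}.
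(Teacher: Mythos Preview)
Your proof is correct and follows essentially the same route as the paper: both reduce via Proposition~\ref{contradiction} to the planar Brownian event $G_{0,0}$ and then pass to the limit, with your monotonicity argument playing the role of the paper's Fatou step. The paper simply asserts $\PP[G_{0,0}]=1$ ``by the properties of planar Brownian motion,'' whereas you supply a clean Blumenthal $0$-$1$ law justification for this fact.
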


\begin{proof}

By Proposition \ref{contradiction}, we have that
$$
\PP_{(u,v)} \left[
T<\infty
\right] \geqslant 
\PP \left[\exists t \in \mathbb{R}_{+}:
u+W_1(t)+\mu_1 t <0
\text{ and }
v+W_2(t)+\mu_2 t <0
\right].
 $$
By the properties of planar Brownian motion, we have
 $$ 
\PP \left[\exists t \in \mathbb{R}_{+}:
W_1(t)+\mu_1 t <0
\text{ and }
W_2(t)+\mu_2 t <0
\right]=1.
 $$ 
 Let $(u_n,v_n)\in\mathbb{R}_+^2$ be a sequence of points such that $ (u_n,v_n) \rightarrow \rems{(0,0)}$.
Note that
\begin{equation*}
\begin{aligned}
&\bigcup_{n=1}^{\infty} \bigcap_{m=n}^{\infty} \left\{\exists t \in \mathbb{R}_{+}:
u_n+W_1(t)+\mu_1 t <0
\text{ and }
v_n+W_2(t)+\mu_2 t <0 \right\} \\
\supset& \left\{ \exists t \in \mathbb{R}_{+}:
W_1(t)+\mu_1 t <0
\text{ and }
W_2(t)+\mu_2 t <0 \right\}.
\end{aligned}
\end{equation*}
Applying Fatou's Lemma yields
$$ \liminf_{n \rightarrow \infty} \PP \left[\exists t \in \mathbb{R}_{+}:
u_n+W_1(t)+\mu_1 t <0
\text{ and }
v_n+W_2(t)+\mu_2 t <0
\right]
\geqslant
 1.
 $$ 
We may therefore conclude that
$$\PP_{(u_n,v_n)} \left[
T<\infty
\right] 
\underset{n \to \infty}{\longrightarrow}
 1,
 $$
and the desired result follows. 
\end{proof}

\subsection{\rems{Complementarity of absorption and escape}}
We now turn to Theorem \ref{thm:Ztoinfty}, which \rems{states} that the process has only two possible behaviors: either $T<\infty$, \remst{which means that the process is absorbed at the origin in finite time,} or $T=\infty$, in which case \remst{the process escapes to infinity, namely }$Z(t)\to\infty$ when $t\to\infty$. The result first requires the proofs of \rems{three auxiliary statements}
which we give below.

\begin{proposition} \label{proposition:exittime}
Suppose $B(t)$ is a one dimensional Brownian motion starting from the origin under the measure $\PP$. Let $a,b$ be two positive numbers. Then
\begin{equation*}
\PP (-a -b t < B(t) < a + b t \text{ for every $t\geq 0$} ) >0.
\end{equation*}
\end{proposition}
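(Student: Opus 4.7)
The plan is to use the strong Markov property at a sufficiently large deterministic time $T_0$ to decouple the problem. First, restrict attention to an event on $[0,T_0]$ where $B$ stays in a narrow tube around $0$; second, on the post-$T_0$ Brownian increment, use the standard formula for the supremum of Brownian motion with negative drift together with a union bound to force $B$ to stay inside the widening cone $(-a-bt, a+bt)$ for all $t\ge T_0$. Both pieces have positive probability, and they are independent by the Markov property, hence so is their intersection.

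Concretely, for $T_0>0$ to be fixed below, define
\[
A_1 := \Bigl\{\sup_{0\le t\le T_0}|B(t)|\le a/2\Bigr\},\qquad B'(s):=B(T_0+s)-B(T_0),\ s\ge 0,
\]
and
\[
A_2 := \Bigl\{|B'(s)|<a/2+bT_0+bs\text{ for every }s\ge 0\Bigr\}.
\]
Clearly $\PP(A_1)>0$ for any $T_0$ by continuity of Brownian paths. A direct check using the triangle inequality shows $A_1\cap A_2\subset\{-a-bt<B(t)<a+bt\ \forall t\ge 0\}$: on $[0,T_0]$ the bound $|B(t)|\le a/2<a\le a+bt$ is immediate, while for $t=T_0+s$ with $s\ge 0$ we have $B(t)=B(T_0)+B'(s)$, so on $A_1\cap A_2$,
\[
B(t)< a/2+(a/2+bT_0+bs)=a+bt,\qquad B(t)>-a/2-(a/2+bT_0+bs)=-a-bt.
\]

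To bound $\PP(A_2)$, I would invoke the classical identity $\PP(\sup_{s\ge 0}(B'(s)-bs)\ge c)=e^{-2bc}$ for $c>0$ (and its reflection for the lower bound). A union bound then gives
\[
\PP(A_2^c)\le 2e^{-2b(a/2+bT_0)}=2e^{-ab-2b^2T_0},
\]
which is strictly less than $1$ provided $T_0>(\log 2-ab)/(2b^2)$. Fixing any such $T_0$, both $\PP(A_1)$ and $\PP(A_2)$ are positive. Since $A_1\in\mathcal F_{T_0}$ and $A_2$ is measurable with respect to the post-$T_0$ increments, they are independent, so $\PP(A_1\cap A_2)=\PP(A_1)\PP(A_2)>0$, and the desired conclusion follows.

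There is no real obstacle here; the only subtlety is to choose a bound strictly smaller than $a$ on $[0,T_0]$ (the value $a/2$ is convenient) so that the fluctuation of $B(T_0)$ can be absorbed into the cone bound for $t\ge T_0$, and to take $T_0$ large enough that the Markovian term $-2b^2T_0$ dominates in the exponent of the union bound regardless of the size of the product $ab$.
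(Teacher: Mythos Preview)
Your proof is correct and follows essentially the same two-step strategy as the paper's: confine $B$ near the origin for an initial time interval, then use independence together with the formula $\PP(\sup_{s\ge 0}(B(s)-bs)\ge c)=e^{-2bc}$ and a union bound to handle the tail. The only cosmetic difference is that the paper applies the strong Markov property at the random time $H_a=\inf\{t:|B(t)|=a\}$ (conditioned on $H_a$ being large), whereas you use the simple Markov property at a deterministic time $T_0$; your version is marginally more elementary since it only needs independence of increments.
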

\begin{proof}
Let $\lambda = \ln2 /(2b) +1$. Note that $1- 2\,e^{-2\lambda b} >0$. Then
\begin{eqnarray*}
&&\PP( -\lambda -b t < B(t) < \lambda + b t \text{ for every $t\geq 0$} )  \\
&\geq& \PP(  B(t)  >- \lambda -b t \text{ for every $t\geq 0$} ) 
+ \PP( B(t) < \lambda + b t \text{ for every $t\geq 0$} ) -1  \\
&=& 2 (1- e^{-2\lambda b}) -1 = 1- 2\,e^{-2\lambda b} >0.
\end{eqnarray*}
Let $H_a := \inf \{ t: |B(t)| =a \}$. By standard exit time properties of Brownian motion, $\PP(H_a > \lambda/b +1)> 0$. Then
\begin{eqnarray*}
&& \PP(-a-b t < B(t) < a + b t \text{ for every $t \geq 0$}) \\
&=& \PP(H_a > \lambda/b +1) \,\PP(-a-b t < B(t) < a + b t \text{ for every $t \geq 0$} \ | H_a > \lambda /b +1). 
\end{eqnarray*}
By the strong Markov property of Brownian motion,
\begin{eqnarray*}
&&\PP(-a-b t < B(t) < a + b t , \forall t \ | H_a > \lambda /b +1)  \\
&=& \PP(-a-b (t+H_a) < B(t + H_a) < a + b (t + H_a)  , \forall t  \ | H_a > \lambda /b +1)  \\
&=&  \PP(-a-b (t+H_a) -B(H_a) < B(t + H_a) -B(H_a) < a + b (t + H_a) -B(H_a)  , \forall t \  | H_a > \lambda /b +1)  \\
&\geq& \PP(-\lambda - b t < B(t + H_a) -B(H_a) < \lambda + b t  , \forall t \ | H_a > \lambda /b +1) \\
&=& \PP( -\lambda -b t < B(t) < \lambda + b t, \forall t \,) \\
&>& 0,
\end{eqnarray*}
from which the desired result follows.
\end{proof}

We now turn to Lemma \ref{lemma:boundtau}.

\begin{lemma} \label{lemma:boundtau}
For $\alpha$ a positive number, 
\begin{eqnarray}
&& \inf_{u \geq \alpha} \, \rems{\PP}_{(u,0)} [\tau_{1}^{0} = \infty] >0, \label{eq:inf>01} \\
&& \inf_{v \geq \alpha} \, \rems{\PP}_{(0,v)} [\tau_{2}^{0} = \infty] >0. \label{eq:inf>02}
\end{eqnarray}
\end{lemma}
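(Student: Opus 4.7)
The plan is to establish the first inequality, $\inf_{u \geq \alpha} \PP_{(u,0)}[\tau_1^0 = \infty] > 0$; the second follows by the symmetric argument. My strategy is two-tier: first invoke the inclusion--exclusion trick from the Remark after Lemma~\ref{lemma:crucial1} to secure a clean lower bound that is effective only when the starting point is large, then use the Markov property at a fixed time to transport an arbitrary starting point $(u,0)$ with $u \geq \alpha$ into this large-starting-point regime with uniformly positive probability.

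For the first tier I would take $\xi \downarrow 0$ in Lemma~\ref{lemma:crucial1} and apply the Remark with $c := u/2$, combined with Proposition~\ref{proposition:hittingtime}. This yields, for every $u > 0$ and $v \geq 0$,
\[
\PP_{(u,v)}[\tau_1^0 = \infty] \geq 1 - e^{-u\mu_1} - e^{-2(u/(2r_2) + v)\mu_2} \geq 1 - e^{-u\mu_1} - e^{-u\mu_2/r_2}.
\]
I then fix a threshold $U_0 = U_0(\mu_1, \mu_2, r_2)$ such that the right-hand side is $\geq 1/2$ whenever $u \geq U_0$, uniformly in $v \geq 0$.

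For the second tier, set $t_0 := 2U_0/\mu_1$ and introduce the event
\[
\tilde E := \big\{\sup_{t \leq t_0}|W_1(t)| < \alpha/4\big\} \cap \big\{\sup_{s \leq t_0}|W_2(s)| < \alpha/(4 r_2)\big\}.
\]
On $\tilde E$, while $t < \tau_1^0$ one has $l_1 \equiv 0$, and the one-dimensional Skorokhod identity gives $l_2(t) = \sup_{s \leq t}(-W_2(s) - \mu_2 s)^+ \leq \alpha/(4 r_2)$; hence, for every $u \geq \alpha$ and every $t \leq t_0 \wedge \tau_1^0$,
\[
Z_1(t) = u + W_1(t) + \mu_1 t - r_2 l_2(t) \geq u - \alpha/2 + \mu_1 t \geq \alpha/2 > 0.
\]
By continuity this forces $\tau_1^0 > t_0$, and evaluating at $t_0$ yields $Z_1(t_0) \geq \alpha/2 + 2U_0 \geq U_0$. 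Applying the Markov property at $t_0$ together with the first-tier bound at the new starting point $Z(t_0)$, I obtain, uniformly in $u \geq \alpha$,
\[
\PP_{(u,0)}[\tau_1^0 = \infty] \geq \EE_{(u,0)}\big[\mathbf{1}_{\tilde E}\, \PP_{Z(t_0)}[\tau_1^0 = \infty]\big] \geq \tfrac{1}{2}\,\PP(\tilde E).
\]

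The only remaining point is $\PP(\tilde E) > 0$, and this is the sole place where the nondegeneracy $|\rho| < 1$ intervenes. Writing $W_1 = \rho W_2 + \sqrt{1-\rho^2}\,\widetilde W$ with $\widetilde W$ a standard Brownian motion independent of $W_2$, the event $\tilde E$ contains an intersection of two small-ball events on the \emph{independent} Brownian motions $\widetilde W$ and $W_2$, and the one-dimensional small-ball estimate makes the product strictly positive. I expect the main conceptual obstacle to be that the naive inclusion--exclusion bound is vacuous for small $\alpha$ (the exponentials $e^{-2(\alpha-c)\mu_1}$ and $e^{-2c\mu_2/r_2}$ always sum to more than $1$ for every $c \in (0,\alpha)$); the Markov-transport step is what circumvents this by first moving the process into the region where the elementary bound is nontrivial.
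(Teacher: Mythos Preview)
Your proof is correct, but it follows a genuinely different route from the paper's.

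The paper proves the lemma by a single direct estimate: fixing $\xi<\alpha$, it bounds $\PP_{(u,0)}[\tau_1^0=\infty]$ from below by the probability of the intersection
\[
\{W_1(t)+\mu_1 t>-(u-\xi)/2 \ \forall t\}\cap\{W_2(t)+\mu_2 t>-(u-\xi)/(2r_2)\ \forall t\},
\]
and then, because $W_1,W_2$ are correlated, it decomposes $(W_1,W_2)=(B_1,\rho B_1+\sqrt{1-\rho^2}\,B_2)$ and runs a case analysis on the sign of $\rho$. For $\rho\le 0$ this requires the auxiliary two-sided exit estimate of Proposition~\ref{proposition:exittime}. The outcome is an explicit positive lower bound valid for every $u\ge\alpha$.

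Your two-tier scheme trades that case analysis for a structural argument. The union bound $\PP[A\cap B]\ge\PP[A]+\PP[B]-1$ in your first tier sidesteps the correlation entirely, at the cost of being nontrivial only for large $u$; you then exploit the positive drift and the Markov property at a fixed time $t_0$ to push any starting point $u\ge\alpha$ into that large-$u$ regime with uniformly positive probability. The nondegeneracy $|\rho|<1$ enters only once, through the small-ball estimate for $\tilde E$, which is a softer use of the covariance structure than the paper's explicit decomposition.

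What each buys: the paper's route is self-contained and yields a quantitative bound in $u$ without the Markov step, but needs Proposition~\ref{proposition:exittime} and the three-case split. Your route is more conceptual, avoids the sign-of-$\rho$ analysis, and would adapt readily to other drifts or reflection data; on the other hand it gives a less explicit constant and relies on an additional Markov-property step.
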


\begin{proof}
We need only prove \eqref{eq:inf>01}, since the proof of \eqref{eq:inf>02} is completely symmetric. Let us consider $\xi < \alpha$. By Lemma \ref{lemma:crucial1},
\begin{eqnarray}
&&\PP_{(u,0)} [\tau_{1}^{0} = \infty]
\geq \PP_{(u,0)} [\tau_{1}^{\xi} = \infty] \notag \\
&\geq& \PP_{(u,0)} \left[X_{1}(t) - r_2 \, \sup_{0 \leq s \leq t} (-X_{2}(s))^{+} > \xi \text{ for every $t \geq 0$}\right]  \notag \\
&=& \PP_{(u,0)} \left[ u + W_{1}(t) + \mu_1 t - r_2 \, \sup_{0 \leq s \leq t} (-W_{2}(s) - \mu_2 t)^{+} > \xi \text{ for every $t \geq 0$}\right] \notag \\
&\geq& \PP_{(u,0)} [W_{1}(t) + \mu_1 t > -(u - \xi)/2 \text{ for every $t \geq 0$} \notag \\
&& \ \ \ \ \ \ \ \ \text{and } W_{2}(t) + \mu_2 t > -(u - \xi)/(2 r_2) \text{ for every $t \geq 0$}]. \label{eq:2.1}
\end{eqnarray}
Let $B_{1}(t)$ and $B_{2}(t)$ be two independent Brownian motions starting from $0$ under $\PP_{(u,0)}$. Then, under $\PP_{(u,0)}$, the process $(W_{1}(t), W_{2}(t))$ has the same law as $( B_{1}(t) , \rho B_{1}(t) + \sqrt{1 - \rho^2} B_{2}(t) )$. We now show that \eqref{eq:inf>01} holds in three separate cases: $\rho =0$, $0 < \rho <1 $ and $-1 <\rho <0$.\\

\noindent \underline{Case I: $\rho=0$.}
If $\rho =0$, then $W_{1}(t)$ and $W_{2}(t)$ are two independent Brownian motions. Then
\begin{eqnarray*}
\eqref{eq:2.1}& =& \PP_{(u,0)} [W_{1}(t) + \mu_1 t > -(u - \xi)/2 \text{ for every $t \geq 0$}] \\
&& \times \, \PP_{(u,0)} [ W_{2}(t) + \mu_2 t > -(u - \xi)/(2 r_2) \text{ for every $t \geq 0$}]  \\
&=& \left( 1 - e^{-(u - \xi) \mu_1} \right) \cdot \left( 1- e^{-(u-\xi)\mu_2/r_2} \right),
\end{eqnarray*}
where the last equality invokes Proposition \ref{proposition:hittingtime}. Taking infimums yields
\begin{equation*}
\inf_{u \geq \alpha} \, \rems{\PP}_{(u,0)} [\tau_{1}^{0} = \infty]
\geq \left( 1 - e^{-(\alpha - \xi) \mu_1} \right) \cdot \left( 1- e^{-(\alpha-\xi)\mu_2/r_2} \right)
>0.
\end{equation*}

\noindent \underline{Case II: $0 < \rho <1$.}
If $0 < \rho <1$, then
\begin{eqnarray*}
\eqref{eq:2.1}& =& \PP_{(u,0)} [B_{1}(t) + \mu_1 t > -(u - \xi)/2 \text{ for every $t \geq 0$}  \\
&& \ \ \ \ \ \ \ \ \text{and } \rho B_{1}(t) + \sqrt{1 - \rho^2} B_{2}(t) + \mu_2 t > -(u - \xi)/(2 r_2) \text{ for every $t \geq 0$}]  \\
&\geq& \PP_{(u,0)} [B_{1}(t) + (\mu_1 \wedge \mu_2) t > -(u - \xi)/(2 r_2) \text{ for every $t \geq 0$} \notag \\
&& \ \ \ \ \ \ \ \ \text{and } \sqrt{1 - \rho^2} B_{2}(t) + (1-\rho)\mu_2 t > -(1-\rho)(u - \xi)/(2 r_2) \text{ for every $t \geq 0$}].
\end{eqnarray*}
Using the same argument in the case for $\rho = 0$, \eqref{eq:inf>01} follows.\\

\noindent \underline{Case III: $-1 < \rho <0$.}
If $-1 < \rho <0$, then for $u \geq \alpha$
\begin{eqnarray*}
\eqref{eq:2.1}& =& \PP_{(u,0)} [B_{1}(t) + \mu_1 t > -(u - \xi)/2 \text{ for every $t \geq 0$}  \\
&& \ \ \ \ \ \ \ \ \text{and } \rho B_{1}(t) + \sqrt{1 - \rho^2} B_{2}(t) + \mu_2 t > -(u - \xi)/(2 r_2) \text{ for every $t \geq 0$}]  \\
& \geq& \PP_{(u,0)} [B_{1}(t) + \mu_1 t > -(u - \xi)/2 \text{ for every $t \geq 0$} , \\
&& \ \ \ \ \ \ \ \  \rho B_{1}(t) - \rho (\mu_1 \wedge \mu_2) t > - |\rho|(u - \xi)/(2 r_2) \text{ for every $t \geq 0$}  \\
&&\text{and } \sqrt{1 - \rho^2} B_{2}(t) + (\mu_2+ \rho(\mu_1 \wedge \mu_2) t > -(1-|\rho|)(u - \xi)/(2 r_2) \text{ for every $t \geq 0$}] \\
& \geq& \PP_{(u,0)} [-(u - \xi)/(2r_2) - (\mu_1 \wedge \mu_2) t <B_{1}(t) < (u - \xi)/(2r_2) +(\mu_1 \wedge \mu_2) t, \forall t  \\
&&\text{and } \sqrt{1 - \rho^2} B_{2}(t) + (\mu_2+ \rho(\mu_1 \wedge \mu_2) t > -(1-|\rho|)(u - \xi)/(2 r_2), \forall t] \\ 
&=& \PP_{(u,0)} [-(u - \xi)/(2r_2) - (\mu_1 \wedge \mu_2) t <B_{1}(t) < (u - \xi)/(2r_2) +(\mu_1 \wedge \mu_2) t, \forall t]  \\
&& \times \, \PP_{(u,0)}[\sqrt{1 - \rho^2} B_{2}(t) + (\mu_2+ \rho(\mu_1 \wedge \mu_2) t > -(1-|\rho|)(u - \xi)/(2 r_2), \forall t]  \\
&\geq& \PP_{(u,0)} [-(\alpha - \xi)/(2r_2) - (\mu_1 \wedge \mu_2) t <B_{1}(t) < (\alpha - \xi)/(2r_2) +(\mu_1 \wedge \mu_2) t, \forall t]  \\
&& \times \, \PP_{(u,0)}[\sqrt{1 - \rho^2} B_{2}(t) + (\mu_2+ \rho(\mu_1 \wedge \mu_2) t > -(1-|\rho|)(\alpha - \xi)/(2 r_2), \forall t].
\end{eqnarray*}
Taking infimums and invoking Proposition \ref{proposition:exittime}, \eqref{eq:inf>01} follows. This concludes the proof.
\end{proof}

Let us denote $T_{r} := \inf\{t\geq 0 : \|Z(t \wedge T)\| \leq r\} $.

\begin{lemma} \label{lemma:infty1/n}
For fixed $n$, on the event $\{T_{1/n} = \infty\}$, we have $\PP_{(u,v)}$-a.s. 
\begin{equation*}
\lim_{t \rightarrow \infty} Z(t) = \infty.
\end{equation*}
That is,
\begin{equation}
\PP_{(u,v)} \left[\liminf_{t \rightarrow \infty} \, Z(t) < \infty, T_{\frac{1}{n}} = \infty\right] = 0. \label{eq:tendinfty}
\end{equation}
\end{lemma}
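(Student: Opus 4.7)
The plan is to argue by contradiction: suppose $\PP_{(u,v)}\bigl[\liminf_{t\to\infty}\|Z(t)\| < \infty,\ T_{1/n} = \infty\bigr] > 0$. By countable additivity, there must exist an integer $R > 1/n$ such that the event
\[
E_R \;:=\; \{T_{1/n} = \infty\}\cap\{\liminf_{t\to\infty}\|Z(t)\| \leq R\}
\]
has positive probability. On $E_R$, first note that $T \geq T_{1/n} = \infty$ (because $\|Z(T)\|=0\leq 1/n$ whenever $T<\infty$), so the defining equations \eqref{ourprocess} hold for all $t\geq 0$; moreover $\|Z(t)\|\geq 1/n$ always, and there is a sequence $t_k\to\infty$ with $\|Z(t_k)\|\leq R$, so $Z(t_k)$ lies in the closed annulus $A_R := \{x\in\mathbb{R}_+^2 : 1/n\leq\|x\|\leq R\}$. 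The idea is to combine this recurrence to $A_R$ with a uniform lower bound on the probability of hitting $\overline{B(0,1/n)}$ within one unit of time, via the strong Markov property, to obtain a contradiction.

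The central ingredient is the following uniform bound: there exists $c>0$ such that $\PP_x[T_{1/n}\leq 1]\geq c$ for every $x\in A_R$. Reinspecting the contradiction argument of Proposition~\ref{contradiction}, one sees that if $u+W_1(t_0)+\mu_1 t_0 <0$ and $v+W_2(t_0)+\mu_2 t_0 <0$ for some $t_0\leq 1$, then the derivation of $l_1(t_0),l_2(t_0)<0$ is incompatible with $t_0<T$, forcing $T\leq t_0$. Hence for any $(u,v)\in A_R$ (in particular $u,v\leq R$),
\[
\PP_{(u,v)}[T\leq 1] \;\geq\; \PP\!\left[W_1(1)+\mu_1<-R,\ W_2(1)+\mu_2<-R\right] \;=:\; c \;>\;0,
\]
positivity coming from the non-degenerate bivariate Gaussian density. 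Since $T_{1/n}\leq T$, the same $c$ bounds $\PP_x[T_{1/n}\leq 1]$ below uniformly on $A_R$.

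With this in hand, define stopping times $\eta_0:=0$ and $\eta_{k+1}:=\inf\{t>\eta_k+1 : \|Z(t\wedge T)\|\leq R\}$, and set $B_k:=\{\eta_k<\infty,\ T_{1/n}>\eta_k\}$. On $B_k$ one has $Z(\eta_k)\in A_R$, so the strong Markov property together with the uniform bound gives
\[
\PP[T_{1/n}>\eta_k+1\mid \mathcal{F}_{\eta_k}]\ \leq\ 1-c\qquad\text{on }B_k.
\]
Since $B_{k+1}\subset B_k\cap\{T_{1/n}>\eta_k+1\}$, iterating yields $\PP[B_k]\leq (1-c)^k\to 0$. On the other hand, on $E_R$ an easy induction (using the sequence $t_k\to\infty$ with $Z(t_k)\in A_R$) shows $\eta_k<\infty$ and $T_{1/n}>\eta_k$ for every $k$, so $E_R\subset \bigcap_k B_k$ and therefore $\PP[E_R]=0$, contradicting the choice of $R$. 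Taking a union over integers $R>1/n$ yields \eqref{eq:tendinfty}.

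The step I expect to require most care is the strengthening of Proposition~\ref{contradiction} from $T<\infty$ to the quantitative statement $T\leq t_0$ that feeds the uniform constant $c$; once this is in place, the Borel--Cantelli/strong Markov recursion is routine, and the geometric decay $\PP[B_k]\leq(1-c)^k$ closes the argument cleanly.
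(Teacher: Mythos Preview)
Your argument is correct and takes a genuinely different route from the paper's. The paper proceeds by first treating starting points on the axes: it sets
\[
K := \sup_{u \geq 0}\ \PP_{(u,0)}\!\left[\liminf_{t\to\infty} Z(t) < \infty,\ T_{1/n} = \infty\right],
\]
defines the stopping times $\tau_1^0$ and $\eta_1^0$ (first return to the horizontal axis after $\tau_1^0$), decomposes according to whether $\tau_1^0$ and $\eta_1^0$ are finite, and invokes Lemma~\ref{lemma:boundtau} (the uniform lower bound on $\PP_{(u,0)}[\tau_1^0=\infty]$ for $u\geq 1/n$) to derive the self-bounding inequality $K \leq K\cdot \sup_{u\geq 1/n}\PP_{(u,0)}[\eta_1^0<\infty]$ with the supremum strictly less than $1$, forcing $K=0$; the interior case is then reduced to the boundary via the first hitting time of the axes. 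By contrast, you bypass Lemma~\ref{lemma:boundtau} entirely and instead strengthen Proposition~\ref{contradiction} to the quantitative statement $T\leq t_0$ whenever both drifted Brownian coordinates are negative at $t_0$; this yields a uniform-in-$x\in A_R$ lower bound on $\PP_x[T_{1/n}\leq 1]$, after which a standard strong-Markov/Borel--Cantelli recursion on the successive returns to $A_R$ gives the geometric decay $\PP[B_k]\leq (1-c)^k$. Your approach is more elementary and more portable (it uses no structural information about the Skorokhod map beyond what is already in Proposition~\ref{contradiction}), while the paper's route extracts more refined information about escape along each axis that is reused elsewhere (Lemma~\ref{lemma:boundtau}). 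One cosmetic point: take $E_R$ with the strict inequality $\liminf_t\|Z(t)\| < R$ so that the returns $\eta_k$ are guaranteed finite on $E_R$, and start the recursion at $k=1$ so that $Z(\eta_k)\in A_R$ holds without assuming $\|(u,v)\|\leq R$.
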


\begin{proof}
We will first show \eqref{eq:tendinfty} holds when $v=0$. Then \eqref{eq:tendinfty} will follow immediately in the case\remst{ that} $u=0$. We conclude by showing that \eqref{eq:tendinfty} holds when $u \neq 0$ and $v \neq 0$.\\

\noindent \underline{Case I: $v=0$.}
When $v=0$, let
\begin{equation*}
K := \sup_{u \geq 0} \, \PP_{(u,0)} \left[\liminf_{t \rightarrow \infty} \, Z(t) < \infty, T_{\frac{1}{n}} = \infty\right].
\end{equation*}
For $u \leq 1/n$,
\begin{equation*}
\PP_{(u,0)} \left[\liminf_{t \rightarrow \infty} \, Z(t) < \infty, T_{\frac{1}{n}} = \infty\right] = 0.
\end{equation*}
Then
\begin{equation}
K = \sup_{u \geq 1/n} \, \PP_{(u,0)} \left[\liminf_{t \rightarrow \infty} \, Z(t) < \infty, T_{\frac{1}{n}} = \infty\right].  \label{eq:3.1}
\end{equation}
We now define a stopping time
\begin{equation*}
\eta_{1}^{0} := \begin{cases}
 \inf \{ t \geq \tau_{1}^{0} :  Z_{2}(t) =0 \}, &  \tau_{1}^{0} < \infty,  \\
\infty , & \tau_{1}^{0} = \infty .
\end{cases}
\end{equation*}
By Lemma \ref{lemma:boundtau},
\begin{equation*}
\inf_{u \geq 1/n} \,\PP_{(u,0)} [\eta_{1}^{0} = \infty]
\geq \inf_{u \geq 1/n} \,\PP_{(u,0)} [\tau_{1}^{0} = \infty]
>0,
\end{equation*}
and hence,
\begin{equation}
\sup_{u \geq 1/n} \PP_{(u,0)}[\eta_{1}^{0} < \infty] <1. \label{eq:3.2}
\end{equation}
Note that
\begin{eqnarray}
&&
\PP_{(u,0)} \left[\liminf_{t \rightarrow \infty} \, Z(t) < \infty, T_{\frac{1}{n}} = \infty\right]  \notag \\
&=& \PP_{(u,0)} \left[\tau_{1}^{0} = \infty, \liminf_{t \rightarrow \infty} \, Z(t) < \infty, T_{\frac{1}{n}} = \infty\right] \notag \\
&& +\  \PP_{(u,0)} \left[\tau_{1}^{0} < \infty, \eta_{1}^{0} = \infty, \liminf_{t \rightarrow \infty} \, Z(t) < \infty, T_{\frac{1}{n}} = \infty\right] \notag \\
&& + \ \PP_{(u,0)} \left[\eta_{1}^{0} < \infty, \liminf_{t \rightarrow \infty} \, Z(t) < \infty, T_{\frac{1}{n}} = \infty\right]. \label{eq:decompostion3}
\end{eqnarray}
On the event $\{\tau_{1}^{0} = \infty\}$,  for all $t \geq 0$, $T= \infty$ and $l_1(t) = 0$. Then
\begin{eqnarray*}
Z_{2}(t) = X_{2}(t) + l_{2}(t) \geq X_{2}(t) 
=W_{2}(t) + \mu_2 t
\rightarrow \infty,
\end{eqnarray*}
$\PP_{(u,0)}$-a.s., by the law of the iterated logarithm for Brownian motion. Hence, the first term on the right-hand side of \eqref{eq:decompostion3} is $0$. We now consider the second term on the right-hand side of \eqref{eq:decompostion3}. On the event $\{ \tau_{1}^{0} < \infty \}$, let us define $\tilde{\eta}_{1}^{0} : = \inf \{ t\geq 0 : Z_{2}(t + \tau_{1}^{0}) =0\}$ and $ \tilde{T}_{1/n} := \inf \{t\geq 0 : \|Z(t + \tau_{1}^{0})\| \leq 1/n \}$. By the strong Markov property, we have
\begin{eqnarray*}
&& \PP_{(u,0)} \left[\tau_{1}^{0} < \infty, \eta_{1}^{0} = \infty, \liminf_{t \rightarrow \infty} \, Z(t) < \infty, T_{\frac{1}{n}} = \infty\right] \\
&=& \PP_{(u,0)} \left[\tau_{1}^{0} < \infty, \inf_{0 \leq s \leq \tau_{1}^{0}} \|Z(s)\|> \frac{1}{n}, \tilde{\eta}_{1}^{0} = \infty, \liminf_{t \rightarrow \infty} \, Z(t+ \tau_{1}^{0}) < \infty, \tilde{T}_{\frac{1}{n}} = \infty\right]   \\
&=& \EE_{(u,0)} \left[ \mathbbm{1}_{\{\tau_{1}^{0} < \infty, \inf_{0 \leq s \leq \tau_{1}^{0}} \|Z(s)\|> 1/n\}} 
\, \PP_{Z(\tau_{1}^{0})} \left[\eta_{1}^{0} = \infty, \liminf_{t \rightarrow \infty} \, Z(t) < \infty, T_{\frac{1}{n}} = \infty \right] \right] \\
&=& 0.
\end{eqnarray*}
By the same argument used to show that the first term on the right-hand side of \eqref{eq:decompostion3} is $0$, for $v >0$,
\begin{equation*}
\PP_{(0,v)} \left[\eta_{1}^{0} = \infty, \liminf_{t \rightarrow \infty} \, Z(t) < \infty, T_{\frac{1}{n}} = \infty \right] =0.
\end{equation*}
This proves that the second term on the right-hand side of \eqref{eq:decompostion3} is also $0$. We now consider the third term on the right-hand side of \eqref{eq:decompostion3}. On the event $\{ \eta_{1}^{0} < \infty \}$, let $\hat{T}_{1/n} := \inf \{ t \geq 0 : Z(t + \eta_{1}^{0}) \leq 1/n \}$. By the strong Markov property,
\begin{eqnarray*}
&&\PP_{(u,0)} \left[\eta_{1}^{0} < \infty, \liminf_{t \rightarrow \infty} \, Z(t) < \infty, T_{\frac{1}{n}} = \infty\right] \\
&=& \PP_{(u,0)} \left[\eta_{1}^{0} < \infty, \inf_{0 \leq s \leq \eta_{1}^{0}} \|Z(s)\|> \frac{1}{n},  \liminf_{t \rightarrow \infty} \, Z(t +\eta_{1}^{0}) < \infty, \hat{T}_{\frac{1}{n}} = \infty\right]  \\
&=& \EE_{(u,0)} \left[ \mathbbm{1}_{\{\eta_{1}^{0} < \infty, \inf_{0 \leq s \leq \eta_{1}^{0}} \|Z(s)\|> 1/n\}} 
\, \PP_{Z(\eta_{1}^{0})} \left[\liminf_{t \rightarrow \infty} \, Z(t) < \infty, T_{\frac{1}{n}} = \infty \right] \right]  \\
&\leq& K \cdot \EE_{(u,0)} \left[ \mathbbm{1}_{\{\eta_{1}^{0} < \infty, \inf_{0 \leq s \leq \eta_{1}^{0}} \|Z(s)\|> 1/n\}} \right]  \\
&\leq& K \cdot \PP_{(u,0)} [\eta_{1}^{0} < \infty].
\end{eqnarray*}
Combining \eqref{eq:decompostion3} and the above estimates yields
\begin{equation*}
\PP_{(u,0)} \left[\liminf_{t \rightarrow \infty} \, Z(t) < \infty, T_{\frac{1}{n}} = \infty\right]
\leq K \cdot \PP_{(u,0)} [\eta_{1}^{0} < \infty].
\end{equation*}
Taking supremums and invoking \eqref{eq:3.1}, we obtain
\begin{eqnarray*}
K = \sup_{u \geq 1/n} \, \PP_{(u,0)} \left[\liminf_{t \rightarrow \infty} \, Z(t) < \infty, T_{\frac{1}{n}} = \infty\right]
\leq K \cdot \sup_{u \geq 1/n} \PP_{(u,0)} [\eta_{1}^{0} < \infty].
\end{eqnarray*}
Together with \eqref{eq:3.2}, we have $K=0$. Hence, for every $u \geq 0$,
\begin{equation}
\PP_{(u,0)} \left[\liminf_{t \rightarrow \infty} \, Z(t) < \infty, T_{\frac{1}{n}} = \infty\right] = 0. \label{eq:3.3}
\end{equation}
Similarly, for every $v \geq 0$,
\begin{equation}
\PP_{(0,v)} \left[\liminf_{t \rightarrow \infty} \, Z(t) < \infty, T_{\frac{1}{n}} = \infty\right] = 0. \label{eq:3.4}
\end{equation}

\noindent \underline{Case II: $u \neq 0$ and $v \neq 0$.} For the case when $u \neq 0$ and $v \neq 0$, let $\tau := \inf \{t \geq 0 : Z_{1}(t) =0 \text{ or } Z_{2}(t) =0\}$. Then
\begin{eqnarray}
&& \PP_{(u,v)} \left[\liminf_{t \rightarrow \infty} \, Z(t) < \infty, T_{\frac{1}{n}} = \infty\right] \notag \\
&=& \PP_{(u,v)} \left[ \tau = \infty, \liminf_{t \rightarrow \infty} \, Z(t) < \infty, T_{\frac{1}{n}} = \infty\right] \notag \\
&& + \  \PP_{(u,v)} \left[ \tau < \infty, \liminf_{t \rightarrow \infty} \, Z(t) < \infty, T_{\frac{1}{n}} = \infty\right]. \label{eq:decompostion2}
\end{eqnarray}
On the event $\{\tau = \infty\}$, $T = \infty$ and, for every $t \geq 0$, $l_1(t)=l_2(t)=0$. Then, as $t \rightarrow \infty$,
\begin{equation*}
Z_{1}(t) = u + W_{1}(t) + \mu_1 t \rightarrow \infty,
\end{equation*}
 $\PP_{(u,v)}$-a.s. Hence the first term on the right-hand side of \eqref{eq:decompostion2} is $0$. We now consider the second term  on the right-hand side of \eqref{eq:decompostion2}. By the strong Markov property,
\begin{eqnarray*}
&& \PP_{(u,v)} \left[ \tau < \infty, \liminf_{t \rightarrow \infty} \, Z(t) < \infty, T_{\frac{1}{n}} = \infty\right]  \\
&\leq& \EE_{(u,v)} \left[ \mathbbm{1}_{\{\tau < \infty\}} \,\PP_{Z(\tau)} \left[\liminf_{t \rightarrow \infty} \, Z(t) < \infty, T_{\frac{1}{n}} = \infty\right]  \right]  \\
&=& 0,
\end{eqnarray*}
where \eqref{eq:3.3} and \eqref{eq:3.4} have been invoked in the last equality. Hence the second term on the right-hand side of \eqref{eq:decompostion2} is also $0$. Thus for $u \neq 0$ and $v \neq 0$,
\begin{equation*}
\PP_{(u,v)} \left[\liminf_{t \rightarrow \infty} \, Z(t) < \infty, T_{\frac{1}{n}} = \infty\right] = 0.
\end{equation*}
The proof is now complete.
\end{proof}


With the above results in hand, we are now ready to state Theorem \ref{thm:Ztoinfty}.

\begin{theorem}\label{thm:Ztoinfty}
On the event $\{T = \infty \}$, $\PP_{(u,v)}$-a.s. the process $Z(t)$ tends to infinity when $t\to\infty$, 
namely
$$
\PP_{(u,v)} 
\left. \left[
\lim_{t \rightarrow \infty} \, Z(t) = \infty
\right\vert 
T=\infty
\right]
=1.
$$
Equivalently,
$$
\PP_{(u,v)} 
 \left[
\liminf_{t \rightarrow \infty} \, Z(t) < \infty
, \,
T=\infty
\right]
=0.
$$

\end{theorem}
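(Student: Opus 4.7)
The plan is to decompose the event in question according to whether or not the process eventually avoids a neighborhood of the origin, and to dispatch each piece using the two auxiliary results already established. Specifically, I would write
$$
\{T = \infty,\ \liminf_{t\to\infty} \|Z(t)\| < \infty\} \subseteq A \cup B,
$$
where
\begin{align*}
A &:= \bigcup_{n\geqslant 1}\bigl(\{T_{1/n} = \infty\} \cap \{\liminf\nolimits_{t\to\infty}\|Z(t)\| < \infty\}\bigr),\\
B &:= \{T = \infty\} \cap \bigcap_{n\geqslant 1} \{T_{1/n} < \infty\}.
\end{align*}

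For the piece $A$, Lemma~\ref{lemma:infty1/n} asserts that on each event $\{T_{1/n} = \infty\}$ one has $Z(t) \to \infty$, $\PP_{(u,v)}$-a.s., so in particular $\liminf_{t\to\infty}\|Z(t)\| = \infty$. Each intersection $\{T_{1/n} = \infty\} \cap \{\liminf_{t\to\infty}\|Z(t)\| < \infty\}$ is therefore null, and countable subadditivity yields $\PP_{(u,v)}(A) = 0$.

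For the piece $B$, I would use the strong Markov property at $T_{1/n}$, together with the fact that $T_{1/n} < T$ on $\{T = \infty\}$ and hence $\|Z(T_{1/n})\| \leqslant 1/n$ on $B \cap \{T_{1/n} < \infty\}$. This yields
$$
\PP_{(u,v)}[T = \infty,\ T_{1/n} < \infty]
= \EE_{(u,v)}\bigl[\mathbbm{1}_{\{T_{1/n} < \infty\}}\, \PP_{Z(T_{1/n})}[T = \infty]\bigr].
$$
Theorem~\ref{thm:lim0} guarantees that $\PP_{(u',v')}[T = \infty] \to 0$ as $(u',v') \to (0,0)$; hence for every $\varepsilon > 0$ there exists $N$ such that $\PP_{(u',v')}[T = \infty] < \varepsilon$ for every $(u',v') \in \mathbb{R}_+^2$ with $\|(u',v')\| \leqslant 1/N$. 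It follows that $\PP_{(u,v)}[T = \infty,\ T_{1/N} < \infty] \leqslant \varepsilon$, and since $B \subseteq \{T = \infty,\ T_{1/N} < \infty\}$ one obtains $\PP_{(u,v)}(B) \leqslant \varepsilon$; letting $\varepsilon \downarrow 0$ yields $\PP_{(u,v)}(B) = 0$.

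The main subtlety lies in the treatment of $B$: one must extract a uniform bound on $\PP_{(u',v')}[T = \infty]$ over small balls around the origin from the pointwise convergence in Theorem~\ref{thm:lim0}, which is immediate from the $\varepsilon$--$\delta$ formulation of that limit. All other steps are routine applications of already-established results together with the strong Markov property.
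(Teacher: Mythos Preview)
Your proof is correct and follows essentially the same approach as the paper: both arguments use Lemma~\ref{lemma:infty1/n} to eliminate the contribution from $\{T_{1/n}=\infty\}$, then apply the strong Markov property at $T_{1/n}$ together with Theorem~\ref{thm:lim0} to show the remaining probability vanishes as $n\to\infty$. Your explicit $A\cup B$ decomposition is just a slightly more verbose packaging of the paper's argument, which instead writes the target probability as $\PP_{(u,v)}[\liminf Z(t)<\infty,\ T_{1/n}<\infty,\ T=\infty]$ directly for each $n$ and then bounds this via strong Markov.
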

\begin{proof}
We deduce from Lemma~\ref{lemma:infty1/n} that for every $n \in \mathbb{N}_{+}^*$
\begin{eqnarray*}
&&\PP_{(u,v)} 
 \left[
\liminf_{t \rightarrow \infty} \, Z(t) < \infty
, \,
T=\infty
\right] \\
&=& \PP_{(u,v)} 
 \left[
\liminf_{t \rightarrow \infty} \, Z(t) < \infty
, \, T_{\frac{1}{n}} < \infty, \,
T=\infty
\right].
\end{eqnarray*}
Applying the strong Markov property yields
\begin{eqnarray*}
&& \PP_{(u,v)} 
 \left[
\liminf_{t \rightarrow \infty} \, Z(t) < \infty
, \, T_{\frac{1}{n}} < \infty, \,
T=\infty
\right]  \\
&=& \mathbb{E}_{(u,v)} \left[ \mathbbm{1}_{\{T_{1/n} < \infty\}} \ 
\PP_{Z(T_{1/n})} \left[ \liminf_{t \rightarrow \infty} \, Z(t) < \infty , \, T =\infty \right] \right] \\
&\leq& \sup_{\|(u,v)\| = 1/n} \, \PP_{(u,v)} \left[ \liminf_{t \rightarrow \infty} \, Z(t) < \infty , \, T =\infty \right] \\
&\leq& \sup_{\|(u,v)\| = 1/n} \, \PP_{(u,v)} \left[  T =\infty \right].
\end{eqnarray*}
Applying Theorem~\ref{thm:lim0} and letting $n \rightarrow \infty$, the desired result follows.
\end{proof}

\section{Partial differential equation and functional equation}
\label{sec:functionalequation}

We now turn to the study of the escape probability $\mathbb{P}_{(u,v)}[T=\infty]$. We begin with Proposition~\ref{prop:PDE}, which provides partial differential equations characterizing the escape probability.  We proceed with Proposition~\ref{prop:functionaleq}, which gives a functional equation satisfied by the Laplace transform of the escape probability. \rems{Note that there is no particular difficulty in defining the process starting from the edge (except the origin)}.

Let us define the infinitesimal generator of the process inside the quarter plane as
$$
\mathcal{G} f (u,v) :=  \lim_{t\to 0} \frac{1}{t} (\mathbb{E}_{(u,v)} [f(Z(t \wedge T)] -f(u,v)),
$$
where $f$ must be a bounded function in the quadrant to ensure that the above limit exists and is uniform.
For $f$ twice differentiable, the infinitesimal generator inside the quadrant is
$$
\mathcal{G} f =
 \frac{1}{2} \left( \frac{\partial^2f }{\partial \rems{z_1^2}} + \frac{\partial^2f }{\partial \rems{z_2^2}} +2\rho \frac{\partial^2f }{\partial z_1 \partial z_2}  \right) +\mu_1 \frac{\partial f }{\partial z_1 } + \mu_2 \frac{\partial f }{\partial z_2 }  .
$$
\indent This leads us to Proposition \ref{prop:PDE}.
\begin{proposition}[Partial differential equation]
The absorption probability $$
 f(u,v)=\mathbb{P}_{(u,v)} [ T < \infty],
$$
is the only function which is both (i) bounded and continuous in the quarter plane and on its boundary and (ii) continuously differentiable in the quarter plane and on its boundary (except perhaps at the corner), and which satisfies the partial differential equation
$$
\mathcal{G} f (u,v)=0, \quad \forall (u,v)\in\mathbb{R}_+^2,
$$
with oblique Neumann boundary conditions
\begin{equation}
\begin{cases}
\partial_{r_1}f(0,v):=\frac{\partial f }{\partial u } (0,v)- r_1   \frac{\partial f }{\partial v } (0,v)=0 & \forall v>0,
\\
\partial_{r_2}f(u,0):=-r_2 \frac{\partial f }{\partial u} (u,0)+    \frac{\partial f }{\partial v } (u,0)=0 & \forall u>0,
\end{cases}
\label{eq:Neumanboundcond}
\end{equation}
and with limit values
$$
\begin{cases}
f(0,0)=1, & 
\\
\lim_{(u,v)\to \infty} f(u,v) = 0.
\end{cases}
$$
The same result holds for the escape probability
$$g(u,v)= 1-f(u,v)=\mathbb{P}_{(u,v)} [ T =\infty]$$
but with the following limit values 
$$
\begin{cases}
g(0,0)=0, & 
\\
\lim_{\rems{\|(u,v)\|}\to \infty} f\rems{(u,v)} = 1.
\end{cases}
$$
\label{prop:PDE}
\end{proposition}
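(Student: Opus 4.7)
The plan is to split the argument into existence (checking that the stated $f$ satisfies every condition) and uniqueness (arguing by a martingale identification), the latter being by far the more delicate step. Throughout I will work with $f(u,v)=\mathbb{P}_{(u,v)}[T<\infty]$, since the statement for $g=1-f$ is immediate once the claim for $f$ is established.

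\textbf{Existence.} The continuity of $f$ on $\overline{\mathbb{R}_+^2}\setminus\{(0,0)\}$, and its extension to $f(0,0)=1$, follow from the strong Markov property together with Theorem~\ref{thm:lim0} (which makes $f$ continuous at the corner) and Theorem~\ref{thm:uvtendtoinfty} (which gives the vanishing limit at infinity). The regularity needed to speak of $\mathcal{G}f$ and of the oblique derivatives $\partial_{r_1}f$, $\partial_{r_2}f$ follows from standard elliptic theory for oblique derivative problems applied to the generator $\mathcal{G}$, invoked on bounded subdomains of $\mathbb{R}_+^2\setminus\{(0,0)\}$. Once $f\in C^2$ inside and $C^1$ up to the boundary, the PDE $\mathcal{G}f=0$ and the Neumann conditions \eqref{eq:Neumanboundcond} are obtained as follows: by the strong Markov property the process $M_t:=f(Z(t\wedge T))$ is a bounded martingale, and applying It\^o's formula (in its version for the reflected process, using the representation \eqref{ourprocess}) yields
\begin{equation*}
M_t = f(u,v) + N_t + \int_0^{t\wedge T}\!\mathcal{G}f(Z(s))\,ds + \int_0^{t\wedge T}\!\partial_{r_1}f(0,Z_2(s))\,dl_1(s) + \int_0^{t\wedge T}\!\partial_{r_2}f(Z_1(s),0)\,dl_2(s),
\end{equation*}
where $N$ is a local martingale. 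Since the left-hand side is a martingale, the three finite-variation integrals must vanish almost surely for every starting point; varying $(u,v)$ and using the support properties of $l_1,l_2$ forces both $\mathcal{G}f\equiv 0$ in the interior and the two Neumann conditions on the axes.

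\textbf{Uniqueness.} Let $\tilde f$ be any bounded function with the regularity stated in the proposition that satisfies $\mathcal{G}\tilde f=0$, the conditions \eqref{eq:Neumanboundcond}, and the prescribed limits. Applying the same It\^o decomposition to $\tilde f(Z(t\wedge T))$, every finite-variation term vanishes identically (the interior one by the PDE, the two boundary ones by \eqref{eq:Neumanboundcond}), so $\tilde f(Z(t\wedge T))$ is a bounded local martingale, hence a true martingale. Taking $t\to\infty$ and using the decomposition of the sample space into $\{T<\infty\}$ and $\{T=\infty\}$:
\begin{itemize}
    \item on $\{T<\infty\}$, by definition $Z(t\wedge T)=0$ for $t\ge T$, so $\tilde f(Z(t\wedge T))\to \tilde f(0,0)=1$;
    \item on $\{T=\infty\}$, Theorem~\ref{thm:Ztoinfty} gives $Z(t)\to\infty$, so by the limit hypothesis $\tilde f(Z(t))\to 0$.
\end{itemize}
Boundedness of $\tilde f$ permits dominated convergence in the martingale identity $\tilde f(u,v)=\mathbb{E}_{(u,v)}[\tilde f(Z(t\wedge T))]$, yielding
\begin{equation*}
\tilde f(u,v) = \mathbb{P}_{(u,v)}[T<\infty]\cdot 1 + \mathbb{P}_{(u,v)}[T=\infty]\cdot 0 = f(u,v).
\end{equation*}

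\textbf{Main obstacle.} The step that requires the most care is justifying the It\^o decomposition above: $f$ (and $\tilde f$) are only $C^2$ on $\mathbb{R}_+^2\setminus\{(0,0)\}$, and the process spends positive local time on the axes, so one must use the semimartingale structure of $Z$ given by \eqref{ourprocess} together with a version of It\^o's formula for reflected diffusions in a corner (as in Varadhan--Williams or Taylor--Williams). A standard fix is to localize by stopping at $T_{1/n}\wedge n$, apply It\^o on the domain $\{\|z\|\ge 1/n\}$, and then pass to the limit using Lemma~\ref{lemma:infty1/n} together with Theorem~\ref{thm:lim0} and the boundedness of the integrands, so that the contribution of the excursions near the corner is controlled.
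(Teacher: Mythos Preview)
Your argument is correct and follows essentially the same route as the paper: the uniqueness direction is exactly the paper's Dynkin/It\^o computation followed by martingale convergence using Theorem~\ref{thm:Ztoinfty}, and the existence direction matches the paper's use of the Markov property and the limit Theorems~\ref{thm:uvtendtoinfty} and~\ref{thm:lim0}. The only cosmetic difference is that for the regularity of $f$ and the Neumann boundary conditions the paper cites concrete results of Andres (\cite{andres_pathwise_2009}, Thm~2.2, Cor~2.4, Cor~3.3) and \cite{lipshutz2019}, whereas you invoke ``standard elliptic theory'' and re-derive the Neumann conditions from the It\^o decomposition; both are valid, but pointing to those references would make your existence step cleaner.
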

\rems{Remark that a similar partial differential equation with different limit values could be obtained for the domination probability considered in \cite{fomichov2020probability}.}
\begin{proof}
This proof is inspired by \citet[p. 86-89]{foddy_analysis_1984}.
We assume that $f$  satisfies the hypotheses of the Proposition.
Applying Dynkin's formula, we obtain
\begin{align*}
\mathbb{E}_{(u,v)} [f(Z(t \wedge T))]
&= f(u,v)+ \mathbb{E}_{(u,v)} \int_0^{t \wedge T} \mathcal{G} f(Z(s))\, \mathrm{d}s  
+ \sum_{i=1}^2 \mathbb{E}_{(u,v)} \int_0^{t \wedge T}
\partial_{r_i}f(Z(s)) \de l_i(s)
\\
&=f(u,v).
\end{align*}
But,
\begin{align*}
\mathbb{E}_{(u,v)} [f(Z(t \wedge T)]
 &= \mathbb{E}_{(u,v)} [f(Z(t \wedge T))  \mathds{1}_{T\leqslant t}]
+ \mathbb{E}_{(u,v)} [f(Z(t \wedge T)) \mathds{1}_{T > t}]
\\ &=  f(0,0) \mathbb{P}_{(u,v)} [ T \leqslant t]
+ \mathbb{E}_{(u,v)} [f(Z(t )) \mathds{1}_{T > t}]
\\ & \underset{t\to\infty}{\longrightarrow} \mathbb{P}_{(u,v)} [ T < \infty] 
+\lim_{t\to\infty} \mathbb{E}_{(u,v)} [f(Z(t )) \mathds{1}_{T > t}] 
\\ &= \mathbb{P}_{(u,v)} [ T < \infty] 
.
\end{align*}
Note that  $\underset{|z|\to \infty}{\lim} f(z) = 0$ and that for $T>t$, $Z(t)\underset{t\to\infty }{\rightarrow} \infty $ a.s. By dominated convergence and by Theorem ~\ref{thm:Ztoinfty},
$$\underset{t\to\infty}{\lim} \mathbb{E}_{(u,v)} [f(Z(t )) \mathds{1}_{T > t}] =\mathbb{E}_{(u,v)} [\underset{t\to\infty}{\lim} f(Z(t )) \mathds{1}_{T =\infty}] =0. $$
We may thus conclude that
$$
 f(u,v)=\mathbb{P}_{(u,v)} [ T < \infty].
$$
Conversely, denote $f(u,v):=\mathbb{P}_{(u,v)} [ T < \infty]$. The function $f$ is bounded. By the Markov property, we have
$$
\mathbb{E}_{(u,v)} [f(Z(t \wedge T)\rems{)}]
=f(u,v).
$$
Since $$\mathcal{G}f(\rems{u,v})= \lim_{t\to 0} \frac{1}{t} (\mathbb{E}_{(u,v)} [f(Z(t \wedge T)\rems{)}] -f(u,v))=0,$$ we may conclude that $\mathcal{G}f =0$ on the quarter plane.
The continuity and differentiability properties of $f$ are immediate from \cite[Thm 2.2 and Cor 2.4]{andres_pathwise_2009}. One can also refer to \cite{lipshutz2019} which establishes these properties in a greater generality. The Neumann boundary condition is satisfied by applying \cite[Cor 3.3]{andres_pathwise_2009}. The desired limit values at $0$ and at infinity are obtained by invoking Theorem~\ref{thm:uvtendtoinfty} and Theorem~\ref{thm:lim0}. The result for $g=1-f$ is straightforward, and this completes the proof.
\end{proof}

In preparation for Proposition \ref{prop:functionaleq}, let us define the Laplace transform of the escape probability starting from $(u,v)$ as
$$
\psi(x,y):=\int_0^\infty \int_0^\infty e^{-x u- y v} \mathbb{P}_{(u,v)}[T=\infty] \,\mathrm{d}u \mathrm{d}v.
$$
Further, let
\begin{equation}
\label{eq:defpsi12}
\psi_1(x):=\int_0^\infty e^{-x u} \mathbb{P}_{(u,0)}[T=\infty] \, \mathrm{d}u 
\quad
\text{and}
\quad
\psi_2(y):=\int_0^\infty e^{-y v}\mathbb{P}_{(0,v)}[T=\infty]  \, \mathrm{d}v.
\end{equation}
We also define the kernel
\begin{equation}
K(x,y):=\frac{1}{2}(x^2+y^2+2\rho xy)+\mu_1 x +\mu_2 y\rems{,}
\label{eq:kernel}
\end{equation}
and let
\begin{equation}
k_1(x,y):=\frac{1}{2}(r_2 x +y)+\rho x+\mu_2
,
\quad
k_2(x,y):=\frac{1}{2}(x+r_1 y)+\rho y+\mu_1.
\label{eq:k1k2}
\end{equation}
\indent We now give a functional equation satisfied by the Laplace transform of the escape probability.
\begin{proposition}[Functional equation]
For $(x,y)\in\mathbb{C}^2$ such that $\Re x >0$ and $\Re y >0$ we have
\begin{equation}
K(x,y)\psi(x,y)
=
k_1(x,y) \psi_1(x)
 +
k_2(x,y) \psi_2(y).
\label{eq:functionalequation}
\end{equation}
\label{prop:functionaleq}
\end{proposition}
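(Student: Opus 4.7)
The plan is to derive the functional equation from the PDE characterization of $g(u,v):=\mathbb{P}_{(u,v)}[T=\infty]$ given in Proposition~\ref{prop:PDE}, by multiplying $\mathcal{G}g=0$ by $e^{-xu-yv}$, integrating over $\mathbb{R}_+^2$, integrating by parts, and finally using the Neumann boundary conditions to eliminate normal derivatives along the axes. Throughout I shall use that for $\Re x>0$ and $\Re y>0$ all the integrals involved are absolutely convergent since $g$ is bounded by $1$, and that the relevant boundary terms at infinity vanish for the same reason (combined with the decay estimates on the derivatives of $g$, for example via \cite{andres_pathwise_2009} as already invoked in the proof of Proposition~\ref{prop:PDE}).

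First I would compute the Laplace transform of each partial derivative of $g$ by integration by parts. Denoting temporarily
$$D_1(y):=\int_0^\infty e^{-yv}\partial_u g(0,v)\de v,\qquad D_2(x):=\int_0^\infty e^{-xu}\partial_v g(u,0)\de u,$$
a two-fold integration by parts (once for first-order derivatives, twice for second-order ones), together with $g(0,0)=0$, yields
\begin{align*}
\iint e^{-xu-yv}\partial_u g\,\de u\de v &= x\psi(x,y)-\psi_2(y),\\
\iint e^{-xu-yv}\partial_v g\,\de u\de v &= y\psi(x,y)-\psi_1(x),\\
\iint e^{-xu-yv}\partial_u^2 g\,\de u\de v &= x^2\psi(x,y)-x\psi_2(y)-D_1(y),\\
\iint e^{-xu-yv}\partial_v^2 g\,\de u\de v &= y^2\psi(x,y)-y\psi_1(x)-D_2(x),\\
\iint e^{-xu-yv}\partial_u\partial_v g\,\de u\de v &= xy\psi(x,y)-x\psi_1(x)-y\psi_2(y).
\end{align*}

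Next, I would integrate $\mathcal{G}g\equiv 0$ against $e^{-xu-yv}$ and plug the above identities in. Collecting the coefficient of $\psi(x,y)$ reconstitutes precisely the kernel $K(x,y)$ defined in \eqref{eq:kernel}, so that
\begin{align*}
K(x,y)\psi(x,y)
 = \tfrac{1}{2}\bigl[D_1(y)+D_2(x)\bigr]
 &+\tfrac{1}{2}x\psi_2(y)+\tfrac{1}{2}y\psi_1(x)+\rho x\psi_1(x)+\rho y\psi_2(y)\\
 &+\mu_1\psi_2(y)+\mu_2\psi_1(x).
\end{align*}
The key step is then to remove $D_1$ and $D_2$ using the Neumann conditions \eqref{eq:Neumanboundcond}: since $\partial_u g(0,v)=r_1\partial_v g(0,v)$, a single integration by parts in $v$ (again using $g(0,0)=0$) gives $D_1(y)=r_1 y\,\psi_2(y)$, and symmetrically $D_2(x)=r_2 x\,\psi_1(x)$. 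Substituting and regrouping the resulting terms by $\psi_1(x)$ and $\psi_2(y)$ produces exactly the coefficients $k_1(x,y)$ and $k_2(x,y)$ defined in \eqref{eq:k1k2}, yielding \eqref{eq:functionalequation}.

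The only genuine subtlety lies in the justification of the integration by parts, namely the vanishing of the boundary terms as $u$ or $v\to\infty$. For $g$ itself this is immediate from $0\le g\le 1$ and $\Re x,\Re y>0$. For the first derivatives of $g$ on the axes, I would invoke the regularity results of \cite{andres_pathwise_2009} already used in the proof of Proposition~\ref{prop:PDE}, which guarantee that $\partial_u g(0,\cdot)$ and $\partial_v g(\cdot,0)$ are continuous and bounded (and in fact controlled by constants times $g$), so that $D_1(y)$ and $D_2(x)$ are well defined for $\Re x,\Re y>0$ and no boundary contribution at infinity survives. Once this justification is in place, the calculation above is purely algebraic.
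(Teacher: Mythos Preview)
Your proposal is correct and follows essentially the same approach as the paper: multiply $\mathcal{G}g=0$ by $e^{-xu-yv}$, integrate over the quadrant, integrate by parts, and use the Neumann boundary conditions together with $g(0,0)=0$ to eliminate the normal-derivative terms $D_1,D_2$. The only cosmetic difference is organizational---you precompute the Laplace transforms of each partial derivative and then assemble, while the paper carries out the integration by parts term-by-term in a single chain of displays---but the argument is the same.
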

This functional equation recall\rems{s} the one obtained in~\cite[(32)]{fomichov2020probability} to compute an escape probability along one of the ax\rems{e}s for another range of parameters.
\begin{proof}
Recall the partial differential equation  in Proposition~\ref{prop:PDE} with the oblique Neumann boundary condition and limit values satisfied by $g(u,v):= \mathbb{P}_{(u,v)}[T=\infty]$.
Employing integration by parts yields
\begin{align*}
0&=\int_0^\infty \int_0^\infty e^{-xz_1-yz_2} \mathcal{G} g (z_1,z_2) \de z_1 \dd z_2
\\
0& = 
\int_0^\infty \frac{1}{2} e^{-y z_2} \left( 
-\frac{\partial g}{\partial z_1} (0,z_2) +x \int_0^\infty  
e^{-x z_1} \frac{\partial g}{\partial z_1} (z_1,z_2)
 \de z_1
\right) \de z_2
\\ &+
\int_0^\infty \frac{1}{2} e^{-x z_1} \left( 
-\frac{\partial g}{\partial z_2} (z_1,0) +y \int_0^\infty  
e^{-y z_2} \frac{\partial g}{\partial z_2} (z_1,z_2)
 \de z_2
\right) \de z_1
\\ &+
\int_0^\infty \rho e^{-x z_1} \left( 
-\frac{\partial g}{\partial z_1} (z_1,0) +y \int_0^\infty  
e^{-y z_2} \frac{\partial g}{\partial z_1} (z_1,z_2)
 \de z_2
\right) \de z_1
\\ &+
\int_0^\infty \mu_1 e^{-y z_2}
\left( 
- g (0,z_2) +x \int_0^\infty  
e^{-x z_1}  g (z_1,z_2)
 \de z_1
\right) \de z_2
\\ &+
\int_0^\infty \mu_2 e^{-x z_1}
\left( 
- g (z_1,0) +y \int_0^\infty  
e^{-y z_2}  g (z_1,z_2)
 \de z_2
\right) \de z_1
\\
0& = 
 - \frac{1}{2} r_1 \int_0^\infty  e^{-y z_2} 
\frac{\partial g}{\partial z_2}  (0,z_2) \de z_2 +
\frac{x}{2} \int_0^\infty  e^{-y z_2} \left(  -g(0,z_2) +
x \int_0^\infty  
e^{-x z_1} g (z_1,z_2)
 \de z_1
\right) \de z_2
\\ &- \frac{1}{2} r_2 \int_0^\infty  e^{-x z_1} 
\frac{\partial g}{\partial z_1}  (z_1,0) \de z_1 +
\frac{y}{2} \int_0^\infty  e^{-x z_1} \left(  -g(z_1,0) +
y \int_0^\infty  
e^{-y z_2} g (z_1,z_2)
 \de z_2
\right) \de z_1
\\ &-
\rho\int_0^\infty  e^{-x z_1}  
\frac{\partial g}{\partial z_1} (z_1,0) \de z_1 +
\rho y\int_0^\infty  e^{-y z_2} \left(
-g(0,z_2) + x\int_0^\infty  
e^{-x z_1} g (z_1,z_2)
 \de z_1
\right) \de z_2
\\ &- \mu_1
\int_0^\infty  e^{-y z_2}
 g (0,z_2) \de z_2 + \mu_1 x 
 \int_0^\infty \int_0^\infty  
e^{-x z_1 -y z_2}  g (z_1,z_2)
 \de z_1
 \dd z_2
\\ &- \mu_2
\int_0^\infty  e^{-x z_1}
 g (z_1,0) \de z_1 + \mu_2 y 
 \int_0^\infty \int_0^\infty  
e^{-x z_1 -y z_2}  g (z_1,z_2)
 \de z_1
 \dd z_2
\\
0& =
\left(\frac{1}{2}(x^2+y^2+2\rho xy)+\mu_1 x +\mu_2 y \right) \int_0^\infty \int_0^\infty e^{-xz_1-yz_2}  g (z_1,z_2) \de z_1 \dd z_2
\\ &-
\left(\frac{1}{2}(r_2 x +y)+\rho x+\mu_2 \right)
 \int_0^\infty e^{-xz_1}  g (z_1,0) \de z_1 
\\ &-
\left(\frac{1}{2}(x+r_1 y)+\rho y+\mu_1\right)
\int_0^\infty e^{-yz_2}  g (0,z_2) \de z_2 
\\ 
0&= K(x,y)\psi(x,y)
-
k_1(x,y) \psi_1(x)
-
k_2(x,y) \psi_2(y).
\end{align*}
This concludes the proof.
\end{proof}

\section{Kernel and asymptotics}
\label{sec:kernelasympt}


We begin by studying some properties of the kernel $K$ defined in \eqref{eq:kernel}. Note that this kernel is similar to that in \cite{franceschi_2019} except that in the present paper the drift is positive.
We define the functions $X$ and $Y$ satisfying
$$
K(X(y),y)=0 \quad \text{and}\quad K(x,Y(x))=0.
$$
The branches are given by
\begin{equation}
\begin{cases}
X^\pm (y):=-(\rho y+\mu_1)\pm \sqrt{y^2(\rho^2-1)+ 2y(\mu_1\rho-\mu_2)+\mu_1^2},
\\
Y^\pm (x):=-(\rho x+\mu_2)\pm \sqrt{x^2(\rho^2-1)+ 2x(\mu_2\rho-\mu_1)+\mu_2^2}\rems{,}
\end{cases}
\label{eq:defXYpm}
\end{equation}
and the branch points of $X$ and $Y$ (which are roots of the
polynomials in the square roots of \eqref{eq:defXYpm}) are given, respectively, by
\begin{equation}
\begin{cases}
y^\pm:=\dfrac{\mu_1\rho-\mu_2\pm \sqrt{(\mu_1\rho-\mu_2)^2+\mu_1^2(1-\rho^2)}} {(1-\rho^2)},\\
x^\pm:=\dfrac{\mu_2\rho-\mu_1\pm \sqrt{(\mu_2\rho-\mu_1)^2+\mu_2^2(1-\rho^2)}} {(1-\rho^2)} .
\end{cases}
\label{eq:defxypm}
\end{equation}
By \eqref{eq:defangles} we obtain that
\begin{equation}
y^+=\mu_1 \frac{1-\cos(\beta-\theta)}{\sin\beta \sin(\beta-\theta)}.
\label{eq:y+angle}
\end{equation}
The functions $X^\pm$ and $Y^\pm$ are analytic, respectively, on the cut planes $\mbC\setminus ((-\infty,y^-]\cup [y^+,\infty))$ and $\mbC\setminus ((-\infty,x^-]\cup [x^+,\infty))$. Figure~\ref{fig:kernelBrownian} below depicts the functions $Y^\pm$ on $[x^-,x^+]$.

Recall $k_1$ and $k_2$ as defined in~\eqref{eq:k1k2}. Consider the intersection points between the ellipse $K=0$ and the lines $k_1=0$ and $k_2=0$. We define
\begin{equation}
\label{eq:x0y0}
x_0:= -{2\mu_1}<0
\quad \text{and}\quad
y_0:= -{2\mu_2}<0,
\end{equation}
\begin{equation}
\label{eq:x1}
x_1:= -\frac{2(r_2\mu_2+{\mu_1})} {1+r_2^2+2\rho r_2}<0
\quad \text{and}\quad
y_2:= -\frac{2(r_1 {\mu_1}+{\mu_2})} {1+r_1^2+2\rho r_1}<0.
\end{equation}
These points are represented on Figure~\ref{fig:kernelBrownian} and satisfy the following:
\begin{itemize}
\item $K(x_0,0)=k_2(x_0,0)=0$,  $K(0,y_0)=k_1(0,y_0)=0$. 
\item $\exists y_1 \in\mathbb{R}$ such that $K(x_1,y_1)=k_2(x_1,y_1)=0$\rems{.}
\item $\exists x_2\in\mathbb{R}$ such that  $K(x_2,y_2)=k_1(x_2,y_2)=0$.
\end{itemize}

\begin{figure}[h]
\centering
\includegraphics[scale=0.9]{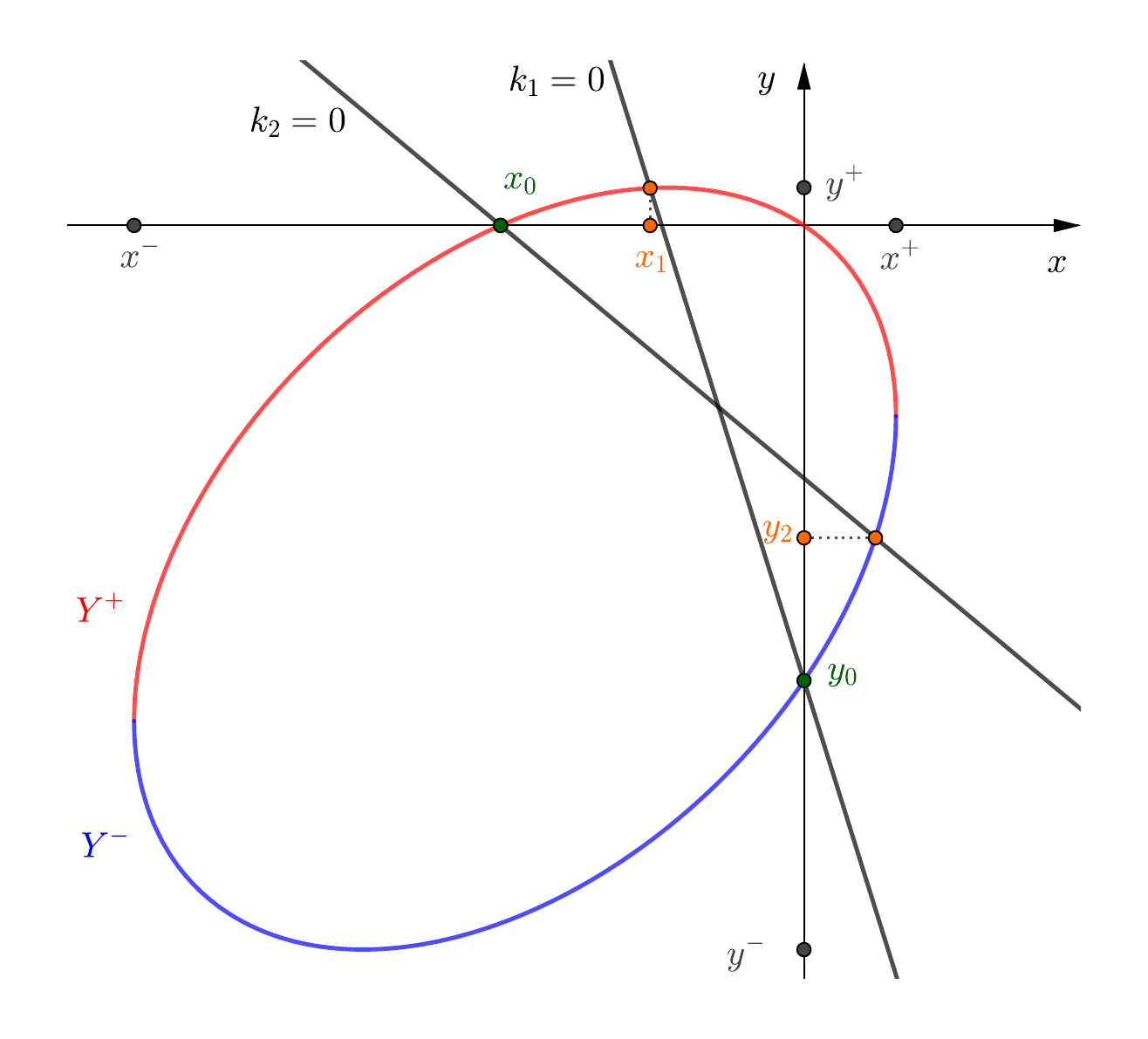}
\caption{The ellipse $K=0$,  the function $Y^-$ in blue, the function $Y^+$ in red, the two lines $k_1=0$ and $k_2=0$, the branch points $x^\pm$ and $y^\pm$, the points $x_0$ and $y_0$ in green, the points $x_1$ and $y_2$ in orange. This figure is drawn for the following parameters: $\mu_1=2$, 
$\mu_2=3$, $\rho=-0.4$, $r_1=2$, $r_2=4$.}
\label{fig:kernelBrownian}
\end{figure}
\noindent Let us define the curve $\mathcal{H}$, which is the boundary of the BVP established in Section~\ref{subsec:BVP}
\begin{equation}
\mathcal{H}= X^\pm([y^+,\infty))=\{x\in\mbC\colon K(x,y)=0 \text{ and } y\in [y^+,\infty)\}.
\label{eq:defHyp}
\end{equation}

\begin{lemma}[Hyperbola $\mathcal{H}$]
\label{lem:hyperbole}
The curve $\mathcal{H}$ is a branch of the hyperbola of equation
\begin{equation}
\label{eq:hyperbole}
(\rho^2-1)x^2+\rho^2 y^2- 2(\mu_1-\rho\mu_2)x= \mu_1(\mu_1-2\rho\mu_2).
\end{equation}
The curve $\mathcal{H}$ is symmetrical with respect to the horizontal axis and is the right branch of the hyperbola if $\rho<0$. Further, it is the left branch if $\rho>0$ and it is a straight line if $\rho=0$.
\end{lemma}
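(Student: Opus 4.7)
The plan is to derive the Cartesian equation of $\mathcal{H}\subset\mathbb{C}$ directly from the defining relation $K(z,w)=0$ with $w\in[y^+,\infty)$, by splitting into real and imaginary parts.

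First I would observe that the polynomial $w^2(\rho^2-1)+2w(\mu_1\rho-\mu_2)+\mu_1^2$ under the square root in \eqref{eq:defXYpm} has negative leading coefficient and vanishes exactly at $w=y^\pm$, hence it is $\leqslant 0$ on $[y^+,\infty)$ (with equality only at $w=y^+$). Thus for $w>y^+$ the two values $X^\pm(w)$ are complex conjugates sharing the real part $-(\rho w+\mu_1)$, which already yields the symmetry of $\mathcal{H}$ about the real axis.

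Next, writing a generic point of $\mathcal{H}$ as $z=x+iy$ (reusing $x,y$ for Cartesian coordinates on $\mathbb{C}$, as in the statement) and expanding $K(z,w)=0$, the imaginary part reduces to $y(x+\rho w+\mu_1)=0$. Off the real axis ($y\neq 0$) and when $\rho\neq 0$, this forces $w=-(x+\mu_1)/\rho$. Substituting into the real part $\tfrac{1}{2}(x^2-y^2+w^2+2\rho xw)+\mu_1 x+\mu_2 w=0$ and multiplying through by $2\rho^2$ yields, after routine algebra,
\[
(\rho^2-1)x^2+\rho^2 y^2-2(\mu_1-\rho\mu_2)x=\mu_1(\mu_1-2\rho\mu_2),
\]
which is \eqref{eq:hyperbole}. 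The branch point $w=y^+$ (where $y=0$) lies on this curve by continuity.

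Finally, the branch/line dichotomy follows from the sign of $\rho$. Since $\mathrm{Re}(X^\pm(w))=-(\rho w+\mu_1)$ starts at the vertex $-(\rho y^++\mu_1)$ and is monotone in $w\in[y^+,\infty)$, it tends to $+\infty$ if $\rho<0$ and to $-\infty$ if $\rho>0$, giving the right branch or the left branch respectively. When $\rho=0$, the imaginary-part equation degenerates to $x=-\mu_1$ directly (with $y$ free), matching the degeneration of \eqref{eq:hyperbole} to $(x+\mu_1)^2=0$. There is no real obstacle here; the only subtlety is the notational reuse of $x,y$ for both the kernel arguments and the Cartesian coordinates on $\mathcal{H}$, which simply requires careful bookkeeping.
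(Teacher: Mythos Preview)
Your derivation is correct. The separation into real and imaginary parts of $K(x+iy,w)=0$ gives exactly the constraint $y(x+\rho w+\mu_1)=0$, and your substitution of $w=-(x+\mu_1)/\rho$ into the real part does produce \eqref{eq:hyperbole} after the routine algebra. The symmetry observation via complex conjugation of $X^\pm(w)$ is clean, and the monotonicity of $\mathrm{Re}\,X^\pm(w)=-(\rho w+\mu_1)$ in $w$ correctly identifies the branch (and also implicitly shows the branches open left--right, since the real part is unbounded along~$\mathcal{H}$). The degenerate case $\rho=0$ is handled properly.

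The paper's own proof, by contrast, is not a derivation at all: it simply refers the reader to \cite[Lemma~4]{franceschi_2019} and \cite[Lemma~9]{baccelli_analysis_1987}, where the analogous computation was carried out for a closely related kernel. Your argument is therefore more self-contained and gives the reader the actual mechanism, at the cost of a short calculation; the paper's citation saves space but relies on access to those references. Your remark about the notational collision between the kernel variables and the Cartesian coordinates on $\mathbb{C}$ is well taken and worth flagging explicitly in any write-up.
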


\begin{proof}
A similar kernel has already been studied; we refer the reader to \cite[Lemma~4]{franceschi_2019} and~\cite[Lemma~9]{baccelli_analysis_1987}, where the equation of such a hyperbola is derived.
\end{proof}

Let $\mathcal{H}^+$ denote the part of the hyperbola $\mathcal{H}$ with positive imaginary part. We also define the domain $\mathcal{G}$ bounded by $\mathcal{H}$ and containing $x^+$. This is depicted in Figure~\ref{fig:hyperbole} below.
\begin{figure}[hbtp]
\centering
\begin{subfigure}{0.32\textwidth}
\includegraphics[scale=1]{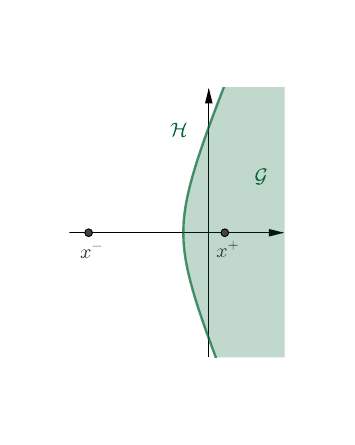}
\caption{$\rho<0$}
\label{fig:gull2}
\end{subfigure}
\begin{subfigure}{0.32\textwidth}
\includegraphics[scale=1]{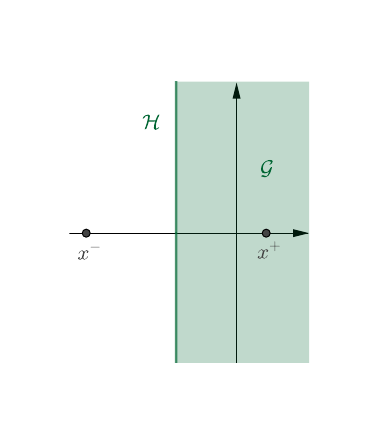}
\caption{$\rho=0$}
\label{fig:tiger2}
\end{subfigure}
\begin{subfigure}{0.32\textwidth}
\includegraphics[scale=1]{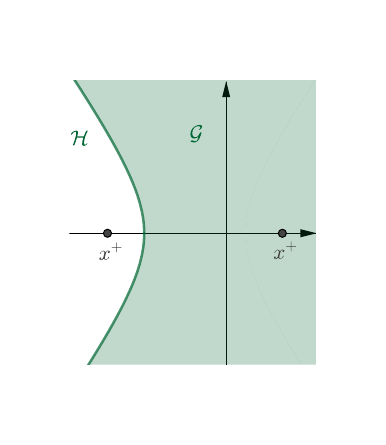}
\caption{$\rho>0$}
\label{fig:mouse2}
\end{subfigure}
\caption{Hyperbola $\mathcal{H}$ and domain $\mathcal{G}$.}
\label{fig:hyperbole}
\end{figure}

\subsection{Meromorphic continuation}

This section focuses on establishing the boundary value problem. We begin by meromorphically continuing the Laplace transform $\psi_1(x)$ (which converges for $x>0$).

\begin{lemma}[Meromorphic continuation]
\label{lem:continuationBrown}
By the formula
\begin{equation}
\label{eq:continuation}
\psi_1(x)=\frac{-k_2(x,Y^+(x))\psi_2(Y^+(x))}{k_1(x,Y^+(x))},
\end{equation}
the Laplace transform $\psi_1(x)$ can be meromorphically continued to the set
\begin{equation}
\label{eq:domainCV}
S:=\{x\in\mbC:\Re x>0 \text{ or } \Re Y^+(x)>0\} \cup\{ 0 \},
\end{equation}
where the domain $\mathcal{G}$ and its boundary $\mathcal{H}$ are included in the set defined in~\eqref{eq:domainCV}. Then $\psi_1$ is meromorphic on $\mathcal{G}$ and is continuous on $\overline{\mathcal{G}}$.
\end{lemma}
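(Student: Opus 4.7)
The plan is to derive formula~\eqref{eq:continuation} from the functional equation of Proposition~\ref{prop:functionaleq} by substituting the branch $y=Y^+(x)$, and then to use standard analytic continuation to extend $\psi_1$ from $\{\Re x>0\}$ to $S$. More precisely, since $K(x,Y^+(x))\equiv 0$ on the cut plane $\mathbb{C}\setminus((-\infty,x^-]\cup[x^+,\infty))$, substituting $y=Y^+(x)$ in~\eqref{eq:functionalequation}, which is initially legitimate for $x$ in the (nonempty) open set $\{\Re x>0\}\cap\{\Re Y^+(x)>0\}$, kills the left-hand side and yields
\[
0 \;=\; k_1(x,Y^+(x))\,\psi_1(x) \;+\; k_2(x,Y^+(x))\,\psi_2(Y^+(x)).
\]
Dividing by $k_1(x,Y^+(x))$ (which is not identically zero) gives~\eqref{eq:continuation} wherever this intersection is nonempty.

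Next, I would observe that the right-hand side of~\eqref{eq:continuation} defines a meromorphic function of $x$ on the whole set $U:=\{x\in\mathbb{C}\setminus((-\infty,x^-]\cup[x^+,\infty)):\Re Y^+(x)>0\}$, because $Y^+$ is analytic on the cut plane (Section~\ref{sec:kernelasympt}), $\psi_2$ is analytic on $\{\Re y>0\}$ as a Laplace transform of a bounded nonnegative function, and $k_1,k_2$ are polynomials. By the principle of analytic continuation, the equality on $\{\Re x>0\}\cap U$ implies that the right-hand side is the unique meromorphic extension of $\psi_1$ to $\{\Re x>0\}\cup U$. The value at $x=0$ is handled by a limiting argument using continuity of the Laplace transform up to the imaginary axis.

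It then remains to verify that $\mathcal{H}\cup\mathcal{G}\subset S$. For $x\in\mathcal{H}$, the definition~\eqref{eq:defHyp} provides $y\in[y^+,\infty)$ with $K(x,y)=0$, so $y=Y^+(x)$; combined with the identity~\eqref{eq:y+angle} and the assumption of positive drift (which forces $\theta\in(0,\beta)$ and hence $y^+>0$), this gives $\Re Y^+(x)\geqslant y^+>0$, so $\mathcal{H}\subset S$. For $x\in\mathcal{G}$, I would argue by connectivity: the domain $\mathcal{G}$ is open and connected, its boundary $\mathcal{H}$ is contained in $S$, and $\mathcal{G}$ contains the boundary point $x^+>0$ which lies in $\overline{\{\Re x>0\}}$; a continuity argument on $\Re Y^+$ along paths in $\mathcal{G}$ then shows $\mathcal{G}\subset\{\Re Y^+(x)>0\}\cup\{\Re x>0\}=S$.

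The main obstacle is locating precisely the set $\{\Re Y^+(x)>0\}$ and showing that it overlaps nontrivially with $\{\Re x>0\}$ (so that the analytic continuation step is legitimate) and covers $\mathcal{G}$; this requires a careful analysis of the level curve $\Re Y^+(x)=0$, which passes through $x=0$ and must be compared with the hyperbola $\mathcal{H}$. Once this geometric picture is established, continuity on $\overline{\mathcal{G}}$ is immediate: on $\mathcal{H}$ the argument $Y^+(x)$ takes values in $[y^+,\infty)\subset\{\Re y>0\}$, where $\psi_2$ is continuous, and $k_1(x,Y^+(x))$ does not vanish (this last point, if not evident, will be addressed by a direct computation using the explicit formulas~\eqref{eq:k1k2} and~\eqref{eq:defXYpm}).
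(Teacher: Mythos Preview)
Your approach is essentially the same as the paper's: both derive~\eqref{eq:continuation} by substituting $y=Y^+(x)$ into the functional equation on the overlap $\{\Re x>0\}\cap\{\Re Y^+(x)>0\}$ and then invoke analytic continuation to extend $\psi_1$ to $S=S_1\cup S_2$. The only noteworthy difference is that the paper does not attempt the connectivity argument you sketch for $\mathcal{G}\cup\mathcal{H}\subset S$; it simply cites \cite[Lemma~5]{franceschi_2019} for this inclusion and illustrates it with Figure~\ref{fig:domainS}, whereas you give a direct (if heuristic) justification via $\Re Y^+(x)\geqslant y^+>0$ on $\mathcal{H}$ and a path argument inside $\mathcal{G}$.
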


\begin{proof}
The Laplace transforms $\psi_1(x)$ and $\psi_2(y)$ are analytic, respectively, on $\{x\in\mbC\colon \Re x >0\}$ and $\{y\in\mbC\colon \Re y >0\}$. The functional equation~\eqref{eq:functionalequation} implies that for $(x,y)$ in the set $\widetilde{S}:=\{(x,y)\in\mathbb{C}^2: \Re x>0 , \ \Re y >0 \text{ and }  \psi(x,y)=0  \}$, we have
\begin{equation}
0=k_1(x,y)\psi_1(x)+k_2(x,y)\psi_2(y).
\label{eq:eqlemcontinuation}
\end{equation}
The open connected set
$$
S_1:=\{x\in\mbC\colon \Re Y^+(x)>0\},
$$
intersects the open set $S_2:=\{x\in\mbC: \Re x>0\}$. For $x\in S_1 \cap S_2$,  $(x,Y^+(x))\in \widetilde{S}$; equation~\eqref{eq:eqlemcontinuation} implies that the continuation formula in~\eqref{eq:continuation} is satisfied for all $x\in S_1 \cap S_2$. Figure~\ref{fig:domainS} represent\rems{s} these sets. With $\psi_1(x)$ defined as in~\eqref{eq:continuation}, we invoke the principle of analytic continuation and meromorphically extend $\psi_1$ to $S=S_1\cup S_2$. Note that the inclusion of $\mathcal{G}$ in the set $S$ defined in~\eqref{eq:domainCV} is similar to that in \cite[Lemma~5]{franceschi_2019}. This inclusion is depicted below in Figure~\ref{fig:domainS}.
\end{proof}

\begin{figure}[h]
\centering
\includegraphics[scale=0.34]{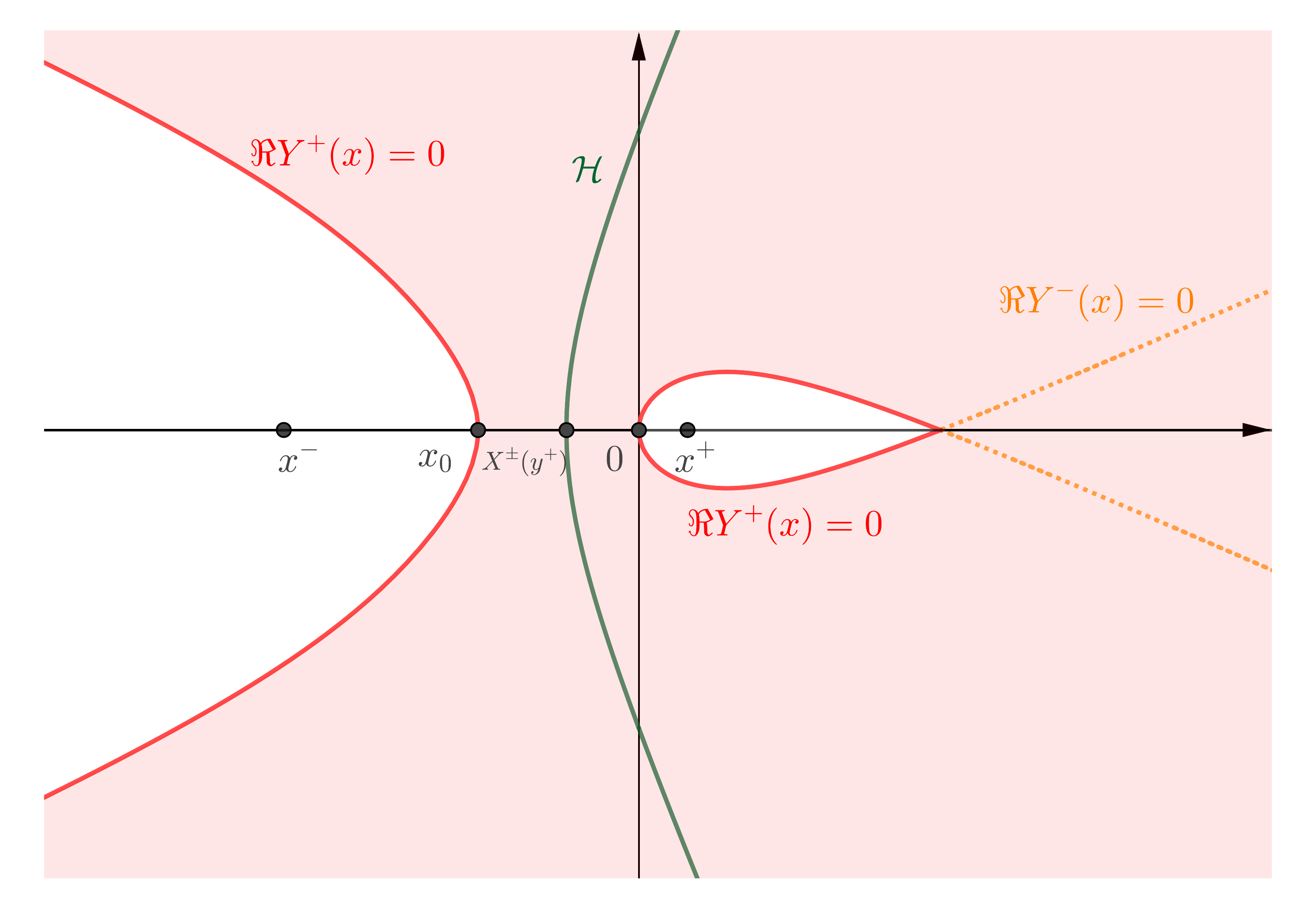}
\caption{The complex plane of $x$. The red curve of equation $\Re Y^+ (x)=0$ bounds the red domain $S_1:=\{x\in\mathbb{C}:\Re Y^+ (x) >0 \}$. The orange dotted curve corresponds to the equation $\Re Y^- (x)=0$. The domain $\mathcal{G}$ is bounded on the left by the green hyperbola $\mathcal{H}$, contains $x^+$ (see Figure~\ref{fig:hyperbole}), and is included in $S=S_1\cup S_2$, where $S_2:=\{x\in\mathbb{C}:\Re x >0 \}$. This figure is drawn for the parameters $\mu_1=2$, 
$\mu_2=3$, $\rho=-0.4$.}
\label{fig:domainS}
\end{figure}

\subsection{Poles and geometric conditions}

\begin{lemma}[Poles]
\label{lem:poleBrownian}
On the set $S$ defined in~\eqref{eq:domainCV}, the Laplace transform $\psi_1$ has either one or two poles, as follows:
\begin{itemize}
\item
(One pole:) If $k_1(x^-,Y^\pm(x^-))\geqslant 0$, the point $0$ is the unique pole of $\psi_1$ in $S$ and this pole is simple.
\item
(Two poles:) If $k_1(x^-,Y^\pm(x^-))<0$, the points $0$ and $x_1$ (defined in \eqref{eq:x1}) are the only possible poles of $\psi_1$ in $S$ and these poles are simple; $x_1\in S$ if and only if $x_1>x_0$.
\end{itemize}
In addition, $\lim_{x\to 0} x \psi_1(x)=1.$ Further, the point $x_1$ is a pole of $\psi_1$ and belongs to the domain $\mathcal{G}$ if and only if
$k_1(X^\pm(y^+),y^+) < 0$.
\end{lemma}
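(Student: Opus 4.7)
The strategy is to analyze the poles of $\psi_1$ in $S$ via the continuation formula from Lemma~\ref{lem:continuationBrown},
\[
\psi_1(x) \;=\; -\,\frac{k_2(x,Y^+(x))\,\psi_2(Y^+(x))}{k_1(x,Y^+(x))}.
\]
Singularities arise from either poles of $\psi_2\circ Y^+$ or zeros of $k_1(x,Y^+(x))$ not cancelled by the numerator.

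For the pole at $0$, an Abelian argument applied to the Laplace integral defining $\psi_2$, combined with $\mathbb{P}_{(0,v)}[T=\infty]\to 1$ as $v\to\infty$ (Theorem~\ref{thm:uvtendtoinfty}), gives $\lim_{y\to 0^+} y\,\psi_2(y)=1$. Implicit differentiation of $K(x,Y^+(x))\equiv 0$ at $(0,0)$ (where $K_x=\mu_1$ and $K_y=\mu_2$) yields $Y^+(x)=-(\mu_1/\mu_2)\,x+\Oh(x^2)$. Since $k_1(0,0)=\mu_2$ and $k_2(0,0)=\mu_1$ are nonzero, the continuation formula gives $\psi_1(x)\sim 1/x$ as $x\to 0$, so the pole is simple and $\lim_{x\to 0}x\psi_1(x)=1$.

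For the dichotomy, substituting the equation $k_1(x,y)=0$ (equivalently $y=-(r_2+2\rho)x-2\mu_2$) into $K(x,y)=0$ yields a quadratic in $x$ with roots $0$ and $x_1=-2(r_2\mu_2+\mu_1)/(1+r_2^2+2\rho r_2)$, so the line $k_1=0$ meets the ellipse at $(0,y_0)$ and $(x_1,y_1)$. Since $y_0=Y^-(0)$, the first intersection lies on the lower branch and does not create a zero of $k_1(x,Y^+(x))$; only the second point is a candidate. Since $k_1(0,Y^+(0))=\mu_2>0$, the intermediate value theorem applied to the continuous map $x\mapsto k_1(x,Y^+(x))$ on $[x^-,0]$ gives a zero in $(x^-,0)$, necessarily equal to $x_1$, if and only if $k_1(x^-,Y^\pm(x^-))<0$. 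This establishes the dichotomy. The pole at $x_1$ (when it exists) is simple because $k_1=0$ is transverse to the ellipse at $(x_1,y_1)$, the two intersections $(0,y_0)$ and $(x_1,y_1)$ being distinct. The upper branch $Y^+$ is positive precisely on $(x_0,0)$ — indeed $Y^+(x)\,Y^-(x)=x(x+2\mu_1)<0$ exactly on this interval — so for real $x_1<0$, $x_1\in S\iff \re Y^+(x_1)>0\iff x_1>x_0$. Finally, no additional pole of $\psi_1$ arises in $S$: any other pole of $\psi_2$ would be negative, and its $Y^+$-preimage would have $\re Y^+<0$, forcing $\re x>0$, where $\psi_1$ is already analytic by its integral definition.

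For the criterion $x_1\in\mathcal{G}$, the hyperbola $\mathcal{H}$ meets $\mathbb{R}$ only at the branch point $X^\pm(y^+)=-(\rho y^++\mu_1)$; since $Y^+(X^\pm(y^+))=y^+>0$, this point lies in $(x_0,0)$. As $\mathcal{G}$ contains $x^+$, we have $\mathcal{G}\cap\mathbb{R}=(X^\pm(y^+),+\infty)$, hence $x_1\in\mathcal{G}\iff x_1>X^\pm(y^+)$. Applying the IVT again to $k_1(x,Y^+(x))$ on $[X^\pm(y^+),0]$ (where $Y^+$ decreases monotonically from $y^+$ to $0$), combined with $k_1(0,0)=\mu_2>0$, shows that the zero $x_1$ lies in $(X^\pm(y^+),0)$ precisely when $k_1(X^\pm(y^+),y^+)<0$. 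Since $X^\pm(y^+)>x_0$, this automatically places $x_1$ in $(x_0,0)\subset S$, making $x_1$ a genuine pole. The main technical obstacle is the careful geometric positioning of $(x_1,y_1)$ relative to the upper branch of the ellipse and to the boundary $\mathcal{H}$; both reduce to intermediate value arguments on well-chosen monotone sub-arcs, exploiting the linearity of $k_1$ and the explicit structure of the ellipse.
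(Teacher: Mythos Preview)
Your proof is correct and follows essentially the same strategy as the paper's: identify the poles in $S$ as zeros of $k_1(x,Y^+(x))$ via the continuation formula, locate those zeros as intersections of the line $k_1=0$ with the ellipse, and use the geometry of the upper branch to translate the sign conditions on $k_1$ at $x^-$ and at $X^\pm(y^+)$ into the existence and location of $x_1$. Your intermediate value arguments and the identity $Y^+(x)Y^-(x)=x(x+2\mu_1)$ spell out what the paper summarizes as ``straightforward calculations''. The only notable difference is the treatment of the pole at $0$: the paper applies the final value theorem directly to $\psi_1$ (using $\mathbb{P}_{(u,0)}[T=\infty]\to 1$ from Theorem~\ref{thm:uvtendtoinfty}), while you route through $\psi_2$ and the continuation formula, which is valid but less direct.
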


\begin{proof}
The final value theorem for the Laplace transform, together with Theorem~\ref{thm:uvtendtoinfty}, imply that
$$\lim_{x\to 0} x \psi_1(x)= \lim_{u\to\infty} \mathbb{P}_{(u,0)} [T=\infty ] =1.$$ We may thus conclude that $0$ is a simple pole.
On the set $\{x \in\mathbb{C}: \Re x>0  \}$, $\psi_1$ is defined as a Laplace transform which converges (and thus has no poles).
Therefore, with the exception of $0$, the only possible poles in $S$ are the zeros of $k_1(x,Y^+(x))$, which are the zeros of the denominator of the continuation formula in~\eqref{eq:continuation}. Straightforward calculations show that equation $k_1(x,Y^+(x))=0$ has either no roots or one (simple) root, and that this depends on the sign of $k_1(x^-,Y^\pm(x^-))$. When the root exists, it is $x_1$ (see \eqref{eq:x1}). The condition for the existence of this root is depicted in Figure~\ref{fig:rootcondition} below. It now only remains to remark that when $x_1$ is a pole, $x_1$ is in $\mathcal{G}$ if and only if $x_1 > X^\pm (y^+)$. The latter holds if and only if $k_1(X^\pm(y^+),y^+) < 0$ (see Figure~\ref{fig:rootconditionbis}).
\end{proof}

\begin{figure}[h]
\centering
\includegraphics[trim={0cm 0.4cm 0cm 0.5cm}, clip,scale=0.63]{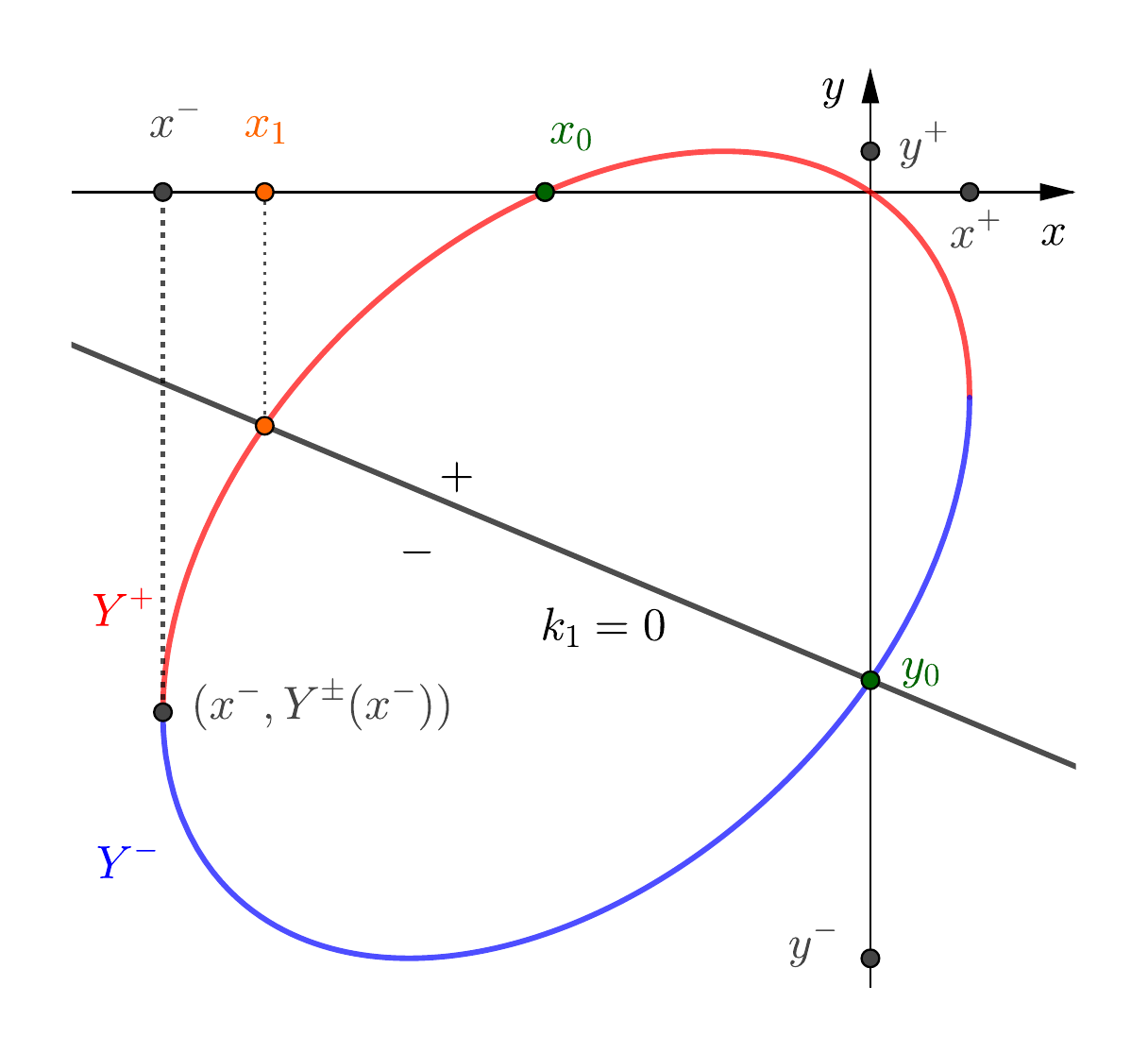}
\includegraphics[trim={0cm 0.4cm 0cm 0.5cm}, clip,scale=0.63]{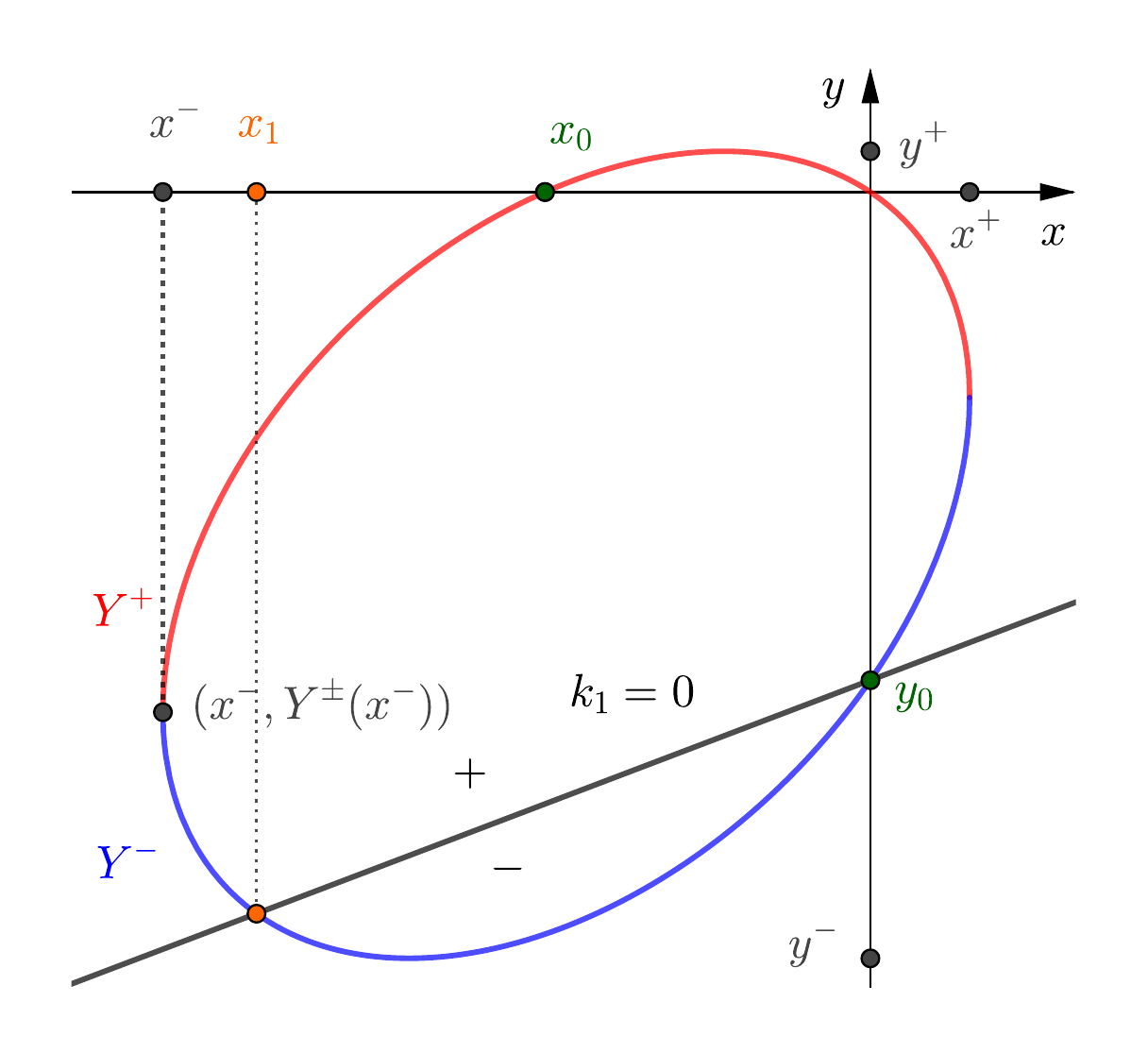}
\caption{On the left, we see that $k_1(x^-,Y^\pm(x_-))<0$ and $x_1$ is a simple pole of $\psi_1$. On the right, we see that $k_1(x^-,Y^\pm(x^-))>0$ and $\psi_1$ has no pole in $S$.}
\label{fig:rootcondition}
\end{figure}

\begin{figure}[h]
\centering
\includegraphics[trim={0cm 0.5cm 0cm 0.5cm}, clip,scale=0.63]{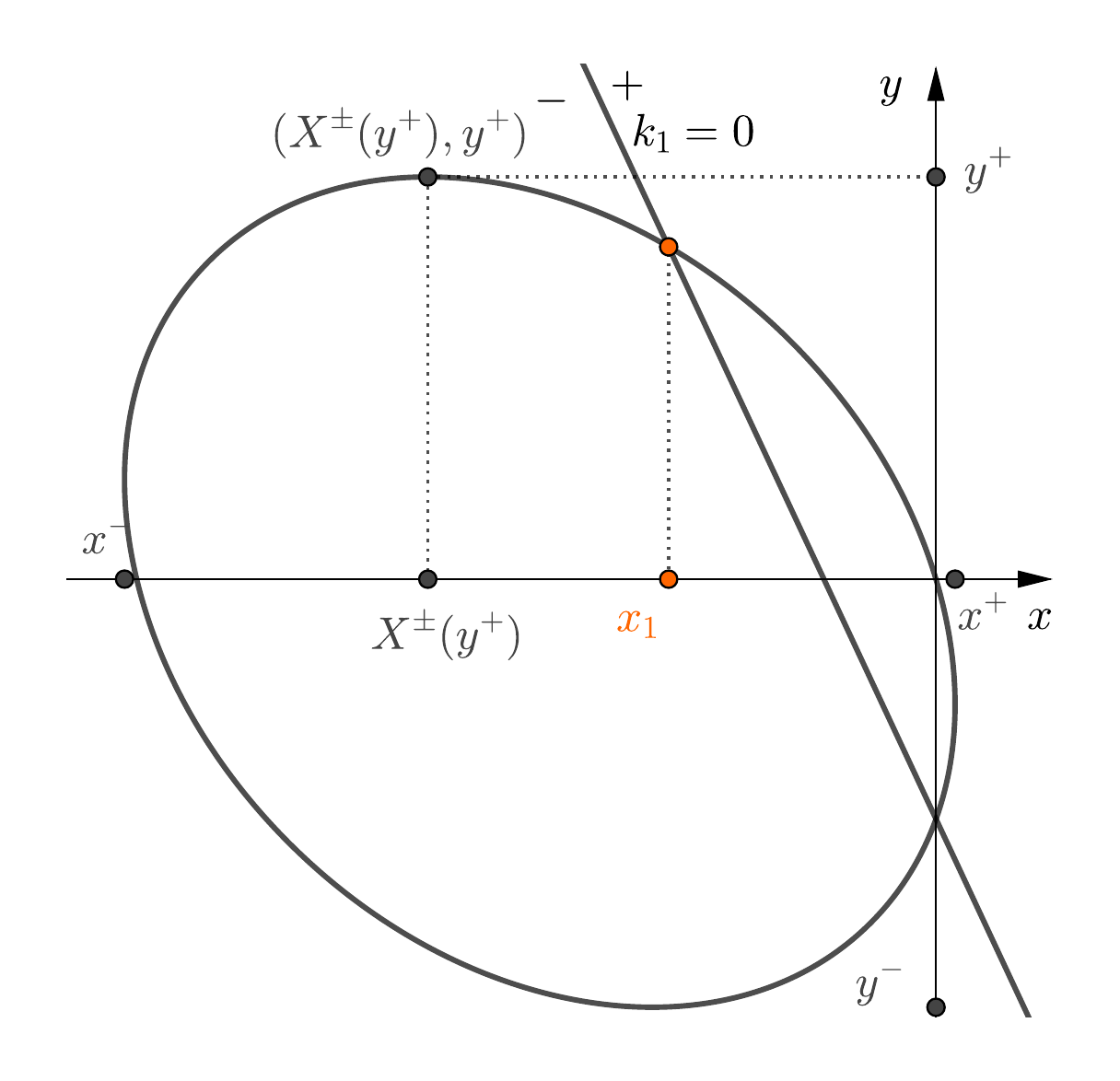}
\includegraphics[trim={0cm 0.5cm 0cm 0.5cm}, clip,scale=0.63]{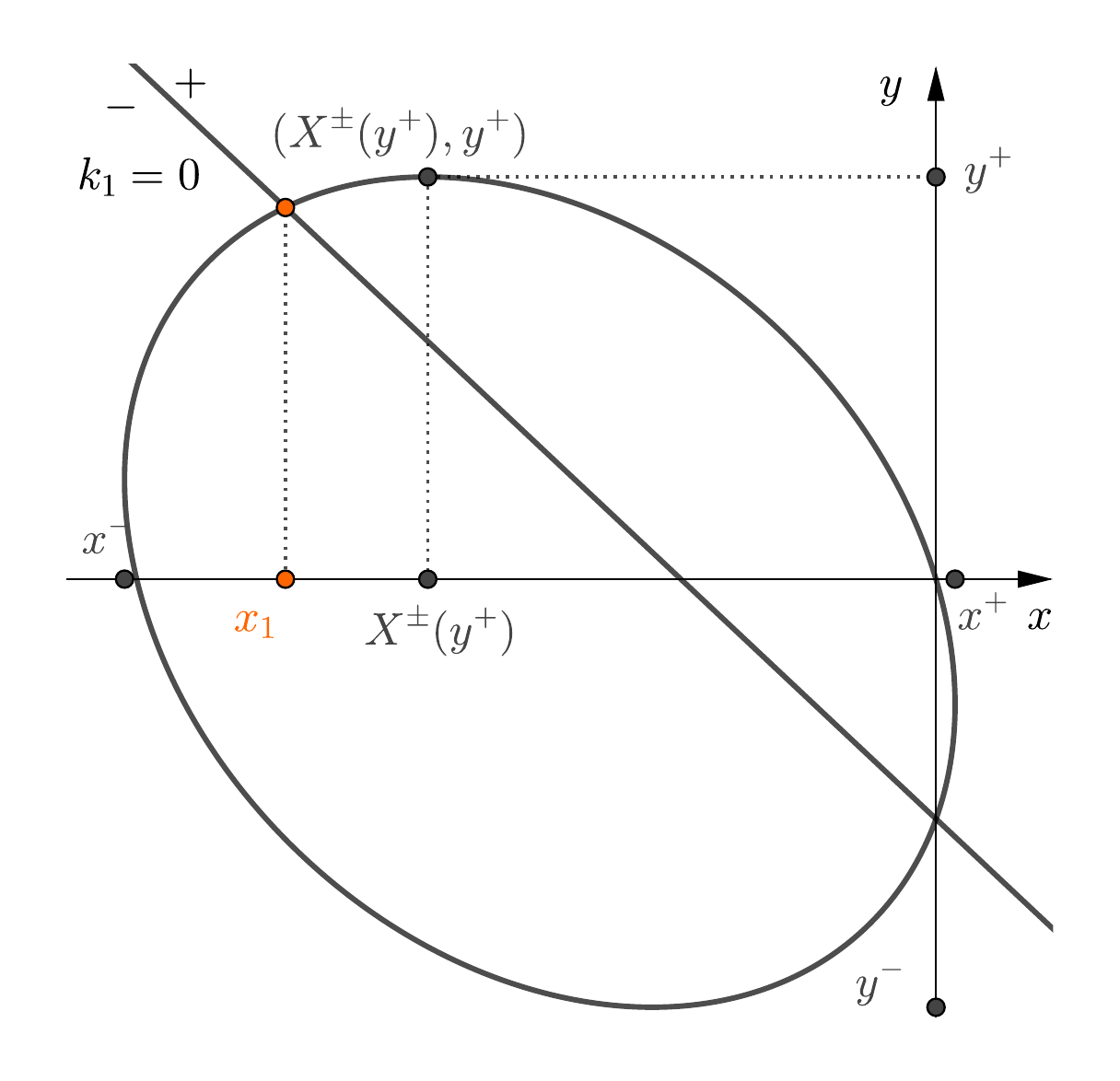}
\caption{On the left, we see that $k_1(X^\pm(y^+),y^+) < 0$ and $x_1$ is in $\mathcal{G}$. On the right, we see that  $k_1(X^\pm(y^+),y^+) > 0$ and $x_1$ is not in $\mathcal{G}$.}
\label{fig:rootconditionbis}
\end{figure}

Before turning to Lemma \ref{lem:condgeom}, recall that the angles $\delta$, $\beta$ and $\theta$ were defined above in \eqref{eq:defangles} and that $k_1$ was defined in \eqref{eq:k1k2}.
\begin{lemma}[Geometric conditions]
The condition $k_1(x^-,Y^\pm(x^-))> 0$ (resp. $=0$ and $<0$) is equivalent to 
$$
2\delta -\theta < \pi\rems{,}
$$
(resp. $=\pi$ and $> \pi$). 
The condition $k_1(X^\pm(y^+),y^+) > 0$ (resp. $=0$ and $<0$) is equivalent to 
$$
2\delta -\theta +\beta< 2\pi\rems{,}
$$
(resp. $=2\pi$ and $>2\pi$).
\label{lem:condgeom}
\end{lemma}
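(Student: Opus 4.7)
The strategy is to evaluate both sides of each claimed equivalence in closed form as trigonometric functions of $\delta,\theta,\beta$ and then read off the sign.

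First, at the branch points $x^-$ and $y^+$, the two branches of the algebraic functions coincide: the discriminants in \eqref{eq:defXYpm} vanish, so $Y^\pm(x^-)=-(\rho x^-+\mu_2)$ and $X^\pm(y^+)=-(\rho y^++\mu_1)$. Substituting into \eqref{eq:k1k2} collapses both expressions to
\begin{equation*}
k_1(x^-,Y^\pm(x^-))=\tfrac12\bigl((r_2+\rho)x^-+\mu_2\bigr),
\qquad
k_1(X^\pm(y^+),y^+)=-\bigl(\tfrac{r_2}{2}+\rho\bigr)(\rho y^++\mu_1)+\tfrac{y^+}{2}+\mu_2.
\end{equation*}

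Next I would convert to angles via \eqref{eq:defangles}. Since $\cos\beta=-\rho$, one has $r_2+\rho=-\sin\beta\cot\delta$, and standard half-angle manipulations applied to \eqref{eq:defxypm} (using $\sqrt{(\mu_2\rho-\mu_1)^2+\mu_2^2(1-\rho^2)}=\mu_2\sin\beta/\sin\theta$ and $\mu_2\rho-\mu_1=-\mu_2\sin\beta\cot\theta$) give
\begin{equation*}
x^-=-\frac{\mu_2}{\sin\beta\tan(\theta/2)},
\qquad
y^+=\frac{\mu_2\bigl(1-\cos(\beta-\theta)\bigr)}{\sin\beta\sin\theta},
\end{equation*}
the second being equivalent to \eqref{eq:y+angle} via the law-of-sines identity $\mu_1\sin\theta=\mu_2\sin(\beta-\theta)$ arising from the wedge parametrization of the drift. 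Plugging these into the preceding display and simplifying---with $\cos A\cos B+\sin A\sin B=\cos(A-B)$ for the first expression, and for the second the chain $\cos(\beta-\theta)\cos(\beta+\delta)+\sin(\beta-\theta)\sin(\beta+\delta)=\cos(\theta+\delta)$, followed by $2\sin\theta\sin\delta=\cos(\theta-\delta)-\cos(\theta+\delta)$ and $\cos A-\cos B=-2\sin\tfrac{A+B}{2}\sin\tfrac{A-B}{2}$---should produce the compact forms
\begin{equation*}
k_1(x^-,Y^\pm(x^-))=\frac{\mu_2\cos(\delta-\theta/2)}{2\sin\delta\sin(\theta/2)},
\qquad
k_1(X^\pm(y^+),y^+)=\frac{\mu_2\sin\bigl(\tfrac{\theta+\beta}{2}\bigr)\sin\bigl(\tfrac{2\delta-\theta+\beta}{2}\bigr)}{\sin\theta\sin\delta}.
\end{equation*}

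All prefactors above are strictly positive because $\delta,\theta,\beta\in(0,\pi)$ and $\mu_2>0$. Condition \eqref{eq:conditionRmatrix} forces $\beta<\delta$, and the positivity of $\mu_1,\mu_2$ forces the drift to point into the interior of the wedge, so that $\theta<\beta$ (indeed, the oriented distance from the wedge drift to the upper edge equals $\mu_1>0$). This places $\delta-\theta/2\in(\beta/2,\pi)$ and $(2\delta-\theta+\beta)/2\in(\beta,3\pi/2)$; on these intervals $\cos$ vanishes exactly at $\pi/2$ and $\sin$ vanishes exactly at $\pi$, yielding the two stated equivalences at once. The main obstacle is the cascade of product-to-sum identities producing the second closed form: every individual step is elementary but the bookkeeping is lengthy, and the angular ranges (especially the inequality $\theta<\beta$) must be tracked carefully to avoid sign ambiguities in the final reading.
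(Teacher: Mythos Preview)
Your closed forms are correct. I verified both: with $r_2+\rho=-\sin\beta\cot\delta$ and $x^-=-\mu_2/(\sin\beta\tan(\theta/2))$ one gets $k_1(x^-,Y^\pm(x^-))=\tfrac{\mu_2}{2}\,\tfrac{\cos(\delta-\theta/2)}{\sin\delta\sin(\theta/2)}$, and for the second expression the key identity is $\sin(\beta+\delta)\sin\tfrac{\beta-\theta}{2}+\sin\delta\sin\tfrac{\beta+\theta}{2}=\sin\beta\,\sin\!\bigl(\delta+\tfrac{\beta-\theta}{2}\bigr)$, which is an elementary subtraction after writing $\beta=\tfrac{\beta-\theta}{2}+\tfrac{\beta+\theta}{2}$. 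The angular ranges $\delta-\theta/2\in(\beta/2,\pi)$ and $(2\delta-\theta+\beta)/2\in(\beta,3\pi/2)$ are exactly right, and on these intervals the relevant zero is unique, so the sign readings are clean.

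This is a genuinely different route from the paper's. The paper argues the first equivalence by a chain of algebraic manipulations on the inequality $r_2+\rho<-\mu_2/x^-$, arriving at $\sin(2\delta-\theta)>0$; that chain only works for $\delta\geqslant\pi/2$ (a squaring step requires $-\cot\delta\geqslant 0$), so the case $\delta<\pi/2$ is handled separately by observing that both sides of the claimed equivalence are automatic. The second equivalence is then dispatched by symmetry without details. Your approach avoids the case split altogether and treats both statements uniformly by computing the two values of $k_1$ in closed form. This buys you more: you get the actual magnitudes, not just the signs, and the argument is self-contained for both parts. The price is that the product-to-sum bookkeeping for the second formula is longer than any single step in the paper's chain; you flagged this, and it is the only place that needs care in writing up.
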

\begin{proof}
By condition \eqref{eq:conditionRmatrix} and by the fact that the drift is positive, we have $0<\theta<\beta<\delta<\pi$. By \eqref{eq:defangles} and \eqref{eq:defxypm}, 
\begin{equation}
x^-/\mu_2 = \frac{1}{\sqrt{1-\rho^2}} \left(
\frac{\rho-\mu_1/\mu_2}{\sqrt{1-\rho^2}}
- \sqrt{\left( \frac{\rho-\mu_1/\mu_2}{\sqrt{1-\rho^2}} \right)^2  +1 } 
\right)
=\frac{-\cot(\theta)
- \sqrt{\cot^2 (\theta)  +1 } }{\sin(\beta)}.
\label{eq:mu2x}
\end{equation}
We begin by proving the first equivalence for $\delta\geqslant \pi/2$. In this case we have
\begin{align*}
k_1(x^-,Y^\pm(x^-))> 0
& \Leftrightarrow
\frac{1}{2}(r_2x^- +Y^\pm(x^-))+\rho x^-+\mu_2>0
\\ & \Leftrightarrow 
r_2+\rho < -\mu_2/x^- \text{ since } Y^\pm(x^-)=-\rho x^- -\mu_2 \text{ by } \eqref{eq:defXYpm} \text{ and } \eqref{eq:defxypm}
\\ & \Leftrightarrow 
r_2-\cos(\beta)<  \sin (\beta) \left(\cot(\theta)+\sqrt{\cot^2(\theta)+1}\right)^{-1} \text{ by } \eqref{eq:mu2x}
\\ & \Leftrightarrow
-\cot(\delta) \left(\cot(\theta)+\sqrt{\cot^2(\theta)+1}\right) <1
\\ & \Leftrightarrow
0<-\cot(\delta)\sqrt{\cot^2(\theta)+1}<1+\cot(\delta)\cot(\theta)     \text{ since we assumed } \delta \geqslant \pi/2
\\ & \Leftrightarrow
\cot^2(\delta)({\cot^2(\theta)+1})<(1+\cot(\delta)\cot(\theta)   )^2
\\ & \Leftrightarrow
2\cot(\delta)\cot (\theta) - \cot^2 (\delta)+1  > 0 
\\ & \Leftrightarrow
2\sin(\delta)\cos(\delta)\cos (\theta) - (\cos^2 (\delta)-\sin^2 (\delta)) \sin(\theta) > 0 
\\ & \Leftrightarrow
\sin(2\delta)\cos (\theta) - \cos (2\delta) \sin(\theta) > 0 
\\ & \Leftrightarrow
\sin(2\delta -\theta) > 0 
\\ & \Leftrightarrow 
2\delta -\theta < \pi \text{ since } 0<2\delta -\theta< 2\pi.
\end{align*}
It is straightforward to see that if $\delta<\pi/2$, then $2\delta -\theta < \pi $. Further, $\delta<\pi/2$ is equivalent to $r_2+\rho <0$ by \eqref{eq:defangles}, which implies that $r_2+\rho < -\mu_2/x^- $. Therefore, $k_1(X^\pm(y^+),y^+) < 0$. This proves the first equivalence. The second equivalence is proved in exactly the same way, so the details are omitted. This concludes the proof.
\end{proof}

\subsection{Absorption \rems{a}symptotics along the axes}
In this section, we establish asymptotic\remst{s} results for the absorption probability (and escape probability) in a simpler case where the starting point is $(u,0)$.
\begin{proposition}[Absorption asymptotics]
\label{cor:asymptBrown}
Let us assume that $x^- \in S$. For some constant $C$, the asymptotic behavior of $\mathbb{P}_{(u,0)}[T<\infty]$ as $u\to\infty$ is given by
$$
\mathbb{P}_{(u,0)}[T<\infty]\sim C
\begin{cases}
e^{ux_1} & \text{ if } 
2\delta -\theta > \pi, \\
u^{-\frac{3}{2}}e^{ux^-} & \text{ if } 
2\delta -\theta < \pi, \\
u^{-\frac{1}{2}}e^{ux^-} & \text{ if } 
2\delta -\theta = \pi.
\end{cases}
$$
\end{proposition}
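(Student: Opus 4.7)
The plan is to extract asymptotics of $f(u) := \mathbb{P}_{(u,0)}[T<\infty]$ from the singularities of its Laplace transform $\hat{f}(x) = 1/x - \psi_1(x)$ by Bromwich inversion combined with contour deformation into the left half-plane. By Lemmas \ref{lem:continuationBrown}, \ref{lem:poleBrownian}, and \ref{lem:condgeom}, the function $\psi_1$ extends meromorphically to $S$, and the three regimes of the proposition correspond to three different dominant singularities on the real axis to the left of $0$. Starting from
\begin{equation*}
f(u) = \frac{1}{2\pi i}\int_{c-i\infty}^{c+i\infty}\left(\frac{1}{x} - \psi_1(x)\right) e^{xu}\, dx,
\end{equation*}
valid for any $c > 0$ since $f$ is bounded, I would shift the contour across the imaginary axis; the residue of the integrand at $0$ vanishes because $\operatorname{Res}_0\psi_1 = 1$, so the remaining integral is determined by the singularities of $\psi_1$ with negative real part nearest to the imaginary axis.

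In Case 1, when $2\delta - \theta > \pi$, Lemmas \ref{lem:poleBrownian} and \ref{lem:condgeom} guarantee that $\psi_1$ has a simple pole at $x_1 \in (x^-,0)$. Shifting the contour past $x_1$ and up to a contour surrounding the branch cut at $x^-$ picks up the residue term $-\operatorname{Res}_{x_1}\psi_1 \cdot e^{x_1 u}$, and what remains can be bounded by $O(u^{-3/2} e^{x^- u})$, which is exponentially smaller since $x_1 > x^-$. This yields $f(u) \sim C e^{x_1 u}$. In Cases 2 and 3, $\psi_1$ has no pole in $(x^-,0)$, so the dominant singularity is the branch point at $x^-$ and I would deform the contour to wrap around the cut. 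Using the local expansion obtained from \eqref{eq:defXYpm},
\begin{equation*}
Y^+(x) - Y^+(x^-) = \sqrt{(1-\rho^2)(x^+ - x^-)}\,\sqrt{x - x^-} + O(x - x^-),
\end{equation*}
together with the continuation formula \eqref{eq:continuation}, the local structure of $\psi_1$ near $x^-$ is transparent. In Case 2, $k_1(x^-,Y^+(x^-)) \neq 0$ so $\psi_1(x) = A + B\sqrt{x - x^-} + O(x - x^-)$; a standard transfer theorem for square-root singularities (Watson's lemma type) gives $f(u) \sim C u^{-3/2} e^{x^- u}$. In Case 3, $k_1(x^-,Y^+(x^-)) = 0$, meaning $k_1(x, Y^+(x)) \sim c\sqrt{x - x^-}$, so the square root moves into the denominator and $\psi_1(x) \sim D/\sqrt{x - x^-}$, yielding $f(u) \sim C' u^{-1/2} e^{x^- u}$.

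The main obstacle will be twofold. First, one must rigorously justify the contour deformation by controlling $\psi_1$ on large arcs and on vertical segments near $\infty$ in the left half-plane; the key input is that $|Y^+(x)|$ grows linearly in $|x|$ in the relevant region, so $\psi_2 \circ Y^+$ decays as needed. Second, one must verify non-vanishing of the leading constants and, in particular, handle Case 3 as the coalescence of the pole $x_1$ with the branch point $x^-$ (as $2\delta - \theta$ crosses $\pi$, the pole $x_1$ identified in Lemma \ref{lem:poleBrownian} passes through $x^-$), which is what produces the intermediate exponent $-1/2$ between the pure exponential and the $u^{-3/2}$ regime.
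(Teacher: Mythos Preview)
Your proposal is correct and takes essentially the same approach as the paper: identify the rightmost singularity of $1/x - \psi_1(x)$ (either the simple pole $x_1$ or the branch point $x^-$, with exactly the local expansions you wrote down) and read off the asymptotics. The only difference is that the paper invokes a black-box transfer theorem \cite[Theorem~37.1]{doetsch_introduction_1974} in place of the Bromwich inversion and contour deformation you sketch.
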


\begin{proof}
The largest singularity of the Laplace transform of $\mathbb{P}_{(u,0)}[T<\infty]$ determines its asymptotics. We proceed by invoking a classical transfer theorem, see ~\cite[Theorem~37.1]{doetsch_introduction_1974}. This theorem says that if $a$ is the largest singularity of order $k$ of the Laplace transform (that is, the Laplace transform behaves as $(s-a)^{-k}$ up to additive and multiplicative constants in the neighborhood of $a$), then when $u\to\infty$, the probability $\mathbb{P}_{(u,0)}[T<\infty]$ is equivalent (up to a constant) to $u^{k-1}e^{au}$. The Laplace transform of $\mathbb{P}_{(u,0)}[T<\infty]$ is $1/x-\psi_1(x)$. By Lemma~\ref{lem:poleBrownian}, the point $0$ is not a singularity and the point $x_1$ is a simple pole. 
When that pole exists, the asymptotics are given by $Ce^{u x_1}$ for some constant $C$. When there is no pole, that is, when $k_1(x^-,Y^\pm(x^-))\ges 0$, the largest singularity is given by the branch point $x^-$. The definition of $Y^+$ and ~\eqref{eq:continuation} together imply that for some constants $C_i$ we have
\[
\psi_1(x)\underset{x\to x^-}{=}
\begin{cases}
C_1+C_2\sqrt{x-x^-}+\Oh(x-x^-) & \text{ if } k_1(x^-,Y^\pm(x^-))>0,\\
\dfrac{C_3}{\sqrt{x-x^-}}+\Oh(1) & \text{ if } k_1(x^-,Y^\pm(x^-))=0.
\end{cases}
\]
The proof is then completed by applying  Lemma~\ref{lem:condgeom} and invoking the classical transfer theorem.
\end{proof}

\begin{remark}[Asymptotics along the vertical axis]
Studying the singularities of $\phi_1$ we obtained in Proposition~\ref{cor:asymptBrown} the asymptotics of the absorption probability (and then of the escape probability which is equal to $1-\mathbb{P}_{(u,0)}[T<\infty]$) along the horizontal axis. A similar study for $\psi_2$ would lead to the following asymptotics along the vertical axis. As $v\to\infty$
$$
\mathbb{P}_{(0,v)}[T<\infty]\sim C
\begin{cases}
e^{vy_2} & \text{ if } 
2\epsilon +\theta-\beta  > \pi, \\
v^{-\frac{3}{2}}e^{vy^-} & \text{ if } 
2\epsilon +\theta-\beta < \pi, \\
v^{-\frac{1}{2}}e^{vy^-} & \text{ if } 
2\epsilon +\theta-\beta = \pi.
\end{cases}
$$
\end{remark}

\rems{\begin{remark}[Bivariate asymptotics]
The bivariate asymptotics of the absorption probability could be derived using the saddle point method and studying the singularities, see \cite{Franceschi2017} and \cite{ernst_2020}. Such a study is very technical and requires to distinguish a lot of different cases. We would obtain some functions $a$, $b$, $c$ depending on the parameters, such that for $(u,v)=(r\cos(t),r\sin(t))$ in polar coordinates, 
$$
\mathbb{P}_{(u,v)}[T<\infty]\underset{r\to\infty}{\sim} a(t) r^{b(t)} e^{-c(t) r}.
$$
Typically $b$ would take the values $0$ or $-1/2$.
\end{remark}}


\section{Product form and exponential absorption probability}
\label{sec:productform}

In this section, we consider a remarkable geometric condition on the parameters characterizing the case where the absorption probability has a product form and is exponential. We call this new criterion the \textit{dual skew symmetry} condition due to its natural connection with the famous skew symmetry condition studied by Harrison, Reiman and Williams \cite{harrison_reiman_1981,harrison_multidimensional_1987}, which characterizes the cases where the stationary distribution has a product form and is exponential. The \textit{dual skew symmetry} condition  gives a criterion for the solution to the partial differential equation of Proposition~\ref{prop:PDE} (dual to that satisfied by the invariant measure) to be of product form. The following Theorem state\rems{s} a simple geometric criterion on the parameters for the absorption probability to be of product form; the absorption probability is then exponential. 

\begin{theorem}[Dual skew symmetry]
Let $f(u,v)=\mathbb{P}_{(u,v)} [T<\infty]$ be the absorption probability.
The following statement\rems{s} are equivalent:
\begin{enumerate}
\item \label{item11} The absorption probability has a product form, i.e. \rems{there} exi\rems{s}t\remst{s} $f_1$ and $f_2$ such that $$f(u,v)=f_1(u)f_2(v) ;$$
\item \label{item22} The absorption probability is exponential, i.e. there exist\remst{s} $x$ and $y$ in $\mathbb{R}$ such that $$f(u,v)=e^{ux+vy};$$
\item \label{item33} The reflection vectors are in opposite directions, i.e. 
$$r_1r_2=1 ;$$
\item \label{item44} The reflection angles in the wedge satisfy $\alpha=1$, i.e.
$$\delta+\epsilon-\beta=\pi.$$
\end{enumerate}
In this case we have
$$
f(u,v)= e^{ux_1+vy_2}
$$
where $x_1$ and $y_2$ are given in \eqref{eq:x1}.
\label{thm:productform}
\end{theorem}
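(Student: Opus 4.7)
The plan is to prove the chain of implications $(2) \Rightarrow (1) \Rightarrow (2)$, $(2) \Leftrightarrow (3)$, $(3) \Leftrightarrow (4)$, and then exhibit the explicit exponential form. The backbone will be the uniqueness statement in Proposition~\ref{prop:PDE}: once a candidate exponential satisfies the PDE, both Neumann boundary conditions, and the two limit values, it must be equal to $f$.

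The equivalence $(3) \Leftrightarrow (4)$ is a direct trigonometric identity. Using \eqref{eq:defangles}, I would write $\cot\delta = (\cos\beta - r_2)/\sin\beta$ and $\cot\epsilon = (\cos\beta - r_1)/\sin\beta$, then apply the addition formula $\cot(\delta+\epsilon) = (\cot\delta\cot\epsilon - 1)/(\cot\delta+\cot\epsilon)$. Requiring $\delta+\epsilon = \beta+\pi$ means $\cot(\delta+\epsilon) = \cot\beta$, and after clearing denominators and using $\sin^2\beta+\cos^2\beta=1$ the condition collapses exactly to $r_1 r_2 = 1$.

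The implication $(2) \Rightarrow (1)$ is immediate. For $(1) \Rightarrow (2)$, I would plug $f(u,v) = f_1(u)f_2(v)$ into the oblique Neumann boundary condition \eqref{eq:Neumanboundcond} at $u=0$ to get $f_1'(0) f_2(v) = r_1 f_1(0) f_2'(v)$ for every $v>0$; this forces $f_2'/f_2$ to be the constant $y := f_1'(0)/(r_1 f_1(0))$, so $f_2(v) = f_2(0) e^{yv}$. Symmetrically, the boundary condition at $v=0$ forces $f_1$ to be exponential, and $f(0,0)=1$ fixes the multiplicative constant to $1$, giving $f(u,v) = e^{ux+vy}$.

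For $(2) \Rightarrow (3)$: inserting $f(u,v) = e^{ux+vy}$ into $\mathcal{G}f = 0$ yields $K(x,y)=0$, while the two Neumann conditions give $x = r_1 y$ and $y = r_2 x$. Since the limit value $\lim_{\|(u,v)\|\to\infty} f(u,v) = 0$ forces $x,y<0$ (hence both nonzero), combining the two linear relations produces $x = r_1 r_2 \, x$, whence $r_1 r_2 = 1$. For the converse $(3) \Rightarrow (2)$, assume $r_1 r_2 = 1$. I would take the candidate $g(u,v) := e^{u x_1 + v y_2}$ with $x_1,y_2$ as in \eqref{eq:x1}. A short algebraic check shows that when $r_1r_2=1$ one has $y_2 = r_2 x_1$ and $x_1 = r_1 y_2$ (as verified at the very end of Section~\ref{sec:kernelasympt} by direct substitution), so $k_1(x_1,y_2) = 0 = k_2(x_1,y_2)$, and simultaneously $K(x_1,y_2)=0$ because $(x_1,y_2)$ lies on the ellipse at the intersection with both lines. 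Consequently, $g$ satisfies $\mathcal{G}g=0$, both oblique Neumann boundary conditions, $g(0,0)=1$, and $g(u,v)\to 0$ at infinity (since $x_1,y_2<0$). Uniqueness in Proposition~\ref{prop:PDE} then forces $f=g$, which simultaneously establishes $(2)$ and identifies the explicit form $f(u,v)=e^{ux_1+vy_2}$ claimed in the theorem.

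The main obstacle is the small algebraic computation in $(3) \Rightarrow (2)$ showing that $(x_1,y_2)$ is a common zero of $K$, $k_1$ and $k_2$ precisely under $r_1 r_2 = 1$, and the verification that $x_1,y_2<0$; everything else is either a routine trigonometric identity or a direct consequence of the PDE characterization of Proposition~\ref{prop:PDE}.
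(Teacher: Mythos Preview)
Your overall architecture matches the paper's proof exactly: same cycle of implications, same use of the Neumann conditions to force exponentials, same appeal to the uniqueness in Proposition~\ref{prop:PDE} for $(3)\Rightarrow(2)$, and the same trigonometric reduction for $(3)\Leftrightarrow(4)$.

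There is, however, a genuine slip in your $(3)\Rightarrow(2)$ step. From $x_1=r_1y_2$ and $y_2=r_2x_1$ you conclude ``$k_1(x_1,y_2)=0=k_2(x_1,y_2)$'' and then deduce $K(x_1,y_2)=0$ from this. That inference is wrong: the Neumann conditions for $e^{ux+vy}$ are $x-r_1y=0$ and $y-r_2x=0$, whereas $k_1(x,y)=\tfrac12(r_2x+y)+\rho x+\mu_2$ and $k_2(x,y)=\tfrac12(x+r_1y)+\rho y+\mu_1$ are different affine functions. In fact, under $r_1r_2=1$ one computes $k_2(x_1,y_2)=(r_1+\rho)y_2+\mu_1$, which is not zero in general, so your claimed intersection with ``both lines'' does not exist and your justification of $K(x_1,y_2)=0$ collapses.

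The fix is exactly what the paper does: verify $K(x_1,y_2)=0$ by direct substitution. Using $x_1/y_2=r_1$ (which you have already established) and the definition of $y_2$ in \eqref{eq:x1},
\[
K(x_1,y_2)=y_2\Bigl(\tfrac{y_2}{2}\bigl(r_1^2+1+2\rho r_1\bigr)+\mu_1 r_1+\mu_2\Bigr)=0.
\]
With this correction your proof is complete and essentially identical to the paper's.
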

\begin{proof}
\eqref{item11} $\Rightarrow$ \eqref{item22}:
Neumann boundary conditions \eqref{eq:Neumanboundcond} imply that
$f_1'(0)f_2(y)-r_1f_1(0)f_2'(y)=0$ and $-r_2 f_1'(u)f_2(0)+f_1(u)f_2'(0)=0$. Solving these differential equation\rems{s} imply that $f_1$ and $f_2$ (and thus $f$) are exponential.
\\
\eqref{item22} $\Rightarrow$ \eqref{item11}: This implication is straightforward.
\\
\eqref{item22} $\Rightarrow$ \eqref{item33}:
Neumann boundary conditions \eqref{eq:Neumanboundcond} imply that for all $v>0$, $xe^{vy} - r_1 y e^{vy}=0$  and that for all $u>0$, $-r_2 x e^{ux} +ye^{ux}=0$. It follows that $r_1=x/y$, $r_2=y/x$, and thus $r_1r_2=1$.
\\
\eqref{item33} $\Rightarrow$ \eqref{item22}:
Let us define $f(u,v)= e^{ux_1+vy_2}$. We need to show that $f$ satisfies the partial differential equation of Proposition~\ref{prop:PDE}. This will imply that $f$ is the absorption probability.
The fact that $r_1=1/r_2$, combined with \eqref{eq:x1}, gives $r_1=x_1/y_2$. This implies that $f$ satisfies the Neumann boundary conditions  in \eqref{eq:Neumanboundcond}. The limit values are satisfied because $f(0,0)=1$ and $\lim_{(u,v)\to\infty} f(u,v)=0$ for $x_1<0$ and $y_2<0$. It now only remains to show that $\mathcal{G}f=0$. We now only need verify that $K(x_1,y_2)=0$, see Figure~\ref{fig:productform}. By definition of $y_2$ (see \eqref{eq:x1}), we have
\begin{align*}
K(x_1,y_2)&=y_2\left( \frac{y_2 }{2}\left( \left(\frac{x_1}{y_2} \right)^2+1+2\rho \frac{x_1}{y_2} \right) +\mu_1 \frac{x_1}{y_2}+\mu_2 \right)
\\ &=
y_2\left(  \frac{y_2}{2}\left( r_1^2+1+2\rho r_1 \right) +\mu_1 r_1+\mu_2 \right)=0.\\
\end{align*}
\eqref{item33} $\Leftrightarrow$ \eqref{item44}: The following equivalences hold:
\begin{align*}
r_1r_2=1 & \Leftrightarrow \left( \sin(\beta)/\tan(\delta)-\cos(\beta) \right) \left( \sin(\beta)/\tan(\epsilon)-\cos(\beta) \right)=1 \quad \text{by } \eqref{eq:defangles}
\\ & \Leftrightarrow  \frac{\sin(\beta)}{\tan(\epsilon}=\frac{\tan(\delta)}{\sin(\beta)-\cos(\beta)\tan(\delta)}+\cos(\beta)=\frac{\tan(\delta)(1-\cos^2(\beta)+\cos(\beta)\sin(\beta)}{\sin(\beta)-\cos(\beta)\tan(\delta)}
\\ & \Leftrightarrow \tan(\epsilon)=\frac{\tan(\beta)-\tan(\delta)}{1+\tan(\delta)\tan(\beta)}
\\ & \Leftrightarrow \tan(\epsilon)=\tan (\beta-\delta)
\\ & \Leftrightarrow \epsilon =\beta-\delta +\pi.
\end{align*}
\end{proof}

\begin{remark}[Standard and dual skew symmetry] 
The standard skew symmetry condition for the matrix $\left(\begin{array}{cc}
1 & -r_2 \\ 
-r_1 & 1
\end{array} \right) $ is $2\rho=-r_1-r_2$ or equivalently $\epsilon+\delta=\pi$. The standard skew symmetry condition for the dual matrix $\left(\begin{array}{cc}
r_2 & -1 \\ 
-1 & r_1
\end{array}\right)$ defined in Section~\ref{subsec:dualprocess} is $2\rho=-1/r_1-1/r_2$ or equivalently $\epsilon+\delta-2\beta=\pi$. 
Note that the dual skew symmetry condition obtained in Theorem~\ref{thm:productform} is different from these two conditions.
Further properties of the \textit{dual skew symmetry} condition will be explored in future work. 
\end{remark}


\begin{figure}[h]
\centering
\includegraphics[trim={0cm 0.5cm 0cm 0.5cm}, clip,scale=1.1]{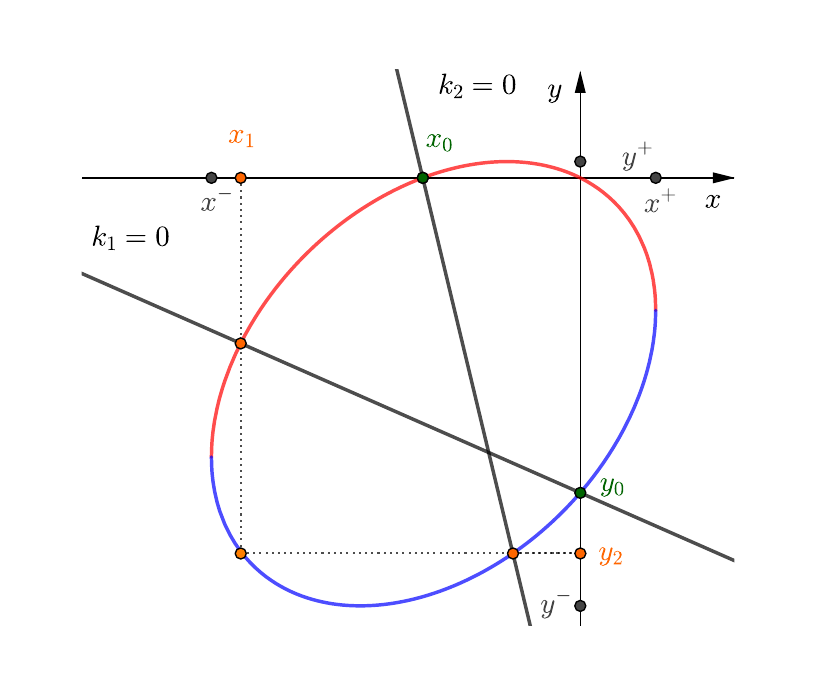}
\includegraphics[trim={0cm 0.5cm 0cm 0.5cm}, clip,scale=0.6]{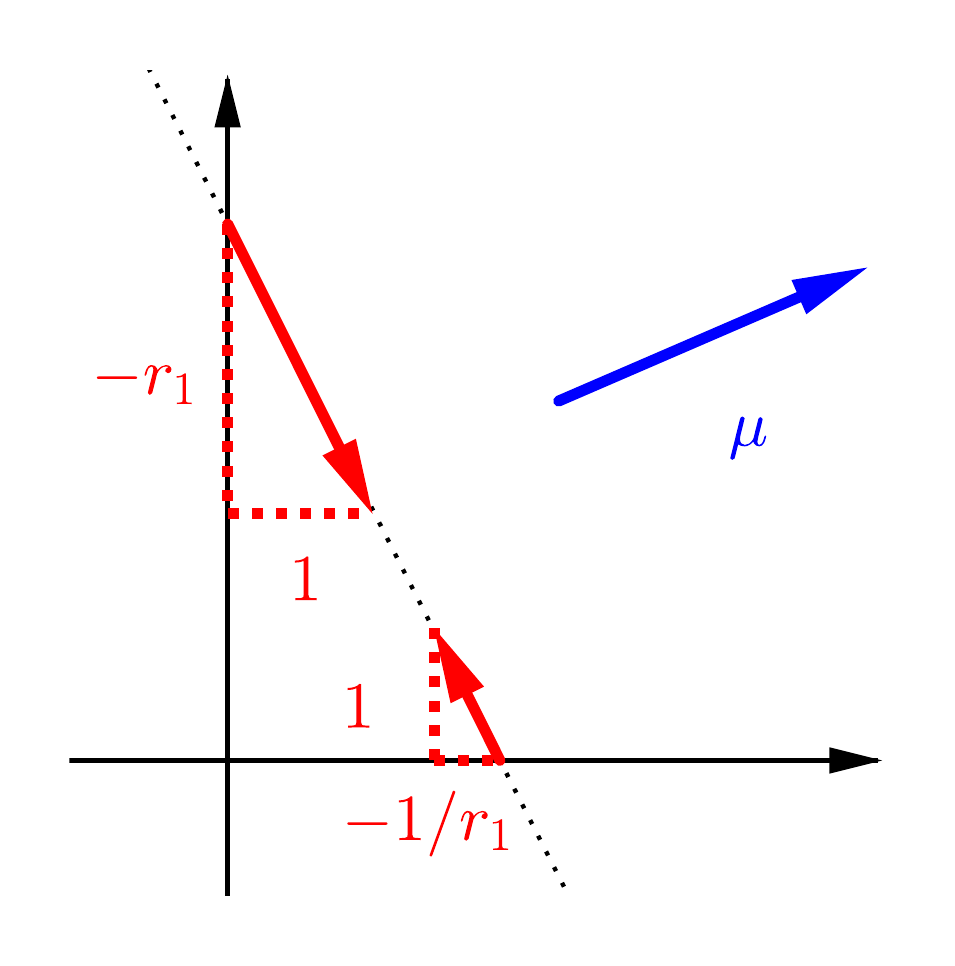}
\caption{Dual skew symmetry: on the left, we see that $K(x_2,y_2) = 0$; on the right, we see that condition $r_1r_2=1$ impl\rems{ies} that the reflection vectors are in opposite directions.}
\label{fig:productform}
\end{figure}

\section{Integral expression of the Laplace transform $\psi_1$}
\label{sec:integralexpression}

In this section, we establish  a boundary value problem (BVP) satisfied by the Laplace transform (Proposition~\ref{prop:BVPBrownian}). The section's key result is Theorem~\ref{thm:explicitF1Brown}, which gives an explicit integral formula for the Laplace transform of the escape probability. 

\subsection{Carleman boundary value problem}
\label{subsec:BVP}

We state a Carleman BVP satisfied by the Laplace transform $\psi_1$.

\begin{proposition}[Carleman BVP]
\label{prop:BVPBrownian}
The Laplace transform $\psi_1$ satisfies the following boundary value problem:
\begin{enumerate}[label=(\roman*)]
\item
\label{item:1a}
$\psi_1(x)$ is meromorphic on $\mcG$ and continuous on $\overline{\mathcal{G}}$.
\item
\label{item:1abis}
$\psi_1(x)$ admits one or two poles in $\mathcal{G}$. $0$ is always a simple pole and $x_1$ is a simple pole if and only if 
$2\delta -\theta +\beta> 2\pi$.  
\item
\label{item:2a}
$\lim_{x\to\infty} x \psi_1(x)=0$.
\item
\label{item:3a}
$\psi_1$ satisfies the boundary condition
$$
\psi_1(\overline{x})=G(x)\psi_1(x),\quad \forall\, x\in\mcH,
$$
where
\begin{equation}
\label{eq:Gdefbrownien}
G(x):=\frac{k_1}{k_2}(x,Y^+(x)) \frac{k_2}{k_1}(\overline{x},Y^+(x)).
\end{equation}
\end{enumerate}
\end{proposition}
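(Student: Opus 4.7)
The statement \ref{item:1a} is exactly the conclusion of Lemma~\ref{lem:continuationBrown}, since the inclusion $\overline{\mathcal{G}} \subset S \cup \{0\}$ has already been verified there. Statement \ref{item:1abis} follows by combining Lemma~\ref{lem:poleBrownian}, which identifies $0$ as a simple pole always and $x_1$ as a simple pole of $\psi_1$ in $\mathcal{G}$ precisely when $k_1(X^\pm(y^+), y^+) < 0$, with Lemma~\ref{lem:condgeom}, which translates this analytic inequality into the geometric condition $2\delta - \theta + \beta > 2\pi$.

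For \ref{item:2a}, I would first treat $x \to \infty$ along the positive real axis: by the initial value theorem for Laplace transforms applied to $\psi_1$, combined with the limit $\mathbb{P}_{(u,0)}[T=\infty] \to 0$ as $u \to 0^+$ furnished by Theorem~\ref{thm:lim0}, one obtains $\lim_{x \to +\infty} x \psi_1(x) = 0$. To upgrade this to $x \to \infty$ within $\overline{\mathcal{G}}$, I would use the meromorphic continuation \eqref{eq:continuation}: on the unbounded part of $\mathcal{H}$, the value $Y^+(x) \in [y^+, \infty)$ tends to $+\infty$ as $|x| \to \infty$, so $\psi_2(Y^+(x)) \to 0$ (since $\psi_2$ is the Laplace transform of a bounded function on $\mathbb{R}_+$), while the ratio $k_2/k_1$ remains bounded by a rational function of $x$. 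A direct comparison then yields the decay $x \psi_1(x) \to 0$.

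The core of the proof is \ref{item:3a}. The key geometric observation is that for $x \in \mathcal{H}$, there exists a real value $y \in [y^+, \infty)$ such that $K(x, y) = 0$; by the symmetry of $\mathcal{H}$ with respect to the real axis established in Lemma~\ref{lem:hyperbole} and since $K$ has real coefficients, one also has $K(\overline{x}, y) = 0$. Writing $y = Y^+(x)$ via the appropriate branch, both $(x, Y^+(x))$ and $(\overline{x}, Y^+(x))$ are thus zeros of the kernel, while $Y^+(x) \in \mathbb{R}_+$ ensures $\psi_2(Y^+(x))$ is well-defined. Applying the functional equation~\eqref{eq:functionalequation}, extended by the analytic continuation already used in Lemma~\ref{lem:continuationBrown}, at both points gives
\begin{align*}
k_1(x, Y^+(x))\, \psi_1(x) + k_2(x, Y^+(x))\, \psi_2(Y^+(x)) &= 0, \\
k_1(\overline{x}, Y^+(x))\, \psi_1(\overline{x}) + k_2(\overline{x}, Y^+(x))\, \psi_2(Y^+(x)) &= 0.
\end{align*}
Eliminating $\psi_2(Y^+(x))$ from these two identities yields exactly $\psi_1(\overline{x}) = G(x)\psi_1(x)$ with $G$ as in \eqref{eq:Gdefbrownien}.

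The main obstacle is the quantitative control in \ref{item:2a} for approaches to infinity within $\overline{\mathcal{G}}$ rather than along the real axis alone, which relies on parametrizing $\mathcal{H}$ by $y \in [y^+, \infty)$ and tracking how $|x|$ and $Y^+(x)$ jointly grow. A secondary technicality in \ref{item:3a} is that the derivation requires $k_2(x, Y^+(x))$ and $k_1(\overline{x}, Y^+(x))$ to be nonzero along $\mathcal{H}$, which holds except possibly at isolated points; the identity then extends to all of $\mathcal{H}$ by continuity of $\psi_1$ on $\overline{\mathcal{G}}$.
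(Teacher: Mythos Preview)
Your proof is correct and follows essentially the same approach as the paper: items \ref{item:1a}, \ref{item:1abis}, and \ref{item:3a} are handled identically, by invoking Lemma~\ref{lem:continuationBrown}, combining Lemmas~\ref{lem:poleBrownian} and~\ref{lem:condgeom}, and eliminating $\psi_2(Y^+(x))$ from the two instances of the functional equation on~$\mathcal{H}$. For \ref{item:2a} the paper is terser---it simply invokes the initial value theorem together with $\mathbb{P}_{(0,0)}[T=\infty]=0$, without separately treating complex $x\to\infty$ in $\overline{\mathcal{G}}$---so your additional analysis via the continuation formula is extra care rather than a genuinely different method.
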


\begin{proof}
Statement \ref{item:1a} immediately follows from Lemma~\ref{lem:continuationBrown}. Statement \ref{item:1abis} immediately follows from Lemmas~\ref{lem:poleBrownian} and \ref{lem:condgeom}. Statement ~\ref{item:2a} follows from the initial value theorem for the Laplace transform, which implies that $\lim_{x\to\infty} x \psi_1(x)=  \mathbb{P}_{(0,0)} [T=\infty ] =0$.
To prove statement~\ref{item:3a}, we recall the functional equation~\eqref{eq:functionalequation}.  For $x\in\mathcal{H}$, we evaluate this equation for $(x,Y^+(x))$ and $(\overline{x},Y^+(\overline{x}))$. By the definition of $Y^+$, we have $K(x,Y^+(x))=K(\overline{x},Y^+(\overline{x}))=0$. By the definition of the hyperbola $\mathcal{H}$ in \eqref{eq:hyperbole}, we have $Y^+(\overline{x})=Y^+({x})$. This enables us to obtain the following system of equations
$$
\begin{cases}
0=k_1(x,Y^+(x)) \psi_1 (x)+
k_2(x,Y^+(x)) \psi_2 (Y^+(x)),
\\ 
0=k_1(\overline{x},Y^+(x)) \psi_1 (\overline{x})+
k_2(\overline{x},Y^+(x)) \psi_2 (Y^+(x)).
\end{cases}
$$
Solving this system of equations and eliminating $\psi_2 (Y^+(x))$, we obtain the boundary condition in statement~\ref{item:3a}.
\end{proof}

\subsection{Gluing function}

To solve the BVP, we need a conformal gluing function which glues together the upper and lower parts of the hyperbola. This conformal gluing function was introduced in~\cite{franceschi_tuttes_2016,franceschi_2019}. For $a\ges 0$ and for $x\in\mbC\setminus (-\infty,-1]$, the generalized Chebyshev polynomial is defined by
$$
T_a(x):=\cos(a\arccos(x))=\dfrac{1}{2}\left((x+\sqrt{x^2-1})^a+ (x-\sqrt{x^2-1})^a\right).
$$
We define the angle
$$
\beta:=\arccos(-\rho).
$$
We also define the functions
\begin{equation}
\label{eq:wdefBrown}
w(x)\coloneqq T_{\frac{\pi}{\beta}}\left(\dfrac{2x-(x^++x^-)}{x^+-x^-}\right),
\end{equation}
and
$$
W(x):= \frac{w(x)-w(X^\pm(y^+))}{w(x)-w(0)}.
$$
We now recall a useful \rems{l}emma from \cite{franceschi_2019} for the conformal gluing function $W$.
\begin{lemma}[Lemma 9, \cite{franceschi_2019}]
\label{lem:gluingwBrownien}
The function $W$
satisfies the following properties
\begin{enumerate}[label=(\roman*)]
\item
$W$ is holomorphic in $\mathcal{G}\setminus \{ 0 \}$, continuous in $\overline{\mathcal{G}}\setminus \{ 0 \}$ and bounded at infinity.
\item
$W$ is bijective from $\mathcal{G}\setminus \{ 0 \}$ to $\mathbb{C}\setminus [0,1]$.
\item
$W$ satisfies the gluing property on the hyperbola
$$
W(x)=W(\overline{x}),\quad \forall\, x\in\mathcal{H}.
$$
\end{enumerate}
\end{lemma}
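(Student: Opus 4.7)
The plan is to reduce all three properties of $W$ to analogous properties of the auxiliary function $w$, since $W$ is simply the M\"obius-type transformation of $w$ that sends $w(0)$ to $\infty$ and $w(X^\pm(y^+))$ to $0$. Since this lemma is quoted verbatim from \cite{franceschi_2019}, the cleanest route is to cite that reference; what follows sketches how I would reconstruct the proof. The three targets for $w$ are: (i$'$) holomorphy on $\mathcal{G}$ and continuity on $\overline{\mathcal{G}}$; (ii$'$) bijectivity of $w$ from $\mathcal{G}$ onto the slit plane $\mathbb{C}\setminus(-\infty, w(X^\pm(y^+))]$; and (iii$'$) the symmetry $w(x) = w(\overline{x})$ for $x\in\mathcal{H}$. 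From these, the passage to $W$ is routine: (iii) follows from (iii$'$); (i) follows once one verifies that $x=0$ is the unique zero of $w-w(0)$ in $\overline{\mathcal{G}}\setminus\{0\}$; and (ii) follows because the M\"obius map $\zeta\mapsto(\zeta-w(X^\pm(y^+)))/(\zeta-w(0))$ sends the half-line $(-\infty, w(X^\pm(y^+))]$ onto $[0,1]$, provided the relative positions of $w(0)$ and $w(X^\pm(y^+))$ on the real line are as expected. Boundedness at infinity of $W$ then follows from $w(x)\to\infty$ as $x\to\infty$, which gives $W(x)\to 1$.

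For $w$ itself, the strategy relies on the factorization $w = T_{\pi/\beta}\circ\varphi$, where $\varphi(x) := (2x-(x^++x^-))/(x^+-x^-)$ is the affine normalization sending the branch points $x^\pm$ to $\pm 1$. The generalized Chebyshev function $T_{\pi/\beta}$ is holomorphic on $\mathbb{C}\setminus(-\infty,-1]$, so property (i$'$) reduces to checking that $\varphi(\mathcal{G})\cap(-\infty,-1]=\emptyset$, which one verifies using that $\mathcal{G}$ contains $x^+$ and is separated from the half-line $\{x\in\mathbb{R}:\varphi(x)\leqslant -1\}$ by the hyperbola $\mathcal{H}$ itself.

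For the gluing property (iii$'$), write $T_{\pi/\beta}(z) = \cos((\pi/\beta)\arccos z)$ and use $\arccos(\overline{z})=\overline{\arccos(z)}$ on the principal branch. The identity $T_{\pi/\beta}(\zeta) = T_{\pi/\beta}(\overline\zeta)$ for $\zeta=\varphi(x)$, $x\in\mathcal{H}$, amounts to the requirement that $\arccos(\varphi(\mathcal{H}))$ lie on a vertical line $\{k\beta + it:t\in\mathbb{R}\}$ for some integer $k$, so that $\cos((\pi/\beta)(k\beta\pm it)) = \pm\cosh((\pi/\beta)t)$ is invariant under conjugation. This is the step where the specific exponent $\pi/\beta$ enters decisively. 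The bijectivity (ii$'$) follows once the vertical-line identification is in place, since $T_{\pi/\beta}$ then conformally unfolds the wedge-like region $\varphi(\mathcal{G})$ onto a slit plane, and $\varphi$ is an affine bijection.

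The main obstacle is the gluing step. Its proof hinges on a geometric fact: the asymptotic direction of the hyperbola $\mathcal{H}$, determined by its equation \eqref{eq:hyperbole}, is encoded by the angle $\beta=\arccos(-\rho)$. Verifying that $\arccos\circ\varphi$ maps $\mathcal{H}$ onto a vertical line whose real part is an integer multiple of $\beta$ requires a careful parametric analysis using the explicit formulas \eqref{eq:defXYpm} and \eqref{eq:defxypm} for $X^\pm$ and $x^\pm$, together with the half-angle identities relating $\beta$ to $\rho$. Precisely this matching is the reason why the exponent $\pi/\beta$ appears in the definition \eqref{eq:wdefBrown} of $w$, and it is the delicate angular computation at the heart of the lemma.
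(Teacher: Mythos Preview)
The paper does not prove this lemma at all; it is simply quoted from \cite{franceschi_2019} as indicated in its title, and no proof block follows the statement. Your proposal correctly identifies that citing the reference is the appropriate route, and the reconstruction sketch you provide (reducing $W$ to $w$ via the M\"obius post-composition, then analyzing $w=T_{\pi/\beta}\circ\varphi$ with the gluing property coming from the angular matching between $\beta$ and the asymptotic direction of $\mathcal{H}$) is indeed the method used in the cited paper.
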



\subsection{Index}

We begin with some necessary notation. Let the angle $\Delta$ be the variation of the argument of $G(x)$ when $x$ lies on $\mathcal{H^+}$:
$$
\Delta:=[\arg G(x)]_{\mathcal{H^+}}=  \left[\arg \frac{k_1}{k_2} (x,Y^+(x))\right]_{\mathcal{H}}.
$$
Further, let $d$ be the argument of $G$ at the real point of the hyperbola $\mathcal{H}$:
$$
d:= \arg G(X^+(y^+))\in(-\pi,\pi].
$$
We define the index $\kappa$ as
$$
\kappa:=\left\lfloor \frac{d+\Delta}{2\pi} \right\rfloor.
$$
The index shall prove useful to solving the boundary value problem given in Proposition~\ref{prop:BVPBrownian}.
\begin{lemma}
We have
$$
d=
\begin{cases}
0 & \text{if } k_1(x^-,Y^\pm(x^-))\neq 0 \text{ i.e. } 2\delta -\theta +\beta\neq 2\pi ,
\\
\pi & \text{if } k_1(x^-,Y^\pm(x^-))= 0 \text{ i.e. } 2\delta -\theta +\beta= 2\pi,
\end{cases}
$$
and 
$$
\tan \frac{d+\Delta}{2}=\frac{(1-(r_1+2\rho)(r_2+2\rho))\sqrt{1-\rho^2}}{r_1+r_2+3\rho-r_1r_2\rho-2(r_1+r_2)\rho^2-4\rho^3} = \tan (\epsilon+\delta+\beta).
$$
Note also that $\epsilon+\delta+\beta\geqslant 2\pi$ is equivalent to $1-(r_1+2\rho)(r_2+2\rho)\leqslant 0$.
\label{lem:techniqueangle}
\end{lemma}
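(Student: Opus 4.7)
The key simplification is that on $\mathcal{H}$ the branch $Y^+(x)$ is real and $k_1, k_2$ have real coefficients, so $k_i(\overline{x}, Y^+(x)) = \overline{k_i(x, Y^+(x))}$. Setting $h(x) := k_1(x, Y^+(x))/k_2(x, Y^+(x))$ yields $G(x) = h(x)/\overline{h(x)}$ on $\mathcal{H}$, hence $|G| \equiv 1$ and $\arg G(x) = 2 \arg h(x)$. The whole lemma reduces to evaluating $\arg h$ at the two endpoints of $\mathcal{H}^+$.

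At the real endpoint $x_* := X^\pm(y^+)$, where $Y^+(x_*) = y^+$, the number $h(x_*)$ is real. If $k_1(x_*, y^+) \neq 0$ then $\arg h(x_*) \in \{0, \pi\}$ and $d = 2 \arg h(x_*) \equiv 0 \pmod{2\pi}$, giving $d = 0$. If $k_1(x_*, y^+) = 0$, I linearize: since $x_*$ is a horizontal-tangent point of the ellipse $K = 0$ one has $Y^{+\prime}(x_*) = 0$, so $k_1(x, Y^+(x)) = \tfrac{r_2 + 2\rho}{2}(x - x_*) + O((x - x_*)^2)$. Parameterizing $\mathcal{H}^+$ by $t = Y^+(x) - y^+ \downarrow 0$ gives $x - x_* = i \sqrt{ct} + O(t)$ for some $c > 0$ (the tangent to $\mathcal{H}$ at $x_*$ is purely imaginary), so $\arg(x - x_*) \to \pi/2$, $\arg h(x) \to \pm \pi/2$, and $d = \pi$. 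Lemma~\ref{lem:condgeom} then converts $k_1(X^\pm(y^+), y^+) = 0$ into $2 \delta - \theta + \beta = 2\pi$.

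For $d + \Delta$, which is the continuous value of $\arg G$ at infinity along $\mathcal{H}^+$: as $x \to \infty$ on $\mathcal{H}^+$ with $y = Y^+(x) \to +\infty$ real, the dominant quadratic part of $K(x,y) = 0$ forces $x/y \to e^{i\beta}$ with $\cos\beta = -\rho$, so
\[
h(\infty) = \frac{1 + (r_2 + 2\rho) e^{i\beta}}{(r_1 + 2\rho) + e^{i\beta}}.
\]
Writing $A = r_1 + 2\rho$ and $B = r_2 + 2\rho$, rationalizing the denominator gives
\[
\tan \arg h(\infty) = \frac{(AB - 1)\sin\beta}{(A + B) + (AB + 1)\cos\beta},
\]
which, after substituting $\cos\beta = -\rho$ and expanding, reproduces (up to the overall orientation sign in $\Delta$) the rational expression in the statement. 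For the trigonometric identity, I use $\tan \epsilon = \sin\beta/(\cos\beta - r_1)$ together with $A + \cos\beta = r_1 - \cos\beta$ to deduce $\arg(A + e^{i\beta}) = \pi - \epsilon$ (both sides lie in $(0, \pi)$ with the same tangent); the symmetric computation gives $\arg(B + e^{i\beta}) = \pi - \delta$, so $\arg(B + e^{-i\beta}) = \delta - \pi$. Using $1 + B e^{i\beta} = e^{i\beta}(B + e^{-i\beta})$, I obtain $\arg h(\infty) = \beta + (\delta - \pi) - (\pi - \epsilon) = \delta + \epsilon + \beta - 2\pi$, and therefore $\tan((d + \Delta)/2) = \tan(\delta + \epsilon + \beta)$.

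For the last equivalence, the drift-positivity and reflection assumptions force $\delta + \epsilon \geqslant \pi + \beta$ (from $\alpha \geqslant 1$) together with $\delta, \epsilon < \pi$, so $\delta + \epsilon + \beta - 2\pi \in [2\beta - \pi,\, \beta) \subset (-\pi, \pi)$, which is the principal branch of $\arg$. On this branch the angle $\arg h(\infty) = \delta + \epsilon + \beta - 2\pi$ has the same sign as its tangent, whose numerator $(AB - 1)\sin\beta$ vanishes exactly when $AB = 1$; the denominator has constant sign in the relevant parameter regime (checked by its value at the boundary locus $AB = 1$). Hence $\delta + \epsilon + \beta \geqslant 2\pi \Leftrightarrow AB \geqslant 1 \Leftrightarrow 1 - (r_1 + 2\rho)(r_2 + 2\rho) \leqslant 0$. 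The main obstacle throughout is the bookkeeping of sign conventions, in particular the orientation of $\mathcal{H}^+$ used to define $\Delta$; the degenerate case $k_1(x_*, y^+) = 0$ also requires a careful local analysis of the tangent direction of $\mathcal{H}^+$ at $x_*$.
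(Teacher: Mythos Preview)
Your proof is correct and follows the same global strategy as the paper (evaluate the argument of $G$ at the two endpoints of $\mathcal{H}^+$), but the execution is genuinely different and in places cleaner. The paper computes $\lim_{x\to\infty,\,x\in\mathcal H^+}G(x)$ directly by substituting $\lim Y^+(x)/x$ into the four factors of~\eqref{eq:Gdefbrownien}, expands the resulting quotient of conjugate complex numbers, and then asserts the identity $\tan\frac{d+\Delta}{2}=\tan(\epsilon+\delta+\beta)$ as following from~\eqref{eq:defangles} ``and straightforward calculations''. You instead factor $G=h/\overline h$ with $h=k_1/k_2$, reduce the problem to computing $\arg h(\infty)$, and identify $\arg(A+e^{i\beta})=\pi-\epsilon$ and $\arg(1+Be^{i\beta})=\beta+\delta-\pi$ directly from the definitions in~\eqref{eq:defangles}. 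This makes the trigonometric identity $\tan\frac{d+\Delta}{2}=\tan(\epsilon+\delta+\beta)$ transparent rather than a black-box computation, at the cost of having to track branch choices of $\arg$ carefully. Your local analysis for the degenerate case $k_1(X^\pm(y^+),y^+)=0$ (using $Y^{+\prime}(x_*)=0$ and the purely imaginary tangent of $\mathcal H$ at $x_*$) is also more explicit than the paper's one-line claim that $G(X^+(y^+))=-1$ in that case.

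One point worth flagging: your algebraic expression $\tan\arg h(\infty)=\frac{(AB-1)\sin\beta}{(A+B)+(AB+1)\cos\beta}$ and the lemma's displayed rational expression differ by a sign in the numerator; you correctly attribute this to the orientation convention for $\mathcal H^+$. Since the relation $\tan\frac{d+\Delta}{2}=\tan(\epsilon+\delta+\beta)$ and the final equivalence $\epsilon+\delta+\beta\geqslant 2\pi \Leftrightarrow AB\geqslant 1$ are unaffected by this sign (they depend only on $\tan$ modulo $\pi$ and on the zero locus of the numerator, respectively), this does not create a gap in your argument. Your verification of the final equivalence via the sign of the numerator on the interval $\epsilon+\delta+\beta-2\pi\in[2\beta-\pi,\pi)$ matches the paper's, which phrases it as $\sin(\epsilon+\delta+\beta)$ having the same sign as $(r_1+2\rho)(r_2+2\rho)-1$.
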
 
\begin{proof}
The proof is in each step similar to the proof of \cite[Lemma 13]{franceschi_2019}. Firstly, note that the value of $d$ is obtained by the fact that $G(X^+(y^+))=1$ if $k_1(x^-,Y^\pm(x^-))\neq 0$ and that $G(X^+(y^+))=-1$ if $k_1(x^-,Y^\pm(x^-))= 0$.
We recall that by definition we have $\Delta = \lim_{x\to\infty \atop x\in \mathcal{H^+}} \arg G(x) -d $ and that by \eqref{eq:Gdefbrownien} we have
$$
G(x)=\frac{(\frac{1}{2}(r_2  +Y^+(x)/x)+\rho +\mu_2/x)(\frac{1}{2}(1+r_1 Y^+(x)/\overline{x})+\rho Y^+(x)/\overline{x}+\mu_1/\overline{x})}{(\frac{1}{2}(r_2  +Y^+(x)/\overline{x})+\rho +\mu_2/\overline{x})(\frac{1}{2}(1+r_1 Y^+(x)/{x})+\rho Y^+(x)/{x}+\mu_1/{x})}.
$$
By \eqref{eq:defXYpm}, we may compute the limit $$\lim_{x\to\infty \atop x\in \mathcal{H^+}} \frac{Y^+(x)}{x} =-\rho+i\sqrt{1-\rho^2},$$
from which we obtain
\begin{align*}
e^{i(\Delta+d)}&=\lim_{x\to\infty \atop x\in \mathcal{H^+}} G(x)
\\ &=
\frac{(r_2  +\rho+i\sqrt{1-\rho^2}) (1-r_1 \rho-2\rho^2-i(r_1+2\rho)\sqrt{1-\rho^2})}{(r_2  +\rho-i\sqrt{1-\rho^2})(1-r_1 \rho-2\rho^2+i(r_1+2\rho)\sqrt{1-\rho^2})}
\\ &=
\frac{(r_2+\rho)(1-r_1\rho-2\rho^2)+(r_1+2\rho)(1-\rho^2)+i(1-r_1r_2-2(r_1+r_2)\rho-4\rho^2)\sqrt{1-\rho^2}}{(r_2+\rho)(1-r_1\rho-2\rho^2)+(r_1+2\rho)(1-\rho^2)-i(1-r_1r_2-2(r_1+r_2)\rho-4\rho^2)\sqrt{1-\rho^2}}.
\end{align*}
We then see that
$$
\tan \frac{d+\Delta}{2}=\frac{(1-r_1r_2-2(r_1+r_2)\rho-4\rho^2)\sqrt{1-\rho^2}}{(r_2+\rho)(1-r_1\rho-2\rho^2)+(r_1+2\rho)(1-\rho^2)}=\tan(\epsilon+\delta+\beta),
$$
where the last equality follows from \eqref{eq:defangles} and straightforward calculations.
The proof concludes by recalling the two following facts: 
\begin{enumerate}
\item For $\alpha=\frac{\epsilon+\delta-\pi}{\beta} \geqslant 1$ and for $\epsilon,\delta$ and $\beta\in(0,\pi)$, we have that $-\pi<2\beta-\pi\leqslant\epsilon+\delta+\beta-2\pi<\pi$.
\item By \eqref{eq:defangles}, $\sin(\epsilon+\delta+\beta)$ has the same sign as that $(r_1+2\rho)(r_2+2\rho)-1$, where
$(r_1+2\rho)(r_2+2\rho)-1 =\sin(\epsilon+\delta+\beta) \frac{\sin(\beta)}{\sin(\epsilon)\sin(\delta)}$. \
\end{enumerate}
\end{proof}
We now prepare to state Lemma \ref{lem:ytilde} below.  For $1-(r_1+2\rho)(r_2+2\rho)\neq 0$, let us define
\begin{equation}
\widetilde{y}:=2\frac{\mu_2-\mu_1(r_2+2\rho)}{(r_1+2\rho)(r_2+2\rho)-1}
=2\mu_1 \frac{\sin(\beta+\delta-\theta)\sin(\epsilon)}{\sin(\beta-\theta)\sin(\epsilon+\delta+\beta)},
\label{eq:defytilde}
\end{equation}
where the last equality holds by \eqref{eq:defangles}.
\begin{lemma}
If $\widetilde{y}-y^+\leqslant 0$ or if $1-(r_1+2\rho)(r_2+2\rho)=0$ then 
$$( G(x)=1 \text{ and } x\in\mathcal{H} ) \Leftrightarrow x=X^\pm(y^+),$$
and thus $d+\Delta\in(-2\pi,2\pi)$.
If $\widetilde{y}-y^+>0$ then 
$$(G(x)=1 \text{ and } x\in\mathcal{H} ) \Leftrightarrow ( x=X^\pm(y^+) \text{ or }x=X^\pm(\widetilde{y}) ),$$
and thus $d+\Delta\in(-4\pi,4\pi)$.
\label{lem:ytilde}
\end{lemma}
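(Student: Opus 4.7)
The proof naturally splits into two parts: first, an algebraic characterization of the points of $\mathcal{H}$ on which $G$ equals $1$; second, a topological argument that converts this characterization into the claimed bound on $d+\Delta$.

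For the first part, the plan is to exploit the identity $Y^+(\overline{x})=Y^+(x)$ valid on $\mathcal{H}$, which is a direct consequence of the equation \eqref{eq:hyperbole} of the hyperbola. Setting $y:=Y^+(x)$ and using \eqref{eq:k1k2}, a short computation gives the factorization
$$
k_1(x,y)\,k_2(\overline{x},y)-k_2(x,y)\,k_1(\overline{x},y)=(x-\overline{x})\,M(y),
$$
with
$$
M(y)=\frac{(r_1+2\rho)(r_2+2\rho)-1}{4}\,y+\frac{(r_2+2\rho)\mu_1-\mu_2}{2}.
$$
Hence $G(x)=1$ on $\mathcal{H}$ is equivalent to either $x=\overline{x}$, which forces $x=X^\pm(y^+)$ (the only real point of $\mathcal{H}$), or $M(Y^+(x))=0$. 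When $(r_1+2\rho)(r_2+2\rho)\neq 1$, $M$ admits the unique root $\widetilde{y}$ from \eqref{eq:defytilde}, producing the complex conjugate pair $X^\pm(\widetilde{y})$ on $\mathcal{H}$ precisely when $\widetilde{y}>y^+$; in the boundary case $\widetilde{y}=y^+$ this pair collapses onto the real endpoint, and for $\widetilde{y}<y^+$ there is no point of $\mathcal{H}$ at height $\widetilde{y}$. When $(r_1+2\rho)(r_2+2\rho)=1$, $M$ reduces to a nonzero constant under the standing hypotheses (the alternative $M\equiv 0$ would make $G\equiv 1$ on $\mathcal{H}$, a degenerate situation not considered here), so no additional solutions arise. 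This yields the dichotomy stated in the lemma.

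The second part relies on the observation that $|G|=1$ on $\mathcal{H}$: when $y=Y^+(x)$ is real, the numerator and denominator of $G$ as defined in \eqref{eq:Gdefbrownien} are complex conjugates of each other. One can therefore select a continuous branch $\phi$ of $\arg G$ along $\mathcal{H}^+$ with $\phi(X^\pm(y^+))=d$ and $\phi(x)\to d+\Delta$ as $x\to\infty$ along $\mathcal{H}^+$; the identity $G(x)=1$ becomes $\phi(x)\in 2\pi\mathbb{Z}$. In the first regime of the lemma, Step~1 leaves no interior preimage of $1$, so $\phi$ cannot cross any element of $2\pi\mathbb{Z}$ on the open part of $\mathcal{H}^+$; by continuity $\phi$ remains in the connected component of $\mathbb{R}\setminus 2\pi\mathbb{Z}$ containing $d\in\{0,\pi\}$, which gives $d+\Delta\in(-2\pi,2\pi)$. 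In the second regime, $\mathcal{H}^+$ contains exactly one interior preimage of $1$, namely the point $X^+(\widetilde{y})$ of positive imaginary part (its conjugate belongs to $\mathcal{H}^-$), so $\phi$ is allowed at most one extra crossing of $2\pi\mathbb{Z}$ on the open part, widening the admissible range to $d+\Delta\in(-4\pi,4\pi)$.

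The main obstacle I foresee is not the factorization itself, which is purely algebraic, but the clean separation of the two regimes of $(r_1+2\rho)(r_2+2\rho)$ together with a careful verification that $\widetilde{y}>y^+$ really corresponds to two additional distinct points of $\mathcal{H}$; one has to check that the pair $X^\pm(\widetilde{y})$ indeed belongs to $\mathcal{H}$ rather than to the companion branch of the quadratic associated with $K$, which can be read directly from the definitions \eqref{eq:defXYpm} and \eqref{eq:defxypm}. The strictness of the open intervals $(-2\pi,2\pi)$ and $(-4\pi,4\pi)$ then follows from the explicit value of $d+\Delta$ computed in Lemma~\ref{lem:techniqueangle}, which prevents the terminal argument from landing on the boundary of these intervals.
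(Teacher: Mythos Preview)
Your proof is correct and follows essentially the same approach as the paper. Your factorization $k_1(x,y)k_2(\overline{x},y)-k_2(x,y)k_1(\overline{x},y)=(x-\overline{x})M(y)$ is exactly the paper's computation of $\Im\bigl(k_1(a+ib,y)k_2(a-ib,y)\bigr)$ rewritten (since $y$ is real on $\mathcal{H}$, the two expressions differ only by a factor $2i$), leading to the same dichotomy $b=0$ or $y=\widetilde{y}$; your topological argument for the range of $d+\Delta$ makes explicit what the paper leaves implicit in the statement.
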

\begin{proof}
Assume that $x\in\mathcal{H}$, where $x=a+ib$ for $a,b\in\mathbb{R}$ and $y=Y^\pm (x)$. Then by \eqref{eq:Gdefbrownien}, $G(x)=1$ is equivalent to $\Im ( k_1(a+ib,y) k_2(a-ib,y))=0$. Straightforward calculations yield
$$
\Im ( k_1(a+ib,y) k_2(a-ib,y))=\frac{b}{4}\left[\frac{y}{2}((r_1+2\rho)(r_2+2\rho)-1)-2\mu_2+2\mu_1(r_2+2\rho)\right],
$$
from which we may obtain that $G(x)=1$ is equivalent to $b=0$ or to $y=\widetilde{y}$. We conclude the proof by noting that 
\begin{enumerate}
\item $b=0$ and $x\in\mathcal{H}$ together imply that $x=X^\pm(y^+)$, the latter being the only real point of the hyperbola. 
\item By the definition of \eqref{eq:defHyp}, $x\in\mathcal{H}$ and $y=\widetilde{y}$ imply that $\widetilde{y}\in[y^+,\infty)$.
\end{enumerate}
\end{proof}
We continue with Lemma \ref{lem:techny} below.
\begin{lemma}
Assume that $2\delta-\theta+\beta>2\pi$. Then $\widetilde{y}>y^+$ is equivalent to $\epsilon+\delta+\beta<2\pi$.
\label{lem:techny}
\end{lemma}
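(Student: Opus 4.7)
The plan is to reduce the claim to the sign of a single trigonometric expression and show that this expression is negative throughout the region of interest.

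First I would compute $\widetilde y - y^+$ explicitly from the closed forms \eqref{eq:y+angle} and \eqref{eq:defytilde}. Placing everything over a common denominator yields
\begin{equation*}
\widetilde y - y^+ \;=\; \frac{\mu_1\, N}{\sin(\beta-\theta)\,\sin\beta\,\sin(\epsilon+\delta+\beta)},
\end{equation*}
where
\begin{equation*}
N \;:=\; 2\sin(\beta+\delta-\theta)\sin\epsilon\,\sin\beta \;-\; (1-\cos(\beta-\theta))\,\sin(\epsilon+\delta+\beta).
\end{equation*}
Since $0<\theta<\beta<\pi$ under condition \eqref{eq:conditionRmatrix}, the factors $\sin(\beta-\theta)$ and $\sin\beta$ are strictly positive. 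Consequently the sign of $\widetilde y - y^+$ coincides with the sign of $N/\sin(\epsilon+\delta+\beta)$.

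Next I would pin down the sign of the first term of $N$ using the assumption. Combined with $\delta<\pi$ and $\theta>0$, the inequality $2\delta-\theta+\beta>2\pi$ forces $\beta+\delta-\theta\in(\pi,2\pi)$, so $\sin(\beta+\delta-\theta)<0$ and the first summand of $N$ is negative. If moreover $\sin(\epsilon+\delta+\beta)>0$, then the second summand is also negative and $N<0$ is immediate. The delicate case is $\sin(\epsilon+\delta+\beta)<0$: the two summands of $N$ then have opposite signs, and one must prove that the negative first term dominates.

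The hard part will be this delicate case. My plan is to introduce $C:=(\beta-\theta)/2$ and reparametrize via $\delta=\pi-\delta'$ with $\delta'\in(0,C)$ (which is exactly a reformulation of the assumption) and $\epsilon=\beta+\delta'+s$ with $s\in(0,\pi-2\beta)$. This latter interval is nonempty precisely when $\beta<\pi/2$, which is forced in this case by the combination of $\alpha\geqslant 1$ and $\epsilon+\delta+\beta<2\pi$. After substitution the inequality $N<0$ becomes the trigonometric estimate
\begin{equation*}
\sin^2 C\,\sin(2\beta+s) \;<\; \sin(2C-\delta')\,\sin(\beta+\delta'+s)\,\sin\beta,
\end{equation*}
in which both sides are strictly positive. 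I would verify it by sum-to-product and double-angle identities, exploiting the sharp bounds $0<\delta'<C<\beta/2<\pi/4$ and $0<s<\pi-2\beta$; this is where all the trigonometric work concentrates.

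Once $N<0$ has been established throughout the region, the conclusion is routine: the standing assumption $\alpha\geqslant 1$ gives $\epsilon+\delta+\beta\geqslant \pi+2\beta>\pi$, so $\sin(\epsilon+\delta+\beta)<0$ if and only if $\epsilon+\delta+\beta<2\pi$. Combining with $N<0$ yields
\begin{equation*}
\widetilde y > y^+ \;\Longleftrightarrow\; \frac{N}{\sin(\epsilon+\delta+\beta)}>0 \;\Longleftrightarrow\; \sin(\epsilon+\delta+\beta)<0 \;\Longleftrightarrow\; \epsilon+\delta+\beta<2\pi,
\end{equation*}
which is the desired equivalence.
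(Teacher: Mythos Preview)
Your approach is correct and takes a genuinely different route from the paper. You compute $\widetilde y-y^+$ as a single fraction and aim to show the numerator $N$ is negative throughout the parameter region, so that the sign of $\widetilde y-y^+$ is governed entirely by $\sin(\epsilon+\delta+\beta)$; both directions of the equivalence then fall out at once. The paper instead treats the two implications separately and asymmetrically: for $\widetilde y>y^+\Rightarrow\epsilon+\delta+\beta<2\pi$ it uses only $\widetilde y>0$ (not $\widetilde y>y^+$) together with the sign of $\sin(\beta+\delta-\theta)$ in~\eqref{eq:defytilde}, which is a one-line observation; for the converse it shows that $\epsilon\mapsto\widetilde y$ and $\delta\mapsto\widetilde y$ are monotone on the relevant ranges, pushes each to its lower extreme ($\epsilon\to\beta$, then $\delta\to\pi+\tfrac{\theta-\beta}{2}$), and checks explicitly that the resulting $y_{\inf}$ exceeds $y^+$.

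What each buys: your reduction is structurally cleaner and yields a single self-contained inequality, but that inequality (your displayed estimate in $C,\delta',s$) still requires real work, and you only sketch it. The paper's monotonicity argument avoids a two-sided trigonometric comparison altogether: once the monotonicity in $\epsilon$ and in $\delta$ is checked (each a one-variable derivative sign), the problem collapses to a single explicit evaluation $y_{\inf}-y^+$, which simplifies to a visibly positive quotient. One small remark on your parametrization: $s\geqslant 0$ comes from $\alpha\geqslant 1$, so the correct range is $s\in[0,\pi-2\beta)$; the boundary $s=0$ is admissible and should be included.
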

\begin{proof}
We first note that $2\delta-\theta+\beta>2\pi$ implies that $\pi<\delta-\theta+\beta<2\pi$, and thus that $\sin(\delta-\theta+\beta)<0$. Recall that we have previously seen that the conditions in \eqref{eq:conditionRmatrix} are equivalent to $\alpha\geqslant 1$ and $\delta>\beta$, $\epsilon>\beta$, and thus that $\pi<\epsilon+\delta+\beta<3\pi$. We employ the following steps to conclude the proof:
\begin{enumerate}
\item Assume that $\widetilde{y}>y^+$. Then for $y^+>0$, we have that $\widetilde{y}>0$. Then by \eqref{eq:defytilde} we have that $\sin(\epsilon+\delta+\beta)<0$ and thus $\epsilon+\delta+\beta<2\pi$. 
\item We 
now assume that $\epsilon+\delta+\beta<2\pi$. Hence $\sin(\epsilon+\delta+\beta)<0$. By hypothesis we have $\beta<\epsilon <2\pi-\beta-\delta$. Using \eqref{eq:defytilde}, we may easily see that $\epsilon\mapsto \widetilde{y}$ is increasing for $\beta<\epsilon <2\pi-\beta-\delta$. Replacing $\epsilon$ by $\beta$ in \eqref{eq:defytilde}, we may deduce that
$$\widetilde{y}> y_\delta:=2\mu_1 \frac{\sin(\beta+\delta-\theta)\sin(\beta)}{\sin(\beta-\theta)\sin(2\beta+\delta)}.$$
By hypothesis, we have that $\pi+\frac{\theta-\beta}{2}<\delta<2\pi-2\beta$. Note that $\delta\mapsto y_\delta$ is increasing in this interval. We then see that
$$\widetilde{y}> y_{\rems{\text{inf}}}:=2\mu_1 \frac{\sin(\beta+\pi+\frac{\theta-\beta}{2}-\theta)\sin(\beta)}{\sin(\beta-\theta)\sin(2\beta+\pi+\frac{\theta-\beta}{2})}.$$
Employing \eqref{eq:y+angle} and performing straightforward calculations, 
we obtain
$$
\widetilde{y}-y^+> y_{\rems{\text{inf}}}-y^+
=\mu_1
\frac{-2\sin(\frac{\beta-\theta}{2})\sin^2(\frac{\beta+\theta}{2})}{\sin(\beta-\theta)\sin(\epsilon+\delta+\beta)\sin(\beta)}
>0.
$$
\end{enumerate}
\end{proof}
Before stating the main \rems{l}emma of this section, we introduce the following indicator 
variable~$\chi$, which is associated with the results of Lemma~\ref{lem:poleBrownian} and Lemma~\ref{lem:condgeom}. 
\begin{equation}
\chi:=
\begin{cases}
-1 & \text{if } 2\delta -\theta +\beta> 2\pi 
\Leftrightarrow x_1 \text{ is a pole of } \psi_1 \text{ in } \mathcal{G},
\\
0 & \text{if } 
2\delta -\theta +\beta \leqslant 2\pi 
\Leftrightarrow \psi_1 \text{ has no pole but } 0 \text{ in } \mathcal{G}.
\end{cases}
\label{eq:chi}
\end{equation}
\begin{lemma}[Index]
The index $\kappa$ satisfies
$$
\kappa:=
\begin{cases}
\chi & \text{if } \epsilon+\delta +\beta\geqslant 2\pi ,
\\
\chi-1 & \text{if } 
\epsilon+\delta +\beta< 2\pi  .
\end{cases}
$$
The value of the index appears below in Table~\ref{table}.
\label{lem:index}
\end{lemma}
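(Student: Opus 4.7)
The plan is to determine $\kappa = \lfloor(d+\Delta)/(2\pi)\rfloor$ by combining the congruence $(d+\Delta)/2 \equiv \epsilon+\delta+\beta \pmod{\pi}$ derived from Lemma \ref{lem:techniqueangle} with the absolute bounds on $d+\Delta$ afforded by Lemma \ref{lem:ytilde}: we have $d+\Delta \in (-2\pi, 2\pi)$ when $\widetilde{y} \leq y^+$ (or $1-(r_1+2\rho)(r_2+2\rho)=0$) and $d+\Delta \in (-4\pi, 4\pi)$ otherwise. A key observation that makes the geometry transparent is that $|G(x)| = 1$ for every $x \in \mathcal{H}$, since $G(x) = (k_1 \bar k_2)/\overline{(k_1 \bar k_2)}$ when the $k_i$'s are evaluated along $\mathcal{H}$; so the variation $\Delta$ is controlled by the winding of $G$ around $0$ as $x$ traverses $\mathcal{H}^+$.

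The case decomposition proceeds in four parts indexed by $\chi$ and the sign of $\epsilon+\delta+\beta - 2\pi$. For the subcases with $\chi = -1$, I would invoke Lemma \ref{lem:techny}: when $\epsilon+\delta+\beta \geq 2\pi$ it gives $\widetilde{y} \leq y^+$, so $d+\Delta \in (-2\pi, 2\pi)$ and the representative consistent with the congruence is $2(\epsilon+\delta+\beta) - 6\pi \in [-2\pi, 0)$, yielding $\kappa = -1$; when $\epsilon+\delta+\beta < 2\pi$, Lemma \ref{lem:techny} gives $\widetilde{y} > y^+$ and in the larger interval $(-4\pi, 4\pi)$ the correct residue is $2(\epsilon+\delta+\beta) - 6\pi \in (-4\pi, -2\pi)$, yielding $\kappa = -2$. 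For the subcases with $\chi = 0$, a parallel trigonometric manipulation of \eqref{eq:defytilde} and \eqref{eq:y+angle} in the regime $2\delta - \theta + \beta \leq 2\pi$ (extending the computation behind Lemma \ref{lem:techny}) would show that $\widetilde{y} \leq y^+$ still holds, placing $d+\Delta$ in $(-2\pi, 2\pi)$ with the two candidate residues $2(\epsilon+\delta+\beta) - 4\pi$ or $2(\epsilon+\delta+\beta) - 2\pi$.

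The main obstacle is the residual ambiguity left by the congruence within these bounded intervals: an interval of length $4\pi$ with period $2\pi$ leaves two residues, while length $8\pi$ leaves four. To resolve it I would examine the local direction of $\arg G(x)$ near the real endpoint $x = X^+(y^+)$, where $d = 0$ (or $d = \pi$ in the degenerate case $2\delta - \theta + \beta = 2\pi$). Since $|G| = 1$ on $\mathcal{H}$, the sign of the derivative of the lifted argument as $x$ enters $\mathcal{H}^+$ determines whether $\Delta$ starts positive or negative; this sign can be read off from the sign of $k_1(X^\pm(y^+), y^+)$, which is positive when $\chi = 0$ and negative when $\chi = -1$ by Lemma \ref{lem:condgeom}. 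Combining this with the count of interior solutions to $G(x) = 1$ provided by Lemma \ref{lem:ytilde} (zero when $\widetilde{y} \leq y^+$, forcing the lifted argument to remain within one open half-period; exactly one when $\widetilde{y} > y^+$, forcing exactly one full winding), the wrapping number is pinned down unambiguously, and the four resulting values of $\kappa$ assemble into the table announced in the statement.
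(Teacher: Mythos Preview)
Your overall strategy mirrors the paper's: combine the congruence from Lemma~\ref{lem:techniqueangle}, the range bounds from Lemma~\ref{lem:ytilde}, and a local analysis of $\arg G$ near $X^\pm(y^+)$ to pin down $\kappa$. The observation $|G|\equiv 1$ on $\mathcal{H}$ is correct and useful. However, two of your concrete assertions are wrong, and they are exactly the places where the paper has to work harder.

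\textbf{First gap.} You claim that when $\chi=0$ (i.e.\ $2\delta-\theta+\beta\leqslant 2\pi$) a ``parallel trigonometric manipulation'' forces $\widetilde y\leqslant y^+$. This is false. Lemma~\ref{lem:techny} is stated only under the hypothesis $2\delta-\theta+\beta>2\pi$; its proof uses that $\sin(\delta-\theta+\beta)<0$, which fails in general when $\chi=0$. Concretely, with $\beta=\pi/2$, $\theta=\pi/4$, $\delta=3\pi/4-0.1$, $\epsilon=3\pi/4+0.2$ one checks $\chi=0$ yet $\widetilde y>y^+$. The paper does \emph{not} attempt your extrapolation: its Case~II ($\widetilde y>y^+$) explicitly treats the two subcases with $2\delta-\theta+\beta\leqslant 2\pi$ (Figures~\ref{subfigB} and~\ref{subfigC}). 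So your reduction to the short interval $(-2\pi,2\pi)$ for $\chi=0$ is unjustified, and you must confront four candidate residues in that case as well.

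\textbf{Second gap.} You assert that the sign of the initial increment of $\arg G$ as $x$ leaves $X^\pm(y^+)$ along $\mathcal H^+$ ``can be read off from the sign of $k_1(X^\pm(y^+),y^+)$''. The paper's explicit computation of $k_1(a+ib,y)\,k_2(a-ib,y)$ shows the imaginary part is $-\tfrac{b}{4}\bigl(1-(r_1+2\rho)(r_2+2\rho)\bigr)(y-\widetilde y)$, so the initial sign of $\arg G$ depends jointly on the sign of $k_1$ (through the real part), on $1-(r_1+2\rho)(r_2+2\rho)$ (equivalently on $\epsilon+\delta+\beta-2\pi$), and on $y-\widetilde y$. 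Your single-factor criterion is therefore incorrect; the paper obtains $\mathrm{sgn}\arg G=-\mathrm{sgn}(2\delta-\theta+\beta-2\pi)\,\mathrm{sgn}(\epsilon+\delta+\beta-2\pi)$ in Case~I, and in Case~II it needs in addition the sign of the \emph{real} part of $k_1/k_2$ near the start and the positivity of $(k_1/k_2)(X^\pm(\widetilde y),\widetilde y)$ to disambiguate among the four residues. Your final heuristic (``one interior solution forces exactly one full winding'') is also too coarse: an interior crossing of $G=1$ does not by itself exclude a crossing-and-return, which is why the paper tracks both coordinates of the curve $\mathcal C$.
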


\renewcommand{\arraystretch}{1.5}
\begin{table}[h!]
\begin{tabular}{|c|c|c|}
\hline 
 & $\epsilon+\delta+\beta\geqslant 2\pi$ & $\epsilon+\delta+\beta< 2\pi$ \\ 
\hline 
$2\delta-\theta+\beta>2\pi$ & $\kappa=-1$ & $\kappa=-2$ \\ 
\hline 
$2\delta-\theta+\beta\leqslant2\pi$ & $\kappa=0$ & $\kappa=-1$ \\ 
\hline 
\end{tabular}
\medskip
\caption{Value of the index $\kappa$.}
\label{table}
\end{table} 
\rems{
\begin{remark}[Index and argument principle] Notice that the index can take the values $0$, $-1$ and $-2$ while in \cite[Lemma 14]{franceschi_2019} the index takes only the values $0$ and $-1$. The difference comes from the fact that $\psi_1$ can have two distinct poles while in \cite{franceschi_2019} the Laplace transform has at most one simple pole. The index is deeply connected to number of zeros and poles of $\psi_1$. In the case of a closed curve, the argument principle implies that the index is equal to the number of zeros minus the number of poles counted with multiplicity of the function of the BVP. See \cite[Lemma 6.9]{fomichov2020probability} which presents a case where the boundary of the 
BVP is a circle. In our case, the boundary is an (unbounded) hyperbola and $\psi_1$ is not meromorphic at infinity, therefore we cannot apply directly the argument 
principle and the index $\kappa$ is not always equal to the opposite of the number of poles $\chi$.
\end{remark}}
\begin{proof}
The proof proceeds with two separate cases.\\\\
\underline{Case I: $\widetilde{y}-y^+ \leqslant 0$.} In this case, by Lemma~\ref{lem:ytilde}, we have that ${d+\Delta}\in (-2\pi,2\pi)$ and that $G(x)\neq 1$ for all $x\in\mathcal{H}$ such that $x\neq X^\pm(y^+)$. Then $\kappa=0$ or $-1$ depending on the sign of $d+\Delta$. This sign is given by 
the sign of $\arg G(x)$ when $x\in\mathcal{H}^+$ and $x\to X^\pm(y^+)$. Note that $x=a+ib\in\mathcal{H}^+$ and $y=Y^+(x)$. We then compute
$$
k_1(a+ib,y)k_2(\overline{a+ib},y)=k_1(a,y)k_2(a,y) +\frac{b^2}{4}(r_2+2\rho) - i \frac{b}{4}(1-(r_1+2\rho)(r_2+2\rho))(y-\widetilde{y}).
$$
Figure~\ref{fignum} represents the curve $\mathcal{C}:=\{  \frac{k_1}{k_2} (x,Y^+(x)) : x\in\mathcal{H} \}$. It is useful to remark that $\arg \frac{k_1}{k_2}(x,Y^+(x)) =\arg k_1(x,Y^+(x))/k_2(\overline{x},Y^+(x))$.
We may thus deduce that 
\begin{align*}
\text{sgn} \arg G(x) &= \text{sgn} \arg (k_1(a+ib,y){k_2(a-ib,y)})
\\ &=\text{sgn} \frac{-b(1-(r_1+2\rho)(r_2+2\rho))(y-\widetilde{y})}{k_1(a,y)k_2(a,y) +\frac{b^2}{4}(r_2+2\rho)}.
\end{align*}
For $x\in\mathcal{H}^+$, we have $k_2(X^\pm(y^+),y^+)>0$, $b>0$. When $x\to X^\pm(y^+)$, we have that $b\to 0$ and $a\to X^\pm(y^+)$. Thus for $x\in\mathcal{H}^+$ and $x\to X^\pm(y^+)$,
\begin{align*}
\text{sgn} \arg G(x) &= 
- \text{sgn} (k_1(X^\pm(y^+),y^+)(1-(r_1+2\rho)(r_2+2\rho))(y-\widetilde{y}) )
\\ &= -\text{sgn} (2\delta-\theta+\beta-2\pi)\text{sgn}(\epsilon+\delta+\beta-2\pi),
\end{align*}
where the last equality comes from Lemmas~\ref{lem:condgeom} and \ref{lem:techniqueangle}, as well as from the fact that in this case $y-\widetilde{y}>0$ for $y> y^+$. This allows us to conclude the following
\begin{itemize}
\item If $\epsilon+\delta+\beta \geqslant 2\pi$ and $2\delta-\theta+\beta>2\pi$, then for $x\in\mathcal{H}^+$ and $x\to X^\pm(y^+)$, the sign of $ \arg G(x) $ is negative. We may thus deduce that $\kappa=-1$, see Figure~\ref{subfig1}.
\item If $\epsilon+\delta+\beta \geqslant 2\pi$ and $2\delta-\theta+\beta\leqslant 2\pi$, then for $x\in\mathcal{H}^+$ and $x\to X^\pm(y^+)$, the sign of $ \arg G(x) $ is positive. We may thus deduce that $\kappa=0$, see Figure~\ref{subfig2}. 
\item If $\epsilon+\delta+\beta < 2\pi$ and $2\delta-\theta+\beta\leqslant 2\pi$, then for $x\in\mathcal{H}^+$ and $x\to X^\pm(y^+)$,  the sign of $ \arg G(x) $ is positive. We may thus deduce that $\kappa=-1$, see Figure~\ref{subfig3}.
\end{itemize}
We pause to note that by Lemma~\ref{lem:techny} it is not possible to have $\epsilon+\delta+\beta < 2\pi$ and $2\delta-\theta+\beta>2\pi$. This is because we have assumed $\widetilde{y}\leqslant y^+$.\\

\noindent \underline{Case II: $\widetilde{y}-y^+ > 0$.} In this case by, Lemma~\ref{lem:ytilde} we have that ${d+\Delta}\in (-4\pi,4\pi)$ and $(G(x)=1 \text{ and } x\in\mathcal{H} ) \Leftrightarrow ( x=X^\pm(y^+) \text{ or }x=X^\pm(\widetilde{y}) ) $. Then $\kappa \in\{ -2,-1,0,1 \}$. To obtain the value of the index we study the curve $\mathcal{C}:=\{  \frac{k_1}{k_2} (x,Y^+(x)) : x\in\mathcal{H} \}$. 
By straightforward calculations we see that $\widetilde{A}:=\frac{k_1}{k_2}(X^\pm(\widetilde{y}),\widetilde{y})$ is positive. The study of the sign of the real and the imaginary parts of $ \frac{k_1}{k_2}(x,Y^+(x))$ for $x\in\mathcal{H}^+$ and $x\to X^\pm(y^+)$ gives the value of $\kappa$. Following the same logic as that of Case I above, we see that \remst{that }the real part of $ \frac{k_1}{k_2}(x,Y^+(x))$ for $x\in\mathcal{H}^+$ and $x\to X^\pm(y^+)$ has the same sign as $-(2\delta-\theta+\beta-2\pi)$. Further, the imaginary part \rems{has} the same sign that $-(\epsilon+\delta+\beta-2\pi)$. We may then conclude as follows:
\begin{itemize}
\item If $\epsilon+\delta+\beta < 2\pi$ and $2\delta-\theta+\beta>2\pi$, $\kappa=-2$, see Figure~\ref{subfigA}.
\item If $\epsilon+\delta+\beta \geqslant 2\pi$ and $2\delta-\theta+\beta\leqslant 2\pi$, $\kappa=0$, see Figure~\ref{subfigB}.
\item If $\epsilon+\delta+\beta < 2\pi$ and $2\delta-\theta+\beta\leqslant 2\pi$, $\kappa=-1$, see Figure~\ref{subfigC}.
\end{itemize}
\item Note that by Lemma~\ref{lem:techny} it is not possible to have $\epsilon+\delta+\beta \geqslant 2\pi$ and $2\delta-\theta+\beta>2\pi$. This is because we have assumed that $\widetilde{y}> y^+$. 

\begin{figure}[p]
    \centering
    \begin{subfigure}[b]{.3\linewidth}
        \centering
        \includegraphics[trim=0.5cm 0.5cm 0.5cm 0.5cm ,clip=true,width=\textwidth]{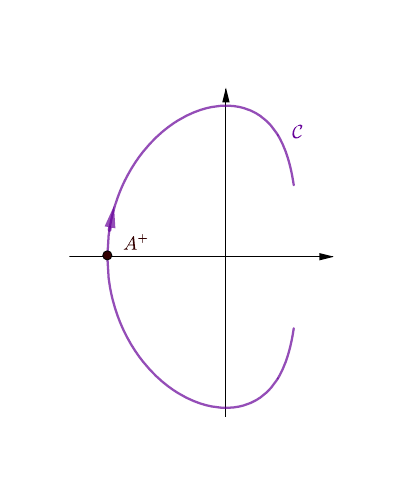}
        \caption{If $\epsilon+\delta+\beta \geqslant 2\pi$ and $2\delta-\theta+\beta>2\pi$, then $\kappa=-1$.}
        \label{subfig1}
    \end{subfigure}
    \begin{subfigure}[b]{.3\linewidth}
        \centering
        \includegraphics[trim=0.5cm 0.5cm 0.5cm 0.5cm ,clip=true, width=\textwidth]{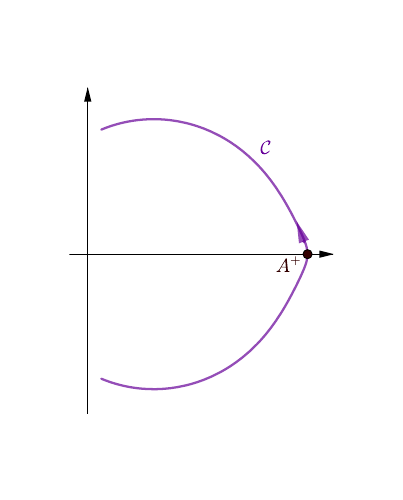}
    \caption{If $\epsilon+\delta+\beta \geqslant 2\pi$ and $2\delta-\theta+\beta\leqslant 2\pi$, then $\kappa=0$.}
    \label{subfig2}
    \end{subfigure}
     \begin{subfigure}[b]{.3\linewidth}
        \centering
        \includegraphics[trim=0.5cm 0.5cm 0.5cm 0.5cm ,clip=true,width=\textwidth]{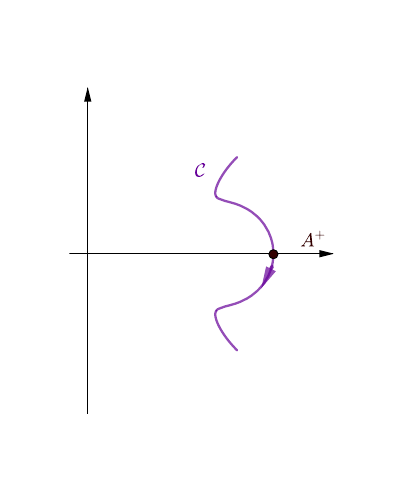}
    \caption{If $\epsilon+\delta+\beta < 2\pi$ and $2\delta-\theta+\beta\leqslant 2\pi$, then $\kappa=-1$.}
    \label{subfig3}
    \end{subfigure}
    \caption{When $\widetilde{y}-y^+ \leqslant 0$: a plot of the curve $\mathcal{C}:=\{\frac{k_1}{k_2}(x,Y^+(x)) : x\in\mathcal{H} \}$ and the point $A^+:= \frac{k_1}{k_2}(X^+(y^+),y^+) $.}
    \label{fignum}
\end{figure} 

\begin{figure}[p]
    \centering
    \begin{subfigure}[b]{.45\linewidth}
        \centering
        \includegraphics[trim=0.5cm 0.5cm 0.5cm 0.5cm ,clip=true,width=\textwidth]{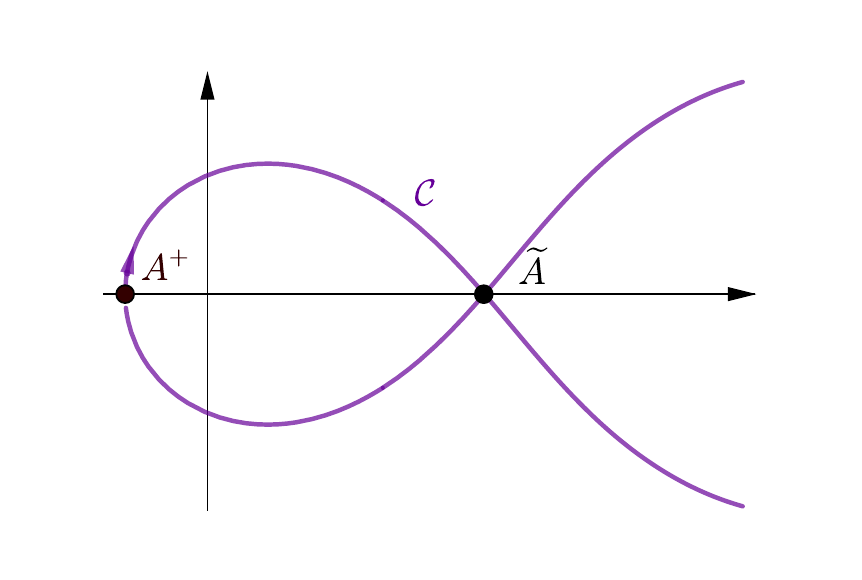}
        \caption{If $\epsilon+\delta+\beta < 2\pi$ and $2\delta-\theta+\beta>2\pi$,\\ then $\kappa=-2$.}
        \label{subfigA}
    \end{subfigure}
    \\
    \begin{subfigure}[b]{0.45\linewidth}
        \centering
        \includegraphics[trim=0.5cm 0.5cm 0.5cm 0.5cm ,clip=true,width=\textwidth]{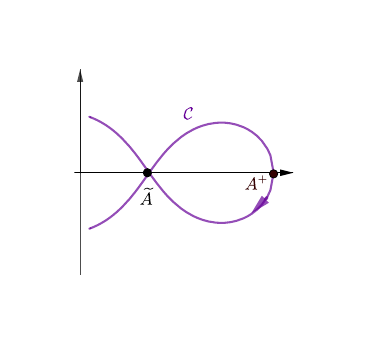}
    \caption{If $\epsilon+\delta+\beta \geqslant 2\pi$ and $2\delta-\theta+\beta\leqslant 2\pi$,\\ then $\kappa=0$.}
    \label{subfigB}
    \end{subfigure}
     \begin{subfigure}[b]{.45\linewidth}
        \centering
        \includegraphics[trim=0.5cm 0.5cm 0.5cm 0.5cm ,clip=true,width=\textwidth]{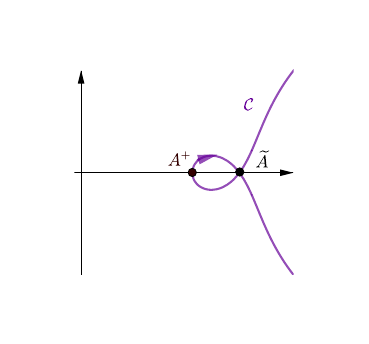}
    \caption{If $\epsilon+\delta+\beta < 2\pi$ and $2\delta-\theta+\beta\leqslant 2\pi$, \\ then $\kappa=-1$.}
    \label{subfigC}
    \end{subfigure}
    \caption{When $\widetilde{y}-y^+ > 0$: a plot of the curve $\mathcal{C}:=\{ \frac{k_1}{k_2}(x,Y^+(x))  : x\in\mathcal{H} \}$, the point $A^+:=\frac{k_1}{k_2}(X^+(y^+),y^+), $ and the point $\widetilde{A}:=\frac{k_1}{k_2}(X^+(\widetilde{y}),\widetilde{y}) $.}
    \label{figlettre}
\end{figure} 
\end{proof}

We now state a technical lemma which shall be invoked in Section~\ref{sec:asympt0}.
\begin{lemma}
The following equality holds
$$
\left(-\frac{d+\Delta}{2\pi} +\chi  -1\right)\frac{\pi}{\beta} 
=
-\alpha-1.
$$
\label{lem:chiDelta}
\end{lemma}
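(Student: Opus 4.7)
The plan is to pin down $d+\Delta$ explicitly using Lemmas~\ref{lem:techniqueangle} and \ref{lem:index}, then carry out a short algebraic rearrangement. The identity $\tan\frac{d+\Delta}{2} = \tan(\epsilon+\delta+\beta)$ in Lemma~\ref{lem:techniqueangle} implies that there exists an integer $n$ with
\[
\frac{d+\Delta}{2\pi} \;=\; \frac{\epsilon+\delta+\beta}{\pi} + n.
\]

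Second, I would determine $n$ by comparing this with the value of $\kappa = \lfloor (d+\Delta)/(2\pi)\rfloor$ computed in Lemma~\ref{lem:index}. The parameter constraints $0<\beta<\delta,\epsilon<\pi$ together with $\alpha \geqslant 1$ (that is, $\delta+\epsilon-\beta \geqslant \pi$) yield $\epsilon+\delta+\beta \in (\pi,3\pi)$. Hence $\lfloor (\epsilon+\delta+\beta)/\pi\rfloor$ equals $1$ when $\epsilon+\delta+\beta < 2\pi$ and $2$ when $\epsilon+\delta+\beta \geqslant 2\pi$. Confronting this with Table~\ref{table} (where $\kappa = \chi$ in the latter case and $\kappa = \chi-1$ in the former), both cases consistently give $n = \chi - 2$.

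Finally, substituting $\frac{d+\Delta}{2\pi} = \frac{\epsilon+\delta+\beta}{\pi} + \chi - 2$ into the left-hand side cancels $\chi$, leaving
\[
\left(-\frac{d+\Delta}{2\pi} + \chi - 1\right)\frac{\pi}{\beta} \;=\; \left(1 - \frac{\epsilon+\delta+\beta}{\pi}\right)\frac{\pi}{\beta} \;=\; -\frac{\epsilon+\delta-\pi}{\beta} - 1 \;=\; -\alpha - 1,
\]
by the definition \eqref{eq:alpha} of $\alpha$. The only mildly delicate point is the boundary locus $\epsilon+\delta+\beta = 2\pi$, which can be dispatched either by direct inspection (both rows of the relevant column of Table~\ref{table} agree modulo the convention $\kappa = \lfloor \cdot \rfloor$) or by a continuity argument in the parameters, so I do not expect this to be a substantive obstacle.
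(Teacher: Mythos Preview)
Your proposal is correct and follows essentially the same route as the paper's proof: both use the identity $\tan\frac{d+\Delta}{2}=\tan(\epsilon+\delta+\beta)$ from Lemma~\ref{lem:techniqueangle} together with the value of $\kappa$ from Lemma~\ref{lem:index} to pin down $\frac{d+\Delta}{2}=\epsilon+\delta+\beta+(\chi-2)\pi$, and then substitute. Your packaging via the integer $n$ is slightly more streamlined than the paper's explicit sub-case enumeration, and the boundary case $\epsilon+\delta+\beta=2\pi$ is in fact handled directly by your floor computation (since $\lfloor 2\rfloor=2$ and $\kappa=\chi$ there), so no separate continuity argument is needed.
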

\begin{proof}
First, we recall by Lemma~\ref{lem:techniqueangle} that
$$
\tan \frac{d+\Delta}{2} = \tan (\epsilon+\delta+\beta).
$$
For $\alpha=\frac{\epsilon+\delta-\pi}{\beta} \geqslant 1$ and $\epsilon,\delta$ and $\beta\in(0,\pi)$, 
$$
2\beta-\pi\leqslant\epsilon+\delta+\beta-2\pi<\pi.
$$
Further, recall that by definition, $\kappa=\lfloor \frac{d+\Delta}{2\pi} \rfloor $.\\
\indent We now consider two cases for the value of $\epsilon+\delta+\beta-2\pi$. The first case considers $\epsilon+\delta+\beta-2\pi\geqslant 0$. In this case, 
$$ \frac{d+\Delta}{2} =
\begin{cases}
\epsilon+\delta+\beta-2\pi & \text{if } \frac{d+\Delta}{2\pi}\geqslant 0 \text{ i.e. } \kappa=0,
\\ 
\epsilon+\delta+\beta-3\pi & \text{if } \frac{d+\Delta}{2\pi}<0 \text{ i.e. } \kappa=-1.
\end{cases}
$$ By Lemma~\ref{lem:index}, we have $\kappa=\chi$. We may thus deduce that
$$
\frac{d+\Delta}{2} = \epsilon+\delta+\beta +(\chi-2)\pi.
$$
The second case considers $\epsilon+\delta+\beta-2\pi<0$. In this case,
$$ \frac{d+\Delta}{2} =
\begin{cases}
\epsilon+\delta+\beta-2\pi & \text{if } -\pi\leqslant\frac{d+\Delta}{2\pi}<0 \text{ i.e. } \kappa=-1,
\\ 
\epsilon+\delta+\beta-3\pi & \text{if } -2\pi\leqslant\frac{d+\Delta}{2\pi}<-\pi \text{ i.e. } \kappa=-2.
\end{cases}
$$
By Lemma~\ref{lem:index}, we have $\kappa=\chi-1$. We may thus deduce that
$$
\frac{d+\Delta}{2} = \epsilon+\delta+\beta +(\chi-2)\pi.
$$
Thus, in both cases we have
\begin{align*}
\left(-\frac{d+\Delta}{2\pi} +\chi  -1\right)\frac{\pi}{\beta} 
&= \left(-\epsilon-\delta-\beta-(\chi-2)\pi +\chi \pi -\pi\right)\frac{1}{\beta} 
=
-\alpha-1.
\end{align*}
This concludes the proof.
\end{proof}

\subsection{Solution of the BVP}

\rems{The following theorem gives an explicit integral formula for the Laplace transform of the escape probability $\psi_1$.}

\begin{theorem}[Explicit expression for $\psi_1$]
\label{thm:explicitF1Brown}
The Laplace transform $\psi_1$ is given for $x\in\mcG$ by
\begin{equation}
\label{eq:explicitexpressionBrownian}
\psi_1(x)= \dfrac{w'(0)}{w(x)-w(0)} \left(\dfrac{w(0)-w(x_1)}{w(x)-w(x_1)}\right)^{-\chi} \exp\left(\dfrac{1}{2i\pi} \int_{\mcH^+} \log G(t)
\left[ \dfrac{w'(t)}{w(t)-w(x)}
-\dfrac{w'(t)}{w(t)-w(0)} \right]
\D t\right) ,
\end{equation}
where $x_1$ is defined in \eqref{eq:x1}, $G$ is defined~\eqref{eq:Gdefbrownien}, $w$ is defined~\eqref{eq:wdefBrown}, $\chi$ is defined in~\eqref{eq:chi} and $\mathcal{H}$ is defined in \eqref{eq:hyperbole}.
\end{theorem}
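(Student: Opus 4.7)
The plan is to reduce the Carleman BVP of Proposition~\ref{prop:BVPBrownian} to a standard Riemann--Hilbert jump problem on the slit $[0,1]$ via the conformal gluing function $w$ of Lemma~\ref{lem:gluingwBrownien}, solve the latter by a Sokhotski--Plemelj representation, and pin down the resulting constants by the two normalizations $\lim_{x\to 0}x\psi_1(x)=1$ (Lemma~\ref{lem:poleBrownian}) and $\lim_{x\to\infty}x\psi_1(x)=0$ (item~\ref{item:2a} of Proposition~\ref{prop:BVPBrownian}).

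First I remove the poles of $\psi_1$. By Lemma~\ref{lem:poleBrownian}, the only poles of $\psi_1$ in $\overline{\mathcal{G}}$ are a simple pole at $0$ and, precisely when $\chi=-1$, a simple pole at $x_1$. Setting
\[
\phi(x) := \psi_1(x)\,\frac{w(x)-w(0)}{w'(0)}\left(\frac{w(0)-w(x_1)}{w(x)-w(x_1)}\right)^{\chi},
\]
the conformality of $w$ at $0$ and at $x_1$ ensures that each rational factor contributes a simple zero of the right order, so $\phi$ is holomorphic on $\mathcal{G}$ and continuous on $\overline{\mathcal{G}}$. The gluing identity $w(x)=w(\overline{x})$ on $\mathcal{H}$ renders both rational prefactors invariant under $x\mapsto\overline{x}$, so $\phi$ inherits the same Carleman condition $\phi(\overline{x})=G(x)\phi(x)$.

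Next I transfer the problem to the $w$-plane. Setting $\Phi(w(x)):=\phi(x)$, Lemma~\ref{lem:gluingwBrownien} shows that $\Phi$ is holomorphic on $\mathbb{C}\setminus[0,1]$ and continuous up to the slit, while each pair of conjugate points $x,\overline{x}\in\mathcal{H}$ is identified as the two sides of a single point of $[0,1]$. The Carleman condition becomes the multiplicative jump $\Phi^-(t)=\widetilde{G}(t)\Phi^+(t)$ on $[0,1]$, with $\widetilde{G}(t):=G(w_+^{-1}(t))$. Passing to logarithms yields the additive jump $\log\Phi^--\log\Phi^+=\log\widetilde{G}$, which the Sokhotski--Plemelj formula resolves as
\[
\log\Phi(W)=\frac{1}{2i\pi}\int_0^1\frac{\log\widetilde{G}(t)}{t-W}\,dt+C.
\]
Changing variables $t=w(s)$ with $s\in\mathcal{H}^+$ turns this into a Cauchy-type integral over $\mathcal{H}^+$ with differential $w'(s)\,ds$, and exponentiation together with the definition of $\phi$ reconstitutes~\eqref{eq:explicitexpressionBrownian} up to a multiplicative constant. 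This constant is pinned by evaluating at $x=0$: the requirement $\lim_{x\to 0}x\psi_1(x)=1$ forces $\phi(0)=1$, which is precisely arranged by the subtracted second kernel $w'(t)/(w(t)-w(0))$ in the integrand, making the integral vanish at $x=0$. The decay $\lim_{x\to\infty}x\psi_1(x)=0$ follows automatically since $w(x)$ grows like $x^{\pi/\beta}$ with $\pi/\beta>1$ while the exponential factor remains bounded at infinity.

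\textbf{Main obstacle.} The delicate step is justifying the logarithmic reduction: $\log\widetilde{G}$ must admit a single-valued continuous determination on $[0,1]$, equivalently the winding of $G$ along $\mathcal{H}^+$ (the index $\kappa$ computed in Lemma~\ref{lem:index}) must be exactly compensated by the winding contributed by the pole-removing factors $w(x)-w(0)$ and $(w(x)-w(x_1))^{-\chi}$. The identity of Lemma~\ref{lem:chiDelta}, relating $d+\Delta$, $\chi$ and $\alpha$, is tailored precisely to this matching and is what validates the particular exponent $-\chi$ appearing in~\eqref{eq:explicitexpressionBrownian}.
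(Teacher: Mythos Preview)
Your overall strategy---reduce the Carleman BVP to a scalar Riemann--Hilbert problem on a slit via the conformal gluing map, solve by a Cauchy/Plemelj representation, and fix the constant with $\lim_{x\to 0}x\psi_1(x)=1$---is exactly the paper's route. However, there is a genuine gap in how you handle the index.

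You correctly observe that the rational prefactors $w(x)-w(0)$ and $\bigl(w(x)-w(x_1)\bigr)^{-\chi}$ are invariant under $x\mapsto\overline{x}$ on $\mathcal H$. But precisely because of that invariance, they do \emph{not} modify the jump: your $\Phi$ still satisfies $\Phi^-=\widetilde G\,\Phi^+$ with the \emph{same} $\widetilde G$, hence with the same index $\kappa$. By Lemma~\ref{lem:index} one has $\kappa\in\{-2,-1,0\}$ and in general $\kappa\neq 0$, so $\log\widetilde G$ has no single-valued continuous branch on $[0,1]$ and the direct Sokhotski--Plemelj step you write down is not justified. Your appeal to Lemma~\ref{lem:chiDelta} is misplaced: that identity is used in Section~\ref{sec:asympt0} to read off the asymptotic exponent of $\psi_1$, not to balance the index here.

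The paper closes this gap by inserting an explicit index-correcting factor at the endpoint $y=1$ (i.e.\ at $x=\infty$): it works with $\widetilde\psi_1(y)=\dfrac{(y-W(x_1))^{-\chi}}{(y-1)^{1+\kappa-\chi}}\,\psi_1\circ W^{-1}(y)$ and with the canonical solution $\widetilde S(y)=(y-1)^{-\kappa}\exp\!\Bigl(\tfrac{1}{2i\pi}\int_0^1\tfrac{\log\widetilde G(u)}{u-y}\,du\Bigr)$, then shows $\widetilde\psi_1/\widetilde S$ is entire and bounded at infinity, hence constant by Liouville. The powers of $(y-1)$ cancel in the final formula, which is why $\kappa$ is invisible in~\eqref{eq:explicitexpressionBrownian}, but they are essential in the proof. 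Your argument is missing both this $(y-1)^{-\kappa}$ device and the Liouville step; as written it would only go through in the single case $\kappa=0$.
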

\rems{\begin{remark}
Let us now give some remarks about Theorem~\ref{thm:explicitF1Brown}.
\begin{itemize}
\item The poles $0$ and $x_1$ found in Lemma~\ref{lem:poleBrownian} can be easily visualized in the formula of Theorem~\ref{thm:explicitF1Brown}. The indicator variable $\chi$ defined in~\eqref{eq:chi} indicates clearly on the formula if the pole $x_1$ is in $\mathcal{G}$ or not.
\item A symmetrical result holds for $\psi_2$. Using the functional equation~\eqref{eq:functionalequation} we obtain an explicit formula for $\psi$. By inverting this Laplace transform we obtain the escape probability which is the main motivation of our work. But such an inversion is not easy neither very explicit except in some special cases. 
\item But it is still possible to deduce some concrete results from the integral formula obtained in Theorem~\ref{thm:explicitF1Brown}. In Section~\ref{sec:asympt0} we derive thanks to this explicit expression a very explicit and simple expression for the asymptotics of the escape probability at the origin. 
\item It can also be used to show some differential properties of the Laplace transform. More precisely, similarly to \cite[Thm 2.3, \S 9.1]{BM2021stationary} we can show that $\psi_1$ is differentially algebraic if $\beta\in\pi \mathbb{Q}$. Such results on the algebraic nature of a generating function are very classical in analytic combinatorics to obtain concrete results. When $\psi_1$ is differentially algebraic, it satisfies a differential equation from which it is possible to deduce
a polynomial recurrence relation for the moments of the escape/absorption probability. See \cite[\S 6.3]{BM2021stationary} which gives an explicit example for the SRBM stationary distribution in the recurrent case.
\item The methods and techniques employed to prove this theorem are inspired by the one used to study random walks in the quarter plane \cite{fayolle_random_2017}.
\end{itemize}
\end{remark}}

\begin{proof}
Let $$\widetilde{\psi}_1 (y):=\frac{(y- W(x_1))^{-\chi}}{(y-1)^{1+\kappa-\chi}} \psi_1 \circ W^{-1} (y).$$
Proposition~\ref{prop:BVPBrownian}, Lemma~\ref{lem:gluingwBrownien} and Lemma~\ref{lem:index} together imply that
\begin{itemize}
\item $\widetilde{\psi}_1$ is analytic on $\mathbb{C}\setminus [0,1]$.
\item $\widetilde{\psi}_1 (y) \sim_{\infty} c y^{-\kappa}$ for some constant $c$.
\item $\widetilde{\psi}_1 (1) =0$.
\item For $y\in [0,1]$, $\widetilde{\psi}_1$ satisfies the boundary condition
$$
\widetilde{\psi}_1^+ (y) = \widetilde{G} (y)  \widetilde{\psi}_1^-  (y),
$$
where $\widetilde{\psi}_1^+ (y)$ is the left limit and $\widetilde{\psi}_1^- (y)$ is the right limit of $\widetilde{\psi}_1$ on $[0,1]$, $(W^{-1})^-$ is the right limit of $W^{-1}$ on $[0,1]$, and $\widetilde{G} (y) = G \circ (W^{-1})^- (y)$.
\end{itemize}
We now define
$$
\widetilde{S}(y):= (y-1)^{-\kappa} \exp \left( \frac{1}{2i\pi} \int_0^1 \frac{\log \widetilde{G} (u)}{u-y} \right).
$$
Following the classical boundary theory results in \cite[(5.2.24) and
Theorem 5.2.3]{fayolle_random_2017}, the above function is analytic and does not cancel on $\mathbb{C}\setminus [0,1]$ and is such that $\widetilde{S} (y) \sim_{\infty} c' y^{-\kappa}$ for some constant $c'$. Furthermore, for $y\in[0,1]$, it satisfies the boundary condition
$$
\widetilde{S}^+ (y) = \widetilde{G} (y)  \widetilde{S}^-  (y),
$$
where $\widetilde{S}^+ (y)$ is the left limit and $\widetilde{S}^- (y)$ is the right limit of $\widetilde{S}$ on $[0,1]$. 
By the properties of $\widetilde{\psi}_1$ and $\widetilde{S}$ detailed above, the function
$
\widetilde{\psi}_1 / \widetilde{S}
$
is analytic on $\mathbb{C}$ and bounded at infinity. Therefore there must exist a constant $C$ such that
$$
\widetilde{\psi}_1 (y)=  C\widetilde{S}(y).
$$
Invoking the definition of $\widetilde{\psi}_1$, we have that
\begin{equation}\label{BVPeq}
\frac{(W(x)- W(x_1))^{-\chi}}{(W(x)-1)^{1+\kappa-\chi}} \psi_1 (x) = C (W(x)-1)^{-\kappa} \exp \left( \frac{1}{2i\pi} \int_0^1 \frac{\log \widetilde{G} (u)}{u-W(x)} \right).
\end{equation}
Noting that
$$
W(x)-1=\frac{w(0)-w(X^\pm(y^+))}{w(x)-w(0)}
\quad \text{and} \quad
W(x)-W(x_1)=\frac{w(x)-w(x_1)}{w(x)-w(0)}\frac{w(X^\pm(y^+))-w(0)}{w(x_1)-w(0)},
$$
and making a change of variable $u=w(t)$ in the integral in \eqref{BVPeq}, we obtain for some constant $C'$
$$
\psi_1(x)=C' \left(\dfrac{1}{w(x)-w(0)}\right) \left(\dfrac{1}{w(x)-w(x_1)}\right)^{-\chi} \exp\left(\dfrac{1}{2i\pi}  \int_{\mcH^+} \log G(t)
\dfrac{w'(t)}{w(t)-w(x)}
\D t\right).
$$
The final value theorem for the Laplace transform gives $$\lim_{x\to 0} x \psi_1(x) =  \lim_{u\to\infty}\mathbb{P}_{(u,0)} [T=\infty] =1.$$
\noindent This enables us to compute the constant 
$$C'=w'(0) \left({w(0)-w(x_1)}\right)^{-\chi} \exp\left(\dfrac{-1}{2i\pi}  \int_{\mcH^+} \log G(t)
\dfrac{w'(t)}{w(t)-w(0)}
\D t\right)\rems{,}$$ 
which gives us~\eqref{eq:explicitexpressionBrownian}, completing the proof.
\end{proof}

\section{Asymptotics of the escape probability at the origin}
\label{sec:asympt0}

In this section we use the explicit expression in Theorem~\ref{thm:explicitF1Brown} to obtain the asymptotics of the escape probability at the origin. We begin with computing the asymptotics of $\psi_1$ at infinity. 
\begin{lemma}[Asymptotics of $\psi_1$]
Let $\alpha$ be defined as in \eqref{eq:alpha}. For ease of notation, allow $C$ to be a constant which may change from one line to the next. For some positive constant $C$,
\begin{equation*}
\psi_1(x) \underset{x\to\infty}{\sim} C x^{-\alpha -1}.
\end{equation*}
A \rems{symmetrical} result holds for $\psi_2$. That is, for some positive constant $C$,
\begin{equation*}
\psi_2(y) \underset{y\to\infty}{\sim} C y^{-\alpha -1}.
\end{equation*}
\label{lem:asymptpsi1infty}
\end{lemma}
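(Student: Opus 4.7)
The plan is to extract the leading behavior of $\psi_1(x)$ as $x\to\infty$ directly from the explicit integral representation in Theorem~\ref{thm:explicitF1Brown}, with the exponent eventually produced by combining the growth of the conformal map $w$, the endpoint singularity of the Cauchy-type integral, and the identity in Lemma~\ref{lem:chiDelta}. I would first extract the growth rate of $w$: from the Chebyshev representation $T_a(z)=\tfrac12((z+\sqrt{z^2-1})^a+(z-\sqrt{z^2-1})^a)\sim \tfrac12(2z)^a$ for $|z|\to\infty$ and the definition \eqref{eq:wdefBrown}, one obtains $w(x)\sim C_w\,x^{\pi/\beta}$. This immediately gives the asymptotics of the rational prefactor
$$
\frac{w'(0)}{w(x)-w(0)}\left(\frac{w(0)-w(x_1)}{w(x)-w(x_1)}\right)^{-\chi}\sim C\,x^{(\chi-1)\pi/\beta},
$$
and the change of variable $1-W(x)=\frac{w(X^\pm(y^+))-w(0)}{w(x)-w(0)}$ shows that $W(x)\to 1$ as $x\to\infty$ at rate $1-W(x)\sim C'\,x^{-\pi/\beta}$.

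Next, I would re-express the exponential in Theorem~\ref{thm:explicitF1Brown} in the $W$-variable exactly as in the proof of the theorem itself: after the substitution $u=W(t)$ and absorbing the $x$-independent part into a multiplicative constant, what remains is the Cauchy kernel $\frac{1}{2i\pi}\int_0^1\frac{\log \widetilde G(u)}{u-y}\,du$ with $y=W(x)$ and $\widetilde G=G\circ(W^{-1})^-$. Since $W^{-1}(u)\to\infty$ on $\mathcal{H}^+$ as $u\to 1^-$ and $G(t)\to e^{i(d+\Delta)}$ at infinity on $\mathcal{H}^+$, we have $\log\widetilde G(u)\to i(d+\Delta)$ as $u\to 1^-$. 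Splitting
$$
\int_0^1\frac{\log\widetilde G(u)}{u-y}\,du
= i(d+\Delta)\int_0^1\frac{du}{u-y}
+\int_0^1\frac{\log\widetilde G(u)-i(d+\Delta)}{u-y}\,du,
$$
the second integrand is bounded near $u=1$ (assuming H\"older regularity of $\widetilde G$ at the endpoint, which follows from the smoothness of $G$ on $\mathcal{H}$ and of $W^{-1}$ at $1$) and therefore contributes only $O(1)$. The first integral equals $\log(1-y)+O(1)$ as $y\to 1$, so the exponential factor contributes $\sim C''(1-y)^{(d+\Delta)/(2\pi)}\sim C''\,x^{-(d+\Delta)/(2\beta)}$.

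Multiplying the prefactor from the first paragraph by this endpoint contribution yields
$$
\psi_1(x)\sim C\,x^{\left(\chi-1-\frac{d+\Delta}{2\pi}\right)\frac{\pi}{\beta}},
$$
and Lemma~\ref{lem:chiDelta} identifies the bracketed exponent as $-\alpha-1$, giving $\psi_1(x)\sim C\,x^{-\alpha-1}$. The argument for $\psi_2$ is symmetric, with $\delta,r_1$ and $\epsilon,r_2$ (and the associated angle identities) interchanged; alternatively one could apply the functional equation~\eqref{eq:functionalequation} along the axis $x=0$ to transfer the result from $\psi_1$ to $\psi_2$.

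The main obstacle is the endpoint analysis of the Cauchy integral at $u=1$: one needs (i) a single-valued branch of $\log\widetilde G$ compatible with the continuous determination of $\arg G$ along $\mathcal{H}^+$ giving the total variation $d+\Delta$, so that $\log\widetilde G(1^-)=i(d+\Delta)$ is the correct limit, and (ii) enough regularity of $\widetilde G$ near $u=1$ to justify the $O(1)$ control of the remainder integral. Both are standard for this class of BVPs (cf.~\cite{fayolle_random_2017}), but care is required because $1$ is an endpoint of the cut and because a naive principal-value computation can hide a constant shift that, while harmless for the asymptotic exponent, would obstruct any attempt to also compute the multiplicative constant $C$ explicitly.
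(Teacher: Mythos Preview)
Your proof is correct and follows essentially the same route as the paper: extract the power growth $w(x)\sim Cx^{\pi/\beta}$, combine the rational prefactor contribution $x^{(\chi-1)\pi/\beta}$ with the endpoint behavior of the Cauchy integral $\sim(1-W(x))^{(d+\Delta)/(2\pi)}\sim x^{-(d+\Delta)/(2\beta)}$, and then invoke Lemma~\ref{lem:chiDelta} to identify the resulting exponent as $-\alpha-1$. The only difference is that where the paper cites \cite[(5.2.20)]{fayolle_random_2017} as a black box for the endpoint asymptotics of $\exp\bigl(\tfrac{1}{2i\pi}\int_0^1\frac{\log\widetilde G(u)}{u-y}\,du\bigr)$, you unpack that step by splitting off the constant limit $i(d+\Delta)$ of $\log\widetilde G$ at $u=1$ and controlling the remainder; this is exactly the content of the cited reference, and your caveats about the branch of $\log\widetilde G$ and the H\"older regularity at the endpoint are precisely the hypotheses that reference requires.
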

\begin{proof}

This proof follows the same steps as those of \cite[Prop 19]{franceschi_2019}. The key is to invoke \cite[(5.2.20)]{fayolle_random_2017}, which states that
$$
\exp \left( \frac{1}{2i\pi} \int_0^1 \frac{\log \widetilde{G} (u)}{u-y} \right)
\underset{y\to 1}{\sim}  C (y-1)^{\frac{d+\Delta}{2\pi}}
. $$
Recall that $w(x)\underset{x\to\infty}{\sim} C x^{\frac{\pi}{\beta}}$ and that $W(x)-1\underset{x\to\infty}{\sim} C x^{-\frac{\pi}{\beta}}$. The explicit expressions of $\psi_1$ obtained in \eqref{eq:explicitexpressionBrownian} and in \eqref{BVPeq}
imply that
\begin{equation*}
\psi_1(x) \underset{x\to\infty}{\sim} C x^{(-\frac{d+\Delta}{2\pi} +\chi -1)\frac{\pi}{\beta}}.
\end{equation*}
The proof concludes by invoking Lemma~\ref{lem:chiDelta}, which states that $\left(-\frac{d+\Delta}{2\pi} +\chi -1\right)\frac{\pi}{\beta}=-\alpha-1$. 
\end{proof}

\begin{lemma}[Asymptotics of $\psi$]
Let $\alpha$ defined as in \eqref{eq:alpha}. For $t\in[0,\frac{\pi}{2}]$ and some positive constant $C_t$, 
\begin{equation*}
\psi(r \cos t,r \sin t) \underset{r\to\infty}{\sim} C_t r^{-\alpha -2}.
\end{equation*}
\label{lem:asymppsiinfty}
\end{lemma}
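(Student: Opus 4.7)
The strategy is to use the functional equation of Proposition~\ref{prop:functionaleq} to reduce the two-variable asymptotics to the one-variable asymptotics already established in Lemma~\ref{lem:asymptpsi1infty}. Concretely, for $\Re x>0$ and $\Re y>0$ the functional equation allows us to write
\[
\psi(x,y)=\frac{k_1(x,y)\,\psi_1(x)+k_2(x,y)\,\psi_2(y)}{K(x,y)},
\]
and my plan is to substitute $x=r\cos t$, $y=r\sin t$ and let $r\to\infty$ with $t$ fixed, tracking the order of each factor.

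I would first analyze the kernel. A direct expansion gives
\[
K(r\cos t,r\sin t)=\tfrac{r^2}{2}\bigl(1+\rho\sin 2t\bigr)+r\bigl(\mu_1\cos t+\mu_2\sin t\bigr),
\]
and since $|\rho|<1$ the leading coefficient $\tfrac{1}{2}(1+\rho\sin 2t)$ is strictly positive for every $t\in[0,\pi/2]$, so $K(r\cos t,r\sin t)$ is of exact order $r^2$. Similarly $k_1(r\cos t,r\sin t)=r\tilde{k}_1(t)+O(1)$ and $k_2(r\cos t,r\sin t)=r\tilde{k}_2(t)+O(1)$ for the explicit trigonometric functions $\tilde{k}_1(t)=\cos t(r_2/2+\rho)+\sin t/2$ and $\tilde{k}_2(t)=\sin t(r_1/2+\rho)+\cos t/2$.

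Next, for $t\in(0,\pi/2)$ both $r\cos t$ and $r\sin t$ tend to $+\infty$, so Lemma~\ref{lem:asymptpsi1infty} yields
\[
\psi_1(r\cos t)\sim C\,(\cos t)^{-\alpha-1}\,r^{-\alpha-1},\qquad \psi_2(r\sin t)\sim C\,(\sin t)^{-\alpha-1}\,r^{-\alpha-1}.
\]
Substituting into the functional equation, the numerator is of exact order $r\cdot r^{-\alpha-1}=r^{-\alpha}$ and the denominator of order $r^2$, so $\psi(r\cos t,r\sin t)\sim C_t\,r^{-\alpha-2}$, with an explicit expression for $C_t$ as a trigonometric function of $t$ divided by $1+\rho\sin 2t$.

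Two points will require extra care. First, one must verify $C_t>0$: since $\psi(r\cos t,r\sin t)$ is the Laplace transform of a non-negative function evaluated at positive real arguments, it is non-negative, and a hypothetical cancellation of the leading terms in the numerator would force $\psi$ to decay strictly faster than $r^{-\alpha-2}$ along the ray, which is incompatible with the sharp rate given by Lemma~\ref{lem:asymptpsi1infty} on the axes — a symmetry/consistency argument that I expect to be the main obstacle, and probably the place where one must invoke the positivity of the asymptotic constants of $\psi_1,\psi_2$ rather than arguing purely by sign of $\tilde{k}_i(t)$. Second, the endpoints $t=0,\pi/2$ are delicate because $\psi(r,0)$ and $\psi(0,r)$ are not finite (the remaining one-dimensional integral of $\mathbb{P}_{(u,v)}[T=\infty]$ diverges), so the stated asymptotic should be understood on the open arc $(0,\pi/2)$, with the endpoint asymptotics replaced by the one-dimensional statement of Lemma~\ref{lem:asymptpsi1infty}.
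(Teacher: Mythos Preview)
Your approach is exactly the paper's: the paper's entire proof reads ``The result is immediate from the functional equation~\eqref{eq:functionalequation} and Lemma~\ref{lem:asymptpsi1infty},'' and you have simply unpacked that sentence by substituting $(r\cos t,r\sin t)$ into $\psi=(k_1\psi_1+k_2\psi_2)/K$ and tracking orders. Your caveats about possible cancellation in $C_t$ and about the endpoints $t\in\{0,\pi/2\}$ (where $\psi$ is indeed not finite) are legitimate observations that the paper does not address; the paper evidently intends the statement on the open arc and takes strict positivity of $C_t$ for granted.
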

\begin{proof}
The result is immediate from the functional equation~\eqref{eq:functionalequation} and Lemma~\ref{lem:asymptpsi1infty}.
\end{proof}

\begin{proposition}[Asymptotics at the origin]
For positive constants $c_0$ and $c_1$ we have the following asymptotics
$$
\mathbb{P}_{(u,0)}[T=\infty] \underset{u\to 0}{\sim} c_0 u^\alpha
\quad \text{and} \quad
\mathbb{P}_{(0,v)}[T=\infty] \underset{v\to 0}{\sim} c_1 v^\alpha.
$$
\label{prop:asymptaxeorigin}
\end{proposition}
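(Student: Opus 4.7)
The objective is to transfer the asymptotic $\psi_1(x)\sim C x^{-\alpha-1}$ as $x\to\infty$ established in Lemma~\ref{lem:asymptpsi1infty} into an asymptotic near the origin for $f(u):=\mathbb{P}_{(u,0)}[T=\infty]$. The natural tool is a Karamata-type Tauberian theorem linking the behavior of a Laplace transform at $+\infty$ to the behavior of the underlying function at $0^+$: for $f:(0,\infty)\to[0,\infty)$ nondecreasing and $\alpha>-1$, one has
\begin{equation*}
\int_0^\infty e^{-xu} f(u)\,\mathrm{d}u \underset{x\to\infty}{\sim} C\, x^{-\alpha-1}
\quad\Longleftrightarrow\quad
f(u)\underset{u\to 0^+}{\sim}\frac{C}{\Gamma(\alpha+1)}\,u^{\alpha}.
\end{equation*}
The Abelian direction is immediate from dominated convergence via the change of variables $v=xu$ in $x\psi_1(x)=\int_0^\infty e^{-v}f(v/x)\,\mathrm{d}v$; the Tauberian direction is precisely the content of Karamata's theorem under monotonicity.

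Before invoking this equivalence one must check the needed regularity of $f$. Theorem~\ref{thm:lim0} gives $f(0^+)=0$, Theorem~\ref{thm:uvtendtoinfty} gives $f(\infty)=1$, and $f$ is continuous on $(0,\infty)$ by continuous dependence of the reflected process on its initial condition (cf.~\cite{andres_pathwise_2009}). The substantive property needed is monotonicity of $u\mapsto f(u)$. I would prove it by a synchronous coupling of~\eqref{ourprocess} from two starting points $(u_1,0)$ and $(u_2,0)$ with $u_1<u_2$ driven by the same Brownian motion $(W_1,W_2)$, and argue that the order $Z^{(u_1)}(t)\preceq Z^{(u_2)}(t)$ (coordinatewise) is preserved up to the first absorption time. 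The inclusion $\{T^{(u_1)}=\infty\}\subseteq\{T^{(u_2)}=\infty\}$ then follows, giving monotonicity of $f$. Karamata's theorem then immediately yields $\mathbb{P}_{(u,0)}[T=\infty]\sim c_0 u^{\alpha}$ with $c_0=C/\Gamma(\alpha+1)$. The companion statement for $\mathbb{P}_{(0,v)}[T=\infty]$ follows verbatim from the symmetric asymptotic for $\psi_2$ in Lemma~\ref{lem:asymptpsi1infty}.

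The main obstacle is making the monotone coupling rigorous, because the local times $l_1,l_2$ in~\eqref{ourprocess} reflect along skew directions that need not preserve the coordinatewise order between two coupled copies. If the direct synchronous coupling is only partial, a robust workaround is to skip the Tauberian step entirely and perform Laplace inversion from the explicit formula of Theorem~\ref{thm:explicitF1Brown}: the factor $\psi_1(x)\sim C x^{-\alpha-1}$ reflects a purely algebraic singularity at infinity, so a Bromwich contour deformation, or equivalently a transfer-theorem argument in the spirit of Flajolet–Sedgewick singularity analysis, converts this singular behavior directly into the near-zero asymptotic $f(u)\sim (C/\Gamma(\alpha+1))\,u^{\alpha}$. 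Either route produces the claimed result with an identifiable positive constant.
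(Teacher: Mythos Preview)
Your fallback route is exactly what the paper does, and the paper never attempts the Karamata/monotonicity route you lead with. Concretely, the paper writes $\psi_1(x)=(C+\eta(x))x^{-\alpha-1}$ with $\eta(x)\to 0$, inverts via the Bromwich integral
\[
g(u)=\frac{1}{2\pi i}\int_{A-i\infty}^{A+i\infty}e^{ux}\psi_1(x)\,\mathrm{d}x,
\]
substitutes $s=ux$, peels off the main term $C u^{\alpha}/\Gamma(\alpha+1)$, and shows the remainder $\frac{1}{2\pi i}\int_{Au-i\infty}^{Au+i\infty}e^{s}\eta(s/u)s^{-\alpha-1}\,\mathrm{d}s$ tends to $0$ by choosing $A=1/u$ and using that $|\eta|<\varepsilon$ outside a compact set (the reference integral $\int_{1-i\infty}^{1+i\infty}s^{-\alpha-1}\,\mathrm{d}s$ converges since $\alpha\ge 1$). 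No monotonicity of $u\mapsto \mathbb{P}_{(u,0)}[T=\infty]$ is needed or invoked.

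Your Karamata approach is a genuine alternative and would also work, but it rests on the monotone-density step, and your synchronous-coupling sketch for monotonicity is, as you yourself flag, fragile: once the lower trajectory touches the vertical axis and starts accruing $l_1$, the term $-r_1\,l_1$ in the second coordinate can push $Z_2^{(u_1)}$ strictly below $Z_2^{(u_2)}$, after which the horizontal local times decouple and the coordinatewise order is no longer obviously preserved. Monotonicity is in fact true---it follows cleanly from Harrison's dual representation $\mathbb{P}_{(u,0)}[T=\infty]=\pi(\mathcal{S}(u,0))$ recalled in Section~\ref{subsec:dualprocess}, since the trapezoids $\mathcal{S}(u,0)$ are nested in $u$---but the Bromwich argument in the paper sidesteps the issue entirely and needs only the asymptotic of $\psi_1$ from Lemma~\ref{lem:asymptpsi1infty}.
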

\begin{proof}
The result follows by combining the asymptotic results of $\psi_1$ and $\psi_2$ at infinity that we computed in Lemma~\ref{lem:asymptpsi1infty} with the reciprocal of the result in \cite[Thm 33.3]{doetsch_introduction_1974}\footnote{\citet[Thm 33.3]{doetsch_introduction_1974} establishes that if for some constant $a$ a function is equivalent to $u^a$ at $0$, then at infinity, its Laplace transform is equivalent (up to a multiplicative constant) to $x^{-a-1}$.}. We begin by denoting $g(u):=\mathbb{P}_{(u,0)}[T=\infty]$. Then, by definition,  $\psi_1(x)=\int_0^\infty e^{-xu}g(u)\mathrm{d}u$. As $\psi_1(x)$ has no singularities greater than $0$, for every $A>0$, the inverse Laplace transform gives
$$
g(u)=\frac{1}{2i\pi}\int_{A-i\infty}^{A+i\infty} e^{ux}\psi_1(x)\mathrm{d}x.
$$
By Lemma~\ref{lem:asymptpsi1infty}, we have $\psi_1(x)=\frac{C+\eta(x)}{x^{\alpha+1}}$, where $\eta$ is a function such that $\lim_\infty \eta =0$. Recalling that the Laplace transform of $u^\alpha/\Gamma(\alpha+1) $ is $x^{-\alpha-1}$ and performing a change of variables $s=ux$, we obtain
\begin{align*}
g(u) &= \frac{1}{2i\pi}\int_{A-i\infty}^{A+i\infty} e^{ux}\frac{C+\eta(x)}{x^{\alpha+1}}\mathrm{d}x
\\ &=
 u^\alpha\left( \frac{C}{\Gamma(\alpha+1)}+\frac{1}{2i\pi} \int_{Au-i\infty}^{Au+i\infty} e^{s}\frac{\eta(s/u)}{s^{\alpha+1}}\mathrm{d}s \right).
\end{align*}
It remains to show that the last integral tends to $0$ when $u\to 0$. To do so, consider $\epsilon>0$ arbitrarily small. Then there exists $B>0$ sufficiently large such that $\eta(x) < \epsilon$ for all $|x|>B$. For all $u$ such that $u<1/B$, let us consider $A:=1/u$. This gives 
$$
\left| 
\frac{1}{2i\pi} \int_{Au-i\infty}^{Au+i\infty} e^{s}\frac{\eta(s/u)}{s^{\alpha+1}}\mathrm{d}s
\right|
<
 \frac{\epsilon}{2i\pi} \int_{1-i\infty}^{1+i\infty} \frac{1}{s^{\alpha+1}}\mathrm{d}s,
$$
where the last integral converges for $\alpha\geqslant 1$. The proof concludes by letting $\epsilon$ tend towards $0$.
\end{proof}

\begin{theorem}[Asymptotics at the origin]
For $t\in(0,\frac{\pi}{2})$ and some positive constant $c_t$ we have
$$
\mathbb{P}_{(r \cos t,r \sin t)}[T=\infty] \underset{r\to 0}{\sim}  c_t r^\alpha.
$$
\end{theorem}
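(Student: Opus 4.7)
The plan is to mimic the one-dimensional inverse Laplace argument used in Proposition~\ref{prop:asymptaxeorigin}, but now applied to the bivariate Laplace transform $\psi(x,y)$ of $g(u,v):=\mathbb{P}_{(u,v)}[T=\infty]$, using the exponent $-\alpha-2$ supplied by Lemma~\ref{lem:asymppsiinfty}. Starting from the two-dimensional Bromwich inversion
$$
g(u,v) \;=\; \frac{1}{(2\pi i)^2}\int_{A-i\infty}^{A+i\infty}\!\int_{A-i\infty}^{A+i\infty} e^{xu+yv}\,\psi(x,y)\,\D x\,\D y,
$$
valid for every $A>0$, I would set $(u,v)=(r\cos t,r\sin t)$ and perform the scaling substitution $x\mapsto x/r$, $y\mapsto y/r$ with the contours shifted to $A=1/r$. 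This yields
$$
g(r\cos t,r\sin t)\;=\;\frac{1}{r^{2}(2\pi i)^2}\int\!\!\int e^{x\cos t+y\sin t}\,\psi(x/r,y/r)\,\D x\,\D y,
$$
so that the prefactor $r^{-2}$ combined with the asymptotic $\psi(x/r,y/r)\sim r^{\alpha+2}F(x,y)$ (where $F$ is a homogeneous function of degree $-\alpha-2$ encoding the angular profile $C_\phi$ from Lemma~\ref{lem:asymppsiinfty}) produces the claimed order $r^{\alpha}$.

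To turn this heuristic into a proof, the first task is to upgrade Lemma~\ref{lem:asymppsiinfty} — which concerns real positive $(x,y)$ — to an estimate uniform on the complex Bromwich contours. This would be carried out by combining the functional equation~\eqref{eq:functionalequation} with the explicit integral representation for $\psi_1$ of Theorem~\ref{thm:explicitF1Brown} and its symmetric counterpart for $\psi_2$. Using these formulas one extracts the scaling $\psi_i(x/r)\sim r^{\alpha+1}\tilde F_i(x)$ and establishes a uniform domination of the form $|\psi(x/r,y/r)|\les r^{\alpha+2}(1+|x|+|y|)^{-(\alpha+2)}$ along the shifted vertical lines. Dominated convergence then applies (integrability at infinity is secured by the exponent $\alpha\ges 1$), yielding
$$
c_t \;=\; \frac{1}{(2\pi i)^2}\int_{1-i\infty}^{1+i\infty}\!\int_{1-i\infty}^{1+i\infty} e^{x\cos t+y\sin t}\,F(x,y)\,\D x\,\D y,
$$
and the equivalence $\mathbb{P}_{(r\cos t,r\sin t)}[T=\infty]\sim c_t\, r^\alpha$.

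The main obstacle is precisely this uniform control of $\psi$ on the complex contours: the estimate of Lemma~\ref{lem:asymppsiinfty} was established only along real rays and must be analytically continued via the integral formula of Theorem~\ref{thm:explicitF1Brown}, where the behavior of the Cauchy-type integral and of the conformal map $w$ at infinity enter in a nontrivial way. Once this uniformity is in hand, positivity of $c_t$ follows from the strict positivity of $g$ in the interior of the quadrant (a consequence of Theorem~\ref{thm:Ztoinfty}), together with the positivity of the axial constants $c_0,c_1$ of Proposition~\ref{prop:asymptaxeorigin} and continuity of $t\mapsto c_t$. An alternative, more probabilistic route would bypass the Laplace inversion: using the PDE of Proposition~\ref{prop:PDE}, the drift terms $\mu_1\partial_{z_1}g,\mu_2\partial_{z_2}g$ are sub-dominant near the origin relative to the second-order part, so $g$ should coincide to leading order with the self-similar solution of degree $\alpha$ of the driftless problem treated by Varadhan--Williams in the wedge of angle $\beta$ with reflection angles $\delta,\epsilon$, which is the geometric origin of the exponent $\alpha=(\delta+\epsilon-\pi)/\beta$.
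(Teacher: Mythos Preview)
Your approach is essentially the same as the paper's: its proof is a one-paragraph sketch saying to repeat the Bromwich-inversion argument of Proposition~\ref{prop:asymptaxeorigin} for the bivariate transform $\psi$, using the exponent $-\alpha-2$ from Lemma~\ref{lem:asymppsiinfty}, ``with a polar coordinate transformation'' --- precisely the scaling $x\mapsto x/r$, $y\mapsto y/r$ you carry out. You are in fact more explicit than the paper and correctly flag the need for uniform control of $\psi$ along the complex contours (a point the paper subsumes under ``nearly identical calculations''); your remarks on the positivity of $c_t$ and the alternative PDE/Varadhan--Williams route are extras not present in the paper's argument.
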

\begin{proof}
This proof follows directly from the asymptotics of the double Laplace transform $\psi$ computed in Lemma~\ref{lem:asymppsiinfty}. Recall the result used in the proof of Proposition~\ref{prop:asymptaxeorigin} linking the asymptotics of a function at $0$ to the asymptotics of its Laplace transform at infinity. The only necessary modification is to apply this result with a polar coordinate transformation. The desired asymptotics then follows with nearly identical calculations.
\end{proof}

\subsection*{Acknowledgments}

We thank L.C.G. Rogers for many inspiring conversations and motivational ideas, without which this paper would not have been written. We thank Kilian Raschel for various and useful discussions on this issue. 

%
%
%

\newpage

\bibliographystyle{apalike}
\bibliography{biblio2v2}
\end{document}